\theoremstyle{theorem}
\newtheorem{thm}{Theorem}[section]
\newtheorem*{thm1}{Theorem}
\newtheorem{prop}[thm]{Proposition}
\newtheorem{lem}[thm]{Lemma}
\newtheorem{cor}[thm]{Corollary}
\newtheorem*{cor1}{Corollary}
\newtheorem*{prop1}{Proposition}
\theoremstyle{definition}
\newtheorem{defn}[thm]{Definition}
\newtheorem{exap}[thm]{Example}
\newtheorem{rem}[thm]{Remark}
\numberwithin{equation}{section}
\newtheorem{quest}{Question}
\newcommand\Sym{{\rm {Sym}}}
\newcommand\mult{\widehat{{h}}^1_x}
\newcommand\vol{{\rm {vol}}}
\newcommand\Vol{{\rm {vol}_{{BdFF}}}}
\newcommand\Env{{\rm {Env}}}
\newcommand\hl{{{h^1_{x}}}}
\newcommand\Spec{{\rm{Spec}}}
\newcommand\Hom{{\rm{Hom}}}
\date{}
\title{Local volumes on normal algebraic varieties}
\author{Mihai Fulger}
\begin{document}
\maketitle
\tableofcontents

\section*{Introduction}
\addcontentsline{toc}{section}{Introduction}
In this paper we study a notion of volume for Cartier divisors on arbitrary blow-ups of normal complex algebraic varieties of dimension greater than one, with a distinguished point. We apply this to study a volume for normal isolated singularities, generalizing \cite{JW1}. We also compare this volume of isolated singularities to a different generalization by \cite{BdFF}.

\par Plurigenera of smooth complex projective varieties have been the object of much research in complex birational geometry. More recent, local analogues have been studied in \cite{W}, \cite{Y}, \cite{I2} and \cite{M} as invariants of isolated singularities appearing on normal complex algebraic varieties. For a normal complex isolated algebraic singularity $(X,x)$ of dimension $n$ at least two, the plurigenera of $(X,x)$ in the sense of Morales (\cite{M}) are defined as the dimensions of skyscraper sheaves:
$$\lambda_m(X,x)=_{\rm def}\dim\frac{\mathcal O_X(mK_X)}{\pi_*\mathcal O_Y(mK_{\widetilde X}+mE)},$$ where $\pi:(\widetilde X,E)\to(X,x)$ is an arbitrary log-resolution. One sees that $\lambda_m(X,x)=0$ if $x$ is a smooth point of $X$ or, more generally, if $X$ is $\mathbb Q-$Gorenstein with log-canonical singularities. 

\par The growth rate of $\lambda_m(X,x)$ is studied in \cite{I2} and \cite{W}. It is shown that $\lambda_m(X,x)$ grows at most like $m^n$. A natural object to study is then the finite asymptotic limit
$$\vol(X,x)=_{\rm def}\limsup_{m\to\infty}\frac{\lambda_m(X,x)}{m^n/n!}$$ that we call \textit{the volume of the singularity} $(X,x)$. For surfaces, $\vol(X,x)$ has been studied in \cite{JW1} and shown to be a characteristic number of the link of the singularity. In particular, its behavior under pull-back by ramified maps was analyzed. The vanishing of $\vol(X,x)$ in the two dimensional case is also well understood.  We will see that many of its other properties generalize to higher dimension.   

\vskip .3cm
\par We introduce a local invariant that includes the volume of isolated singularities as a special case. Let $X$ be a normal algebraic variety of dimension at least two over an algebraically closed field of arbitrary characteristic and let $x$ be a point on $X$. Fixing a proper birational map $\pi:X'\to X$, 
for an arbitrary Cartier divisor $D$ on $X'$, define the \textit{local volume of $D$ at $x$} to be 
$$\vol_x(D)=_{\rm def}\limsup_{m\to\infty}\frac{\hl(mD)}{m^n/n!},$$ where
$$\hl(D)=_{\rm def}\dim H^1_{\{x\}}(X,\pi_*\mathcal O_{X'}(D)).$$

We show that $\vol_x(D)$ is finite. When $\pi:(\widetilde X,E)\to(X,x)$ is a log-resolution of a normal complex isolated singularity of dimension $n$, we will see that $$\vol(X,x)=\vol_x(K_{\widetilde X}+E).$$

Drawing parallels between the theory of local volumes and the theory of asymptotic cohomological functions on projective varieties as presented in \cite{K} or \cite[Ch.2.2.C]{L}, we prove:

\begin{thm1} Let $\pi:X'\to X$ be a proper birational map and let $x$ be a point on the normal algebraic variety of $X$ or dimension $n$ at least two. Then $\vol_x$ is well defined, $n-$homogeneous and continuous on $N^1(X'/X)_{\mathbb R}$.\end{thm1}

\noindent As usual, $N^1(X'/X)_{\mathbb R}$ denotes the additive group of $\mathbb R-$Cartier divisors on $X'$ modulo numerical equivalence on the fibers of $\pi$. A difference between $\vol_x$ and the volume of divisors on projective varieties is that whereas the latter increases in all effective directions, $\vol_x$ decreases in effective directions that contract to $x$ and increases in effective directions without components contracting to $x$. This behavior proves  quite useful. Following ideas in \cite{LM}, we present a convex geometry approach to local volumes that allows us to prove the following:
\begin{prop1}Let $\pi:X'\to X$ be a proper birational map and let $x$ be a point on the normal algebraic variety of $X$ or dimension $n$ at least two. For any Cartier divisor $D$ on $X'$, we can replace $\limsup$ in the definition of $\vol_x(D)$ by $\lim$:
$$\vol_x(D)=\lim_{m\to\infty}\frac{\hl(mD)}{m^n/n!}.$$\end{prop1}

In the style of \cite[Thm.3.8]{LM}, we obtain a Fujita approximation type result. If $\mathcal I$ is a fractional ideal sheaf on $X$, we define its local multiplicity at $x$ to be:
$$\mult(\mathcal I)=_{\rm def}\limsup_{m\to\infty}\frac{\dim H^1_{\{x\}}(\mathcal I^m)}{m^n/n!}.$$

\begin{thm1} Let $\pi:X'\to X$ be a proper birational morphism with $X$ normal algebraic of dimension at least two and let $x$ be a point on $X$. On $X'$, let $D$ be a Cartier divisor such that the graded family $\mathfrak a_p=\pi_*\mathcal O_{X'}(pD)$ is divisorial outside $x$. Then
$$\vol_x(D)=\lim_{p\to\infty}\frac{\mult(\pi_*\mathcal O_{X'}(pD))}{p^n}.$$\end{thm1}

Two other problems that are well understood in the projective case are the vanishing and log-concavity for volumes of Cartier divisors (see \cite[Ch.2.2.C]{L}). We know that volumes vanish outside the big cone and that $\vol^{1/n}$ is a concave function on the same big cone. In the local setting we find analogous results when working with divisors supported on the fiber over $x$. Denote by ${\rm Exc}_x(\pi)$ the real vector space spanned by all such divisors.
\begin{prop1}On $X'$, let $D$ be a Cartier divisor supported on the fiber over $x$. Then $\vol_x(D)=0$ if, and only if, $D$ is an effective divisor. When $D$ is an arbitrary Cartier divisor, then $\vol_x(D)=0$ if, and only if, $\hl(m\widetilde D)=0$ for all $m
\geq 0$, where $\widetilde D$ is the pullback of $D$ to the normalization of $X'$.\end{prop1}
\begin{prop1}The function $\vol_x$ is log-convex on ${\rm Exc}_x(\pi)$, but it may fail to be so on $N^1(X'/X)_{\mathbb R}$.\end{prop1}

Returning to the setting of normal complex isolated singularities, we generalize to higher dimension some of the properties established in \cite{JW1} for local volumes of isolated surface singularities. Unlike the two dimensional case, we show in Example \ref{nottop} that $\vol(X,x)$ is not a topological invariant of the link of the singularity in dimension at least three.

\begin{prop1} Let $f:(X,x)\to (Y,y)$ be a finite map of complex normal isolated singularities of dimension $n$ with $f(x)=y$. Then
$$\vol(X,x)\geq (\deg f)\cdot\vol(Y,y).$$ Equality holds if $f$ is unramified outside $y$.
\end{prop1}

\begin{cor1}
\begin{enumerate}
\item If $f:(X,x)\to(Y,y)$ is a finite map of normal isolated singularities and $\vol(X,x)$ vanishes, then $\vol(Y,y)=0$.
\item If $(X,x)$ admits an endomorphism of degree at least two, then $\vol(X,x)=0$.
\end{enumerate}
\end{cor1}

\par For surfaces, the vanishing of $\vol(X,x)$ is equivalent to $(X,x)$ being log-canonical  in the sense of \cite[Rem.2.4]{JW1}. In arbitrary dimension, as a corollary to \cite[Thm.4.2]{I2}, we show:

\begin{prop1} If $(X,x)$ is a normal isolated singularity of dimension $n$, then $\vol(X,x)=0$ if, and only if, $\lambda_m(X,x)=0$ for all $m\geq 0$.\end{prop1} 
\noindent In the $\mathbb Q-$Gorenstein case, the conclusion of the previous result is the same as saying that $(X,x)$ has log-canonical singularities, but by \cite{BdFF} this is not the case in general. We may construct another notion of volume that is useful for the study of canonical singularities in the sense of \cite{dFH}:
$$\vol_{\gamma}(X,x)=_{\rm def}\vol_x(K_{\widetilde X}),$$ where $\pi:\widetilde X \to X$ is a resolution of a normal isolated singularity $(X,x)$. We will see that $\vol_{\gamma}(X,x)$ is also independent of the resolution.
\begin{prop1}If $(X,x)$ is a normal complex isolated singularity, then $\vol_{\gamma}(X,x)=0$ if, and only if, $(X,x)$ has canonical singularities in the sense of \cite{dFH}.\end{prop1}

\par On surfaces, we mention that by \cite{JW1}, the volume $\vol(X,x)$ can be computed as $-P\cdot P$ where $P$ is the nef part of the relative Zariski decomposition of $K_{\widetilde X}+E$  for any good resolution $\pi:(\widetilde X,E)\to (X,x)$. Building on the theory of $b-$divisors, this definition is generalized to higher dimension in \cite{BdFF} to produce another notion of volume for a normal isolated singularity, denoted $\Vol (X,x)$. We are able to show
$$\Vol(X,x)\geq\vol(X,x).$$ By the same \cite{BdFF}, the two notions of volume differ in general, but coincide in the $\mathbb Q-$Gorenstein case and we are able to slightly extend this to the numerically Gorenstein case (cf. \cite{BdFF}). The volume $\Vol(X,x)$ enjoys similar properties to those of $\vol(X,x)$ concerning the behavior with respect to finite covers and is better suited for the study of log-canonical singularities. On the other hand, $\Vol(X,x)$ is usually hard to compute because all birational models of $X$ may influence it as opposed to $\vol(X,x)$, which is computed on any log-resolution of $(X,x)$.

\par For illustration, consider the case of cone singularities. Let $(V,H)$ be a non-singular polarized complex projective variety of dimension $n$ and let  $X$ be the cone $\Spec\bigoplus_{m\geq 0}H^0(V,\mathcal O(mH))$ whose vertex $0$ is an isolated singularity. By explicit computation, or by \cite[Thm.1.7]{W},
$$\lambda_m(X,0)=\sum_{k\geq 1}\dim H^0(V,\mathcal O(mK_V-kH)).$$ We will see that this leads to, $$\vol(X,0)=(n+1)\cdot \int_0^{\infty}\vol(K_V-tH)dt.$$ The volume under the integral is the volume of line bundles on projective varieties in the sense of \cite[Ch.2.2.C]{L}. All isolated surface singularities have rational volume, but cone singularities provide examples of isolated singularities with irrational volume $\vol(X,x)$ already in dimension three. As we will see, combining techniques in \cite{BdFF} with results in our study of $\vol_x$, the volume $\Vol(X,x)$ can also be computed for some cone singularities and it can also achieve irrational values. 

\vskip .3cm

The paper is organized as follows. After the introduction and setting notation and conventions, section one develops the theory of local volumes and we compute several examples in the first subsection, before presenting a convex geometry approach to local volumes and proving our version of the Fujita approximation theorem. We next investigate the vanishing and log-convexity for $\vol_x$. Section two is dedicated to the volume of isolated singularities associated to the plurigenera in the sense of Watanabe or Morales and to $\vol_{\gamma}(X,x)$, an asymptotic invariant associated to Kn\"oller's plurigenera. We generalize to higher dimension results for surfaces in \cite{JW1}, translate to volumes some of the results of Ishii (\cite{I2}) and give examples. In section three, we compare our notion of volume with the one appearing in \cite{BdFF}. By studying the impact that the theory of $\vol_x$ has on $\Vol(X,x)$, we are able to give a non-trivial computation for $\Vol(X,x)$ that yields an irrational result. We end with a list of open questions in section four.

\vskip .3cm
\noindent\textbf{Acknowledgments.} The author would like to express his gratitude to his adviser, Robert Lazarsfeld for suggesting this direction of research as well as for sharing his intuition and invaluable advice. We also thank the authors of \cite{BdFF} for sharing preliminary versions of their work and address special thanks to Tommaso de Fernex for his many illuminating comments and suggestions. The author is also grateful to Bhargav Bhatt, Shihoko Ishii, Mircea Musta\c t\u a, Claudiu Raicu, Stefano Urbinati and Jonathan Wahl for useful discussions as well as to Mel Hochster for a beautiful course on the topic of local cohomology.

\section*{Notation and conventions}
\addcontentsline{toc}{section}{Notation and conventions}
Unless otherwise stated, we work over the field of complex numbers $\mathbb C$ and use the notation of \cite{L}.
For a Cartier divisor $D$ on a projective variety $X$ of dimension $n$, we consider the asymptotic cohomology functions of \cite{K}:
$$\widehat{h}^i(D)=_{\rm def}\limsup_{m\to\infty}\frac{h^i(X,\mathcal O(mD))}{m^n/n!}.$$ When $i=0$, we recover the volume function $\vol(D)$ from \cite[Ch.2.2.C]{L}.

\begin{paragraph}{The relative setting.} Let $\pi:Y\to X$ be a projective morphism of quasi-projective varieties. A Cartier divisor $D$ on $Y$ is $\pi-$\textit{trivial} if $D=\pi^*L$ for some Cartier divisor $L$ on $X$. Two Cartier divisors $D$ and $D'$ are $\pi-$\textit{linearly equivalent} if $D$ is linearly equivalent to $D'+\pi^*L$ for some Cartier divisor $L$ on $X$. A Cartier divisor $D$ on $Y$ is $\pi-$\textit{numerically trivial} if its restriction to fibers of $\pi$ is numerically trivial. The set of $\pi-$numerical equivalence classes is an abelian group of finite rank denoted $N^1(Y/X)$. A divisor $D$ is $\pi-$ample (nef) if the restriction to each fiber of $\pi$ is ample (nef). $D$ is $\pi-$movable if its $\pi-$base locus has codimension at least two in $Y$.  
\end{paragraph} 

\begin{paragraph}{Cohomology with supports.} We point to \cite{G} for an elaborate study of cohomology with supports, or \cite[Exer.III.2.3]{H} for a quick introduction that is sufficient for our purposes.
\end{paragraph}

\begin{paragraph}{Resolutions of singularities.} In a log-resolution $\pi:(\widetilde X,E)\to(X,x)$, we denote by $E$ the reduced fiber over $x$. The divisor $E$ has simple normal crossings. We say $\pi$ is a \textit{good resolution} if it is an isomorphism outside $x$.
\end{paragraph}

\begin{paragraph}{Coherent fractional ideal sheaves} A coherent subsheaf $\mathcal I$ of the constant fraction field sheaf of an integral scheme of finite type over an algebraically closed field is called a coherent fractional ideal sheaf. Typical examples are constructed by pushing forward invertible sheaves via projective birational fiber space maps. For $\mathcal I$ a coherent fractional ideal sheaf, there exists a Cartier divisor $D$ on $X$ such that $\mathcal I\cdot\mathcal O_X(D)$ is an actual ideal sheaf. Using this, the blow-up of $\mathcal I$ can be defined.\end{paragraph}

\section{Local volumes}
This section is devoted to building the theory of local volumes for Cartier divisors on a relatively projective birational modification of a normal complex quasi-projective variety of dimension at least two with a distinguished point. We compare many properties of these volumes to their counterpart in the theory of volumes of Cartier divisors on projective varieties as presented in \cite[Ch.2.2.C]{L}. In the first subsection we define the local volumes, study them variationally, discus their behavior under finite maps and give examples. In the second subsection we adapt some of the methods of \cite{LM} to present a convex body approach to local volumes and obtain a Fujita approximation result. We discuss log-convexity and vanishing properties for local volumes in our third subsection.

\subsection{Basic properties}

\par Let $X$ be a normal complex quasi-projective variety of dimension $n$ at least two over the field of complex numbers $\mathbb C$ and fix a point $x\in X$. Let $\pi:X'\to X$ be a projective birational morphism and let $D$ be a Cartier divisor on $X'$. Using cohomology with supports at $x$, define 
\begin{equation}\label{def:hl}\hl(D)=_{\rm def}\dim H^1_{\{x\}}(X,\pi_*\mathcal O_{X'}(D)).\end{equation}
We will see in the course of the proof of Proposition \ref{volfin} that this is a finite number.
\begin{rem}\label{r1}
\begin{enumerate}
\item If $U$ is an open subset of $X$ containing $x$, let $F$ be the set theoretic fiber over $x$, let $V$ be the pre-image of $U$ and denote by $i:U\setminus\{x\}\to U$  and $j:V\setminus F\to V$ the natural open embeddings. By abuse, we denote $\pi|_U^V$ again by $\pi$. An inspection of the restriction sequence for cohomology with supports, together with flat base change, reveal $$\hl(D)=\dim\frac{i_*i^*(\pi_*\mathcal O_{X'}(D)|_U)}{\pi_*\mathcal O_{X'}(D)|_U}=\dim\frac{\pi_*j_*j^*(\mathcal O_{X'}(D)|_V)}{\pi_*\mathcal O_{X'}(D)|_U}.$$
\item If $U$ is affine, $X'$ is normal and $E$ is the divisorial component of the support of the fiber, then $$\hl(D)=\dim\frac{\bigcup_{k\geq 0}H^0(\pi^{-1}U,\mathcal O_{X'}(D+kE))}{H^0(\pi^{-1}U,\mathcal O_{X'}(D))}$$
as a study of local sections shows.
\end{enumerate}
\end{rem}
\noindent  

\begin{defn} The \textit{local volume} of $D$ at $x$ is the asymptotic limit:
$$\vol_x(D)=_{\rm def}\limsup_{m\to\infty}\frac{\hl(mD)}{m^n/n!}.$$ 
\end{defn}

\noindent We will prove that this quantity is finite in Proposition \ref{volfin}. We will also see in Corollary \ref{truelim} that the $\limsup$ in the definition of $\vol_x(D)$ can be replaced by $\lim$. The excision property of cohomology with supports shows that $\vol_x$ is local around $x$. The term volume is justified by the resemblance of the definition to that of volumes of divisors on projective varieties. We shall see that the two notions share many similar properties.  

\begin{exap}[Toric varieties]\label{toric} We use the notation of \cite{F}. Let $\sigma$ be an $n-$dimensional pointed rational cone in $N_{\mathbb R}$, where $N$ is a lattice isomorphic to $\mathbb Z^n$. Denote $M=\Hom(N,\mathbb Z)$ and let $S_{\sigma}$ be the semigroup $\sigma^{\vee}\cap M$. Let $X(\sigma)$ be the affine toric variety $\Spec\mathbb C[S_{\sigma}]$. The unique torus invariant point of $X(\sigma)$ is denoted $x_{\sigma}$.
\par Let $\Sigma$ be a rational fan obtained by refining $\sigma$. It determines a proper birational toric modification $\pi:X(\Sigma)\to X(\sigma)$. Let $v_1,\ldots,v_r$ be the first non-zero integer coordinate points on the rays that span $\sigma$. Let $v_{r+1},\ldots,v_{r+s}$ be the first non-zero points of $N$ on the rays in $\Sigma$ that lie in the relative interior of faces of $\sigma$ of dimension $2\leq d\leq n-1$  and denote by $v_{r+s+1},\ldots,v_{r+s+t}$ the first non-zero points from $N$ on the rays of $\Sigma$ in the interior of $\sigma$. Denote by $D_i$ the Weil divisor on $X(\Sigma)$ associated to the ray containing $v_i$. A divisor $D_i$ lies over $x_{\sigma}$ exactly when its support is a complete variety, which is equivalent to $v_i$ lying in the interior of $\sigma$, i.e., when $i\geq r+s$.
\par To $D=\sum_{i=1}^{r+s+t}a_iD_i$, a $T-$invariant Cartier divisor on $X(\Sigma)$, we associate the rational convex polyhedra in $M_{\mathbb R}$ defined by
$$P_D=\{u\in M_{\mathbb R}:\ \langle u,v_i\rangle\geq -a_i\mbox{ for all }i\}.$$
$$P'_D=\{u\in M_{\mathbb R}:\ \langle u,v_i\rangle\geq -a_i\mbox{ for all }i\leq r+s\}.$$
By \cite[Lem.pag.66]{F}, global sections of $\mathcal O_{X(\Sigma)}(mD)$ correspond to points of $(mP_D)\cap M$ and sections defined outside the fiber over $x_{\sigma}$ correspond to $(mP'_D)\cap M$. By Remark \ref{r1},
$$h^1_{x_{\sigma}}(mD)=\#((mP'_D\setminus mP_D)\cap M).$$
Taking asymptotic limits, $$\vol_{x_{\sigma}}(D)=n!\cdot\vol(P'_D\setminus P_D).$$ On the right hand side we have the Euclidean volume in $M_{\mathbb R}$. Note that this volume is rational and finite, even though $P_D$ and $P'_D$ may be infinite polyhedra. See Example \ref{tnc} and Figure \ref{fig:toric} for an explicit computation.
\end{exap} 

The surface case, which was studied in \cite{JW1} and served as the inspiration for our work, gives another set of computable examples. 

\begin{exap}[Surface case]\label{sf1} Let $(X,x)$ be a normal isolated surface singularity and let $\pi:\widetilde X\to X$ be a good resolution. Any divisor $D$ on $\widetilde X$ admits a relative Zariski decomposition $D=P+N$ where $P$ is a relatively nef and exceptional 
$\mathbb Q-$divisor. See \cite[Section 1]{JW1} for more on relative Zariski decompositions. From \cite[Thm.1.6]{JW1}, we have 
$$\vol_x(D)=-P\cdot P$$ and this can be computed algorithmically from the dual graph of $\pi$. 
\end{exap} 

\begin{lem}\label{proj} As before, let $X$ be a normal quasiprojective variety of dimension at least two, let $x$ be a point on $X$ and let $\pi:X'\to X$ be a relatively projective birational map. Then there exist projective completions $\overline X$ and $\overline {X'}$ of $X$ and $X'$ respectively, together with a map $\overline\pi:\overline {X'}\to\overline X$ extending $\pi$ and a Cartier divisor $\overline D$ on $\overline{X'}$ such that $\overline D|_{X'}=D$.\end{lem}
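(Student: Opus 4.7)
The plan is to first build projective compactifications that extend $\pi$, and then modify the compactification of $X'$ by blow-ups centered outside $X'$ so that the given Cartier divisor $D$ on $X'$ extends to a Cartier divisor.

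First, since $X$ is quasiprojective, I fix an embedding $X\hookrightarrow\bP^N$ and let $\overline X$ be its scheme-theoretic closure; this is projective. Because $X$ carries an ample line bundle, the relatively projective morphism $\pi$ factors as a closed immersion $X'\hookrightarrow\bP^n_X$ over $X$ for some $n$. I let $\overline{X'}_0$ be the scheme-theoretic closure of $X'$ inside $\bP^n_{\overline X}=\bP^n\times\overline X$, and $\overline\pi_0:\overline{X'}_0\to\overline X$ be the restriction of the projection. Then $\overline{X'}_0$ is projective, the map $\overline\pi_0$ is projective and extends $\pi$, and $X'$ sits inside $\overline{X'}_0$ as an open subscheme.

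Next, I extend $D$. Writing $D=D_+-D_-$ as a difference of effective Cartier divisors on $X'$, let $Z_\pm$ denote the scheme-theoretic closures of $D_\pm$ in $\overline{X'}_0$, with coherent ideal sheaves $\mathcal I_\pm\subset\mathcal O_{\overline{X'}_0}$. I take $\overline{X'}$ to be obtained by first blowing up $\mathcal I_+$ in $\overline{X'}_0$ and then, on the resulting scheme, blowing up the pullback of $\mathcal I_-$. Since $\mathcal I_\pm|_{X'}=\mathcal O_{X'}(-D_\pm)$ are already invertible, both blow-ups restrict to isomorphisms over $X'$, so $X'$ remains an open subscheme of $\overline{X'}$ and the composition $\overline\pi:\overline{X'}\to\overline{X'}_0\to\overline X$ is projective and extends $\pi$. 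On $\overline{X'}$ the total transforms of $\mathcal I_+$ and $\mathcal I_-$ are both invertible, so they cut out effective Cartier divisors $\overline{D_+},\overline{D_-}$; setting $\overline D:=\overline{D_+}-\overline{D_-}$ yields a Cartier divisor with $\overline D|_{X'}=D$ by construction.

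The main obstacle lies in the last step: the scheme-theoretic closure of a Cartier divisor on an open subscheme need not remain Cartier on the compactification, so one cannot simply take closures. This is precisely what the successive blow-ups address, as they forcibly principalize the relevant ideal sheaves while leaving everything unchanged over $X'$.
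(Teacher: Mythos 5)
Your argument is correct and follows essentially the same strategy as the paper: compactify, extend the morphism, and then blow up along suitable ideal sheaves (isomorphically over $X'$) so that the extension of $D$ becomes Cartier. The differences are only in implementation — the paper extends $\pi$ by resolving the indeterminacies of the induced rational map between arbitrary completions and extends $\mathcal O_{X'}(D)$ to a single coherent fractional ideal sheaf via \cite[Exer.II.5.15]{H} before performing one blow-up, whereas you take the closure of the graph in $\bP^n\times\overline X$ and first split $D=D_+-D_-$ into effective Cartier divisors; for the latter step you should note that obtaining this decomposition with actual equality (not merely linear equivalence) uses the quasi-projectivity, hence the existence of an ample line bundle, on the integral scheme $X'$.
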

\begin{proof} Choose arbitrary projective completions $\overline X$ and $Y$ of $X$ and $X'$ respectively. The rational map $\xymatrix{Y\ar@{.>}[r]&\overline X}$ induced by $\pi$ can be extended by resolving its indeterminacies in $Y$ to $\pi':Y'\to\overline X$ such that $\pi'|_{X'}=\pi$. The Cartier divisor $D$ determines an invertible sheaf $\mathcal O_{X'}(D)$, which by \cite[Exer.II.5.15]{H} extends to a coherent fractional ideal sheaf $\mathcal I$ on $Y'$ and if we denote by $\overline {X'}$ the blow-up of $Y'$ along $\mathcal I$, by $\overline\pi:\overline {X'}\to\overline X$ the induced map and by $\mathcal O_{\overline Y}(\overline D)$ the relative Serre bundle of the blow-up, one notices that $\overline D|_{X'}=D$.\end{proof}

The previous result can be used to reduce questions about the local volume of one divisor $D$ (or of finitely many) to the case when $X$ and $X'$ are projective. We will see that we can reduce the study of the function $\vol_x$ to $X'$ normal, or even non-singular.

\begin{lem}\label{l1} With notation as above, let $\mathcal F$ be a torsion free coherent sheaf on $X'$ of rank $r$. Then
$$\vol_x(D)=\limsup_{m\to\infty}\frac{\dim H^1_{\{x\}}(X,\pi_*(\mathcal F(mD)))}{r\cdot m^n/n!}.$$\end{lem}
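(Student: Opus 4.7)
The plan is to sandwich $\mathcal F$ between twists of $\mathcal O_{X'}^{\oplus r}$ and to show that torsion coherent sheaves on $X'$ contribute only lower-order terms to the local cohomology at $x$, so the leading asymptotics of $\dim H^1_{\{x\}}(\pi_*\mathcal F(mD))$ differ from $r\cdot\hl(mD)$ by $o(m^n)$.

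First I would use Lemma \ref{proj} to compactify and assume $X, X'$ are projective, making ample Cartier divisors available on $X'$. Since $\mathcal F$ is torsion-free of generic rank $r$, a trivialization on a dense open $U\subset X'$ produces $r$ generic sections of $\mathcal F$ and dually $r$ generic maps $\mathcal F\to\mathcal O_{X'}$. Clearing poles with a sufficiently positive Cartier divisor yields two short exact sequences
\begin{equation*}
0 \to \mathcal O_{X'}(-A)^{\oplus r} \to \mathcal F \to Q_1 \to 0, \qquad 0 \to \mathcal F \to \mathcal O_{X'}(B)^{\oplus r} \to Q_2 \to 0,
\end{equation*}
in which $A, B$ are effective Cartier divisors and $Q_1, Q_2$ have supports of dimension at most $n-1$. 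Twisting by $\mathcal O_{X'}(mD)$, pushing forward, and passing to the long exact sequences of $H^*_{\{x\}}(X,-)$ produces the sandwich
\begin{equation*}
r\cdot\hl(mD-A) - \alpha_m \;\leq\; \dim H^1_{\{x\}}\bigl(X, \pi_*\mathcal F(mD)\bigr) \;\leq\; r\cdot\hl(mD+B) + \beta_m,
\end{equation*}
where the error terms $\alpha_m, \beta_m$ are dimensions of $H^0_{\{x\}}$ of the images of $\pi_*\mathcal F(mD) \to \pi_*Q_1(mD)$ and $\pi_*\mathcal O_{X'}(mD+B)^{\oplus r} \to \pi_*Q_2(mD)$ respectively.

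Since each $\pi_* Q_j(mD)$ is supported on a closed subset of $X$ of dimension at most $n-1$, both $\alpha_m$ and $\beta_m$ are bounded by $h^0(X', Q_j(mD))$, which by a Hilbert-polynomial estimate on the $(n-1)$-dimensional support of $Q_j$ grows like $O(m^{n-1}) = o(m^n)$. Applying the same technique to the sequences $0 \to \mathcal O_{X'}(mD) \to \mathcal O_{X'}(mD+C) \to \mathcal O_C(mD+C) \to 0$ then gives the perturbation estimate $|\hl(mD\pm C) - \hl(mD)| = O(m^{n-1})$ for any fixed effective $C$; for the $H^1_{\{x\}}$ contribution appearing here I would use the excision exact sequence $H^0(X,-)\to H^0(X\setminus\{x\},-)\to H^1_{\{x\}}\to H^1(X,-)$ together with the fact that all four quantities are $O(m^{n-1})$ for sheaves supported in codimension one on the projective compactification. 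Dividing the sandwich by $r\, m^n/n!$ and passing to $\limsup$ then yields the asserted identity. The main obstacle is precisely this last polynomial growth bound, namely controlling the dimensions of local cohomology at $x$ of coherent sheaves on $X$ whose support is at most $(n-1)$-dimensional; everything else is bookkeeping with long exact sequences.
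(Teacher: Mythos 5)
Your sandwich itself is sound: both $\alpha_m$ and $\beta_m$ are dimensions of $H^0_{\{x\}}$ of coherent sheaves supported in dimension at most $n-1$, hence bounded by $h^0(X',Q_j(mD))=O(m^{n-1})$, which matches the paper. The gap is the final ``perturbation estimate'' $|\hl(mD\pm C)-\hl(mD)|=O(m^{n-1})$. The two directions you actually need, namely $\hl(mD+B)\leq\hl(mD)+o(m^n)$ and $\hl(mD)\leq\hl(mD-A)+o(m^n)$, are both of the form ``adding a fixed effective divisor increases $\hl$ by at most $o(m^n)$,'' and in the long exact sequence for $0\to\pi_*\mathcal O_{X'}(mD)\to\pi_*\mathcal O_{X'}(mD+C)\to\mathcal G_m\to 0$ this requires controlling the image of $H^1_{\{x\}}(\pi_*\mathcal O_{X'}(mD+C))$ in $H^1_{\{x\}}(\mathcal G_m)$. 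Your justification --- that all four terms of the excision sequence $H^0(X,-)\to H^0(X\setminus\{x\},-)\to H^1_{\{x\}}(-)\to H^1(X,-)$ are $O(m^{n-1})$ for torsion sheaves --- fails at the second term: when the support of $\mathcal G_m$ passes through $x$ (i.e.\ when $C$ meets the fiber over $x$), $H^0(X\setminus\{x\},\mathcal G_m)$ is in general infinite-dimensional, so $H^1_{\{x\}}(\mathcal G_m)$ can be infinite-dimensional (already for $\mathcal O_Z$ with $Z$ a curve through $x$ in a surface, $H^1_{\{x\}}(\mathcal O_Z)\simeq\mathbb C[t,t^{-1}]/\mathbb C[t]$); this is exactly the phenomenon the paper flags when it notes that $\dim H^1_{\{x\}}(\mathcal O_X)$ is infinite on a curve. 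The correct comparison of $\hl(mD+C)$ with $\hl(mD)$ for effective $C$ is genuinely delicate --- it is the content of the later Lemmas \ref{l2} and \ref{l3}, which separate components of $C$ lying over $x$ (where $\hl$ decreases) from the rest (where it increases), and the second case needs $X'$ non-singular and negative $\pi$-ample divisors. Invoking those here would be circular, since the reduction to $X'$ non-singular is itself a consequence of Lemma \ref{l1}.

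The repair is to arrange that the twist does not perturb $\hl$ at all: take the sandwiching line bundles to be pullbacks from the base. For $H$ sufficiently ample on $X$, the divisor $\pi^*H$ is big on $X'$, so generic sections still produce $0\to\mathcal O_{X'}^r(-\pi^*H)\to\mathcal F\to Q\to 0$ and $0\to\mathcal F\to\mathcal O_{X'}^r(\pi^*H)\to R\to 0$ with torsion quotients, and Lemma \ref{invpull} gives $\hl(mD\pm\pi^*H)=\hl(mD)$ exactly, because $H^1_{\{x\}}$ only depends on a neighborhood of $x$ over which $\mathcal O_X(H)$ trivializes. This is the paper's argument; the only remaining error terms are the $H^0_{\{x\}}$ terms that you already handle correctly.
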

\begin{proof}By Lemma \ref{proj}, since we can extend coherent torsion free sheaves to coherent sheaves with the same property, we can assume that $X$  and $X'$ are projective. Let $H$ be sufficiently ample on $X$ so that there exist short exact sequences 
$$0\to\mathcal O_{X'}^r(-\pi^*H)\to\mathcal F\to Q\to 0$$
$$0\to\mathcal F\to\mathcal O_{X'}^r(\pi^*H)\to R\to 0$$ 
with torsion quotients $Q$ and $R$. Such $H$ exists because $\pi^*H$ is a big Cartier divisor. 
\par If $Q_m$ and $R_m$ denote the images of $\pi_*(\mathcal F(mD))$ in $\pi_*(Q(mD))$ and of $\pi_*\mathcal O_{X'}^r(\pi^*H+mD)$ in $\pi_*(R(mD))$ respectively, then 
\begin{equation}\label{e1}\dim H^1_{\{x\}}(X,\pi_*(\mathcal F(mD)))\leq r\cdot\dim H^1_{\{x\}}(X,\pi_*\mathcal O_{X'}(\pi^*H+mD)))+\dim H^0_{\{x\}}(X,R_m),\end{equation}
$$r\cdot\dim H^1_{\{x\}}(X,\pi_*\mathcal O_{X'}(-\pi^*H+mD)))\leq \dim H^1_{\{x\}}(X,\pi_*(\mathcal F(mD)))+ \dim H^0_{\{x\}}(X,Q_m).$$
Since the cohomology of twists of torsion sheaves grows submaximally by \cite[Ex.1.2.33]{L}, from the inequality $$\dim H^0_{\{x\}}(X,Q_m)\leq \dim H^0(X,Q_m)\leq \dim H^0(X,\pi_*(Q(mD)))=\dim H^0(X',Q(mD))$$ together with the corresponding one for $R$ and (\ref{e1}), we conclude by the next easy lemma.
\end{proof}

\begin{lem}\label{invpull} 
\begin{enumerate}
\item If $L$ is a Cartier divisor on $X$, then $\hl(D+\pi^*L)=\hl(D).$ 
\item In particular, if $D$ and $D'$ are linearly equivalent on $X'$, then $\hl(D)=\hl(D').$
\end{enumerate}
\end{lem}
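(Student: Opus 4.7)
I would reduce (i) to a purely local calculation around $x$ via the projection formula, and then derive (ii) either directly from isomorphism of invertible sheaves or as a special case of (i).

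For (i), the projection formula, applicable because $\mathcal{O}_X(L)$ is invertible and hence locally free, gives a canonical isomorphism
$$\pi_*\mathcal{O}_{X'}(D+\pi^*L) \;\cong\; \pi_*\mathcal{O}_{X'}(D)\otimes_{\mathcal{O}_X} \mathcal{O}_X(L).$$
The key point is that $H^1_{\{x\}}(X,-)$ depends only on the restriction of its argument to an arbitrarily small open neighborhood of $x$, by the excision property of cohomology with supports recalled in the notation section. I would therefore shrink $X$ to an affine open $U\ni x$ on which $\mathcal{O}_X(L)$ admits a trivialization; under any such trivialization the right-hand side restricts to $\pi_*\mathcal{O}_{X'}(D)|_U$, and excision then yields $\hl(D+\pi^*L) = \hl(D)$.

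For (ii), the most direct route is to note that linearly equivalent Cartier divisors on $X'$ define isomorphic invertible sheaves $\mathcal{O}_{X'}(D)\cong\mathcal{O}_{X'}(D')$; pushing forward and applying $H^1_{\{x\}}$ gives the equality of dimensions. To honor the phrasing \emph{in particular} and deduce (ii) formally from (i), one writes $D-D'=\mathrm{div}_{X'}(f)$ for some $f\in k(X')^{*}$, observes that $k(X')=k(X)$ since $\pi$ is birational, and sets $L=\mathrm{div}_X(f)$; then $\pi^{*}L=D-D'$, and (i) applies.

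The only real obstacle is a bookkeeping one: correctly invoking the projection formula for $\pi_{*}$ against the locally free sheaf $\pi^{*}\mathcal{O}_X(L)$, and then excising to a trivializing neighborhood of $x$. Notably, neither step requires $X$ or $X'$ to be projective, so unlike several other arguments in this section there is no need to invoke Lemma \ref{proj} to pass to a projective completion.
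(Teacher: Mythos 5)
Your proof is correct and follows essentially the same route as the paper: the paper's argument is exactly to invoke excision for $H^1_{\{x\}}$ and restrict to an affine neighborhood of $x$ trivializing $\mathcal O_X(L)$, which you spell out via the projection formula. Both of your derivations of (ii) — directly from $\mathcal O_{X'}(D)\cong\mathcal O_{X'}(D')$, or from (i) using that a principal divisor $\mathrm{div}_X(f)$ is Cartier and pulls back to $\mathrm{div}_{X'}(f)$ under the birational $\pi$ — are valid.
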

\begin{proof} Cohomology with supports at $x$ is a local invariant by excision. Choosing an affine neighborhood where $\mathcal O_X(L)$ is trivial yields the result.\end{proof}

\begin{cor}\label{c1} 
\begin{enumerate}
\item If $f:Y\to X'$ is projective and birational, then $\vol_x(D)=\vol_x(f^*D).$
\item The previous result holds in particular if $f$ is the normalization of $X'$, or a resolution of singularities.
\end{enumerate}
\end{cor}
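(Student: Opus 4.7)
The plan is to reduce (i) to Lemma~\ref{l1} applied to a natural torsion-free coherent sheaf on $X'$. Concretely, I set $\mathcal F := f_*\mathcal O_Y$. Since $f$ is projective, $\mathcal F$ is coherent on $X'$; since $f$ is birational between integral schemes, it is an isomorphism over a dense open subset of $X'$, and there $\mathcal F$ agrees with $\mathcal O_{X'}$. In particular $\mathcal F$ is torsion-free of rank $r=1$.

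Next, because $f$ is proper and $\mathcal O_{X'}(mD)$ is locally free, the projection formula gives
$$f_*\mathcal O_Y(mf^*D) \;=\; f_*\mathcal O_Y\otimes \mathcal O_{X'}(mD) \;=\; \mathcal F(mD),$$
so composing pushforwards yields $(\pi\circ f)_*\mathcal O_Y(mf^*D)=\pi_*\mathcal F(mD)$. Consequently the local cohomology group that computes $\hl(mf^*D)$ with respect to the composite map $\pi\circ f:Y\to X$ is exactly $H^1_{\{x\}}(X,\pi_*\mathcal F(mD))$. Applying Lemma~\ref{l1} with this $\mathcal F$ and $r=1$ then gives
$$\vol_x(f^*D) \;=\; \limsup_{m\to\infty}\frac{\dim H^1_{\{x\}}(X,\pi_*\mathcal F(mD))}{m^n/n!} \;=\; \vol_x(D),$$
which is statement (i).

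Statement (ii) is an immediate specialization: the normalization morphism is finite (hence projective) and birational, and any resolution of singularities is projective and birational by construction, so both fall within the scope of (i).

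The argument is essentially a direct transport of Lemma~\ref{l1} across the projection formula, and I do not expect any genuine obstacle. The only point that deserves brief verification is the torsion-freeness and rank-one-ness of $f_*\mathcal O_Y$, which is a standard consequence of $f$ being birational between integral schemes, together with the coherence granted by properness of $f$.
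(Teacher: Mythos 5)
Your proposal is correct and follows exactly the paper's own argument: the paper also proves (i) by applying Lemma~\ref{l1} to the rank-one torsion-free sheaf $\mathcal F=f_*\mathcal O_Y$, with the projection formula supplying the identification $(\pi\circ f)_*\mathcal O_Y(mf^*D)=\pi_*(\mathcal F(mD))$ that you spell out. The extra detail you give (torsion-freeness, rank one, coherence) is exactly the verification the paper leaves implicit.
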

\begin{proof} This is an immediate consequence of applying Lemma \ref{l1} for the torsion free sheaf of rank one $\mathcal F=f_*\mathcal O_Y$.\end{proof}

We also deduce a useful result concerning pullbacks by finite maps.
\begin{prop}\label{pfp} Let $\pi:X'\to X$ and $\rho: Y'\to Y$ be projective birational morphisms onto normal quasi-projective varieties of dimension $n$ at least two. Let $y$ be a point on $Y$. Assume $f:X\to Y$ is a finite morphism that has a lift to a generically finite morphism $f':X'\to Y'$ and let $D$ be a Cartier divisor on $Y$. Then
$$(\deg f)\cdot\vol_y(D)=\sum_{x\in f^{-1}\{y\}}\vol_x(f'^*D).$$ Note that the index family for the sum is taken set theoretically, not scheme theoretically.\end{prop}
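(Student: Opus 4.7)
The plan is to compute $(\deg f)\cdot\vol_y(D)$ via Lemma~\ref{l1} applied to $\rho\colon Y'\to Y$ with the torsion-free sheaf $\mathcal F:=f'_*\mathcal O_{X'}$, translate the resulting cohomology with support at $y$ into a sum over the preimages $x\in f^{-1}\{y\}$ using affineness of $f$, and then pass to asymptotic limits. The main obstacle is exchanging $\limsup$ with the finite sum, which forces an appeal to the fact that the defining $\limsup$ for $\vol_x$ is in fact a limit.

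First, since $\rho\circ f'=f\circ\pi$ with $\pi,\rho$ birational and $f$ finite of degree $\deg f$, the morphism $f'$ is dominant and generically finite of degree $\deg f$, so $\mathcal F=f'_*\mathcal O_{X'}$ is a torsion-free coherent $\mathcal O_{Y'}$-module of generic rank $\deg f$. The projection formula for the invertible sheaf $\mathcal O_{Y'}(mD)$ gives $f'_*\mathcal O_{X'}(mf'^*D)=\mathcal F(mD)$, and commutativity of the square produces $\rho_*(\mathcal F(mD))=f_*\pi_*\mathcal O_{X'}(mf'^*D)$. Applying Lemma~\ref{l1} with the roles of $(\pi,X,x,D,\mathcal F)$ played by $(\rho,Y,y,D,\mathcal F)$ then yields
$$(\deg f)\cdot\vol_y(D) = \limsup_{m\to\infty}\frac{\dim H^1_{\{y\}}\bigl(Y, f_*\pi_*\mathcal O_{X'}(mf'^*D)\bigr)}{m^n/n!}.$$

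For the numerator I would use two standard facts about local cohomology. First, because $f$ is affine, the long exact sequence relating $H^i_{\{y\}}$, $H^i(Y,\cdot)$, and $H^i(Y\setminus\{y\},\cdot)$ identifies under $f_*$ with the corresponding sequence on $X$ for the closed subset $f^{-1}\{y\}$, giving $H^i_{\{y\}}(Y,f_*\mathcal G)=H^i_{f^{-1}\{y\}}(X,\mathcal G)$ for any coherent $\mathcal G$ on $X$. Second, since $f^{-1}\{y\}=\{x_1,\ldots,x_k\}$ is a finite set of distinct closed points, any section supported there decomposes uniquely into components supported at each individual $x_j$, so $\Gamma_{f^{-1}\{y\}}=\bigoplus_j\Gamma_{\{x_j\}}$ and this direct sum decomposition descends to derived functors. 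For $\mathcal G=\pi_*\mathcal O_{X'}(mf'^*D)$ this gives
$$\dim H^1_{\{y\}}\bigl(Y,f_*\pi_*\mathcal O_{X'}(mf'^*D)\bigr) = \sum_{x\in f^{-1}\{y\}}h^1_x(mf'^*D).$$

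Combining the two displays produces $(\deg f)\cdot\vol_y(D)=\limsup_m\sum_x h^1_x(mf'^*D)/(m^n/n!)$. A priori only $\limsup(\sum)\leq\sum\limsup$ holds for finite sums, which yields the easy inequality $(\deg f)\vol_y(D)\leq\sum_x\vol_x(f'^*D)$. The reverse inequality is the main technical point, and I would handle it by invoking Corollary~\ref{truelim}: it upgrades each $\limsup$ in the definition of $\vol_x$ to a genuine limit. Once each sequence $h^1_x(mf'^*D)/(m^n/n!)$ is known to converge to $\vol_x(f'^*D)$, the $\limsup$ of their finite sum equals the sum of the limits, and the desired equality $(\deg f)\vol_y(D)=\sum_{x\in f^{-1}\{y\}}\vol_x(f'^*D)$ follows.
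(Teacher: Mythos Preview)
Your proof is correct and follows essentially the same approach as the paper: both reduce to Lemma~\ref{l1} applied to the rank~$\deg f$ torsion-free sheaf $\mathcal F=f'_*\mathcal O_{X'}$ on $Y'$, establish the identity $\dim H^1_{\{y\}}(Y,\rho_*\mathcal F(mD))=\sum_x h^1_x(mf'^*D)$, and then invoke Corollary~\ref{truelim} to pass from $\limsup$ of the sum to the sum of the limits. The only cosmetic difference is that the paper verifies the displayed identity by the explicit quotient description of Remark~\ref{r1} together with a flat base change diagram chase, whereas you phrase the same computation via the abstract facts $H^1_{\{y\}}(Y,f_*\mathcal G)\cong H^1_{f^{-1}\{y\}}(X,\mathcal G)$ for affine $f$ and the decomposition of $\Gamma_{f^{-1}\{y\}}$ over disjoint closed points.
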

\begin{proof} Let $i:Y\setminus\{y\}\to Y$ and $j:X\setminus f^{-1}\{y\}$ be the natural open embeddings. As a consequence of Remark \ref{r1}, $$\dim\frac{j_*j^*\pi_*\mathcal O_{X'}(f'^*D)}{\pi_*\mathcal O_{X'}(f'^*D)}=\sum_{x\in f^{-1}\{y\}}\hl(f'^*D).$$
Looking at global sections and by the finiteness of $f$,
$$\dim\frac{j_*j^*\pi_*\mathcal O_{X'}(f'^*D)}{\pi_*\mathcal O_{X'}(f'^*D)}=\dim f_*\left(\frac{j_*j^*\pi_*\mathcal O_{X'}(f'^*D)}{\pi_*\mathcal O_{X'}(f'^*D)}\right)=\dim\frac{f_*j_*j^*\pi_*\mathcal O_{X'}(f'^*D)}{f_*\pi_*\mathcal O_{X'}(f'^*D)}.$$
Chasing through the diagram
$$\xymatrix{ & X'\ar[rr]^{f'}\ar[dd]^(.3){\pi} & & Y'\ar[dd]^{\rho}\\
X''\ar@{^{(}->}[ur]^{j'}\ar[rr]^(.4){f''}\ar[dd]_{\pi'} & & Y''\ar@{^{(}->}[ur]^{i'}\ar[dd]_(.3){\rho'} & \\
 & X\ar[rr]^(.3)f & & Y\\ X\setminus f^{-1}\{y\}\ar@{^{(}->}[ur]^{j}\ar[rr]^{f'} & & Y\setminus\{y\}\ar@{^{(}->}[ur]_i & }$$ obtained by restricting outside $y$ and its pre-images, and applying flat base change (\cite[Prop. III.9.3]{H}) for the flat open embedding $i$, one finds that  $$\dim\frac{f_*j_*j^*\pi_*\mathcal O_{X'}(f'^*D)}{f_*\pi_*\mathcal O_{X'}(f'^*D)}=\dim\frac{i_*i^*\rho_*\mathcal F(D)}{\rho_*\mathcal F(D)},$$ with $\mathcal F$ denoting the rank $\deg(f)$ torsion free sheaf $f'_*\mathcal O_{X'}$ on $Y'$.  The result is now a consequence of Lemma \ref{l1} and of Corollary \ref{truelim}
\end{proof}

We are ready to study local volumes and draw parallels with the theory of volumes of Cartier divisors on projective varieties.

\begin{prop}[Finiteness]\label{volfin} If $\pi:X'\to X$ is a projective birational morphism and $D$ is a Cartier divisor on $X'$, then $\vol_x(D)$ is finite.\end{prop}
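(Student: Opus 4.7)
The strategy is to reduce to a projective setting and bound $\hl(mD)$ by quantities with known $O(m^n)$ growth. By Lemma \ref{proj} we may complete $\pi$ to a projective birational morphism extending $D$; excision of cohomology with supports shows that $\hl(mD)$ is unchanged, so we assume $X$ and $X'$ are projective. By Corollary \ref{c1} we may further assume $X'$ is smooth.

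Let $\mathcal F_m := \pi_*\mathcal O_{X'}(mD)$ and $i\colon X\setminus\{x\}\hookrightarrow X$. Since $i_*i^*\mathcal F_m$ agrees with $\mathcal F_m$ outside $x$, the quotient $\mathcal Q_m := i_*i^*\mathcal F_m/\mathcal F_m$ is a coherent skyscraper supported at $\{x\}$ whose stalk (and global sections) equal $\hl(mD)$ by Remark \ref{r1}(i). From the short exact sequence $0\to\mathcal F_m\to i_*i^*\mathcal F_m\to \mathcal Q_m\to 0$ we deduce
\[
\hl(mD) = \dim H^0(X,\mathcal Q_m) \leq \dim H^0(X,i_*i^*\mathcal F_m) + \dim H^1(X,\mathcal F_m).
\]

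The key observation for the first term is that $i_*i^*\mathcal F_m\subseteq \mathcal F_m^{**}$, the reflexive hull: any section of $\mathcal F_m$ on a punctured neighborhood of $x$ automatically satisfies the codimension-one divisor conditions defining $\mathcal F_m^{**}$, since $x$ has codimension at least two in the normal variety $X$. As rank-one reflexive sheaves on a normal variety correspond to Weil divisors, $\mathcal F_m^{**} = \mathcal O_X(mD_X)$ for $D_X = \pi_* D$. Choosing a Cartier divisor $A$ on $X$ such that $A - D_X$ is effective as a Weil divisor (available by taking $A$ to be a sufficiently ample multiple of a hyperplane class), we obtain $\mathcal O_X(mD_X) \subseteq \mathcal O_X(mA)$ for every $m\geq 0$, whence $\dim H^0(X,\mathcal F_m^{**}) \leq h^0(X,\mathcal O_X(mA)) = O(m^n)$.

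For the second term, the Leray spectral sequence bounds $\dim H^1(X,\mathcal F_m)$ by $\dim H^1(X',\mathcal O_{X'}(mD)) + \dim H^0(X, R^1\pi_*\mathcal O_{X'}(mD))$. The first summand is $O(m^n)$ by the finiteness of $\widehat h^1(D)$ (cf.\ \cite{K}); the second is $O(m^{n-1})$ because $R^1\pi_*\mathcal O_{X'}(mD)$ is supported on a subset of $X$ of dimension strictly less than $n$ (cf.\ \cite[Example 1.2.33]{L}). Combining the estimates yields $\hl(mD) = O(m^n)$ and hence $\vol_x(D)$ is finite. The essential point, and the only nonformal step, is the containment $i_*i^*\mathcal F_m\subseteq\mathcal F_m^{**}$, which reduces the $H^0$ part of the problem to the polynomial growth of $h^0$ of a Cartier divisor on $X$.
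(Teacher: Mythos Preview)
Your argument is correct and follows essentially the same strategy as the paper: reduce to the projective case, then bound $\hl(mD)$ by an $H^0$-term over $X\setminus\{x\}$ plus $h^1(X,\pi_*\mathcal O_{X'}(mD))$, controlling each by quantities of growth $O(m^n)$. The paper bounds the $H^0$-term by choosing $H$ ample on $X$ with $\pi^*H-D$ effective (so $\pi_*\mathcal O_{X'}(mD)\subseteq\mathcal O_X(mH)$) and bounds the $H^1$-term via the short exact sequence $0\to\mathcal O_{X'}(mD)\to\mathcal O_{X'}(m\pi^*H)\to Q_m\to 0$; your reflexive-hull packaging of the first bound and Leray argument for the second are minor variants of the same ideas.

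One small correction: your appeal to \cite[Ex.~1.2.33]{L} for the estimate $h^0(X,R^1\pi_*\mathcal O_{X'}(mD))=O(m^{n-1})$ is not justified as stated, since that result concerns twists $\mathcal F\otimes L^{\otimes m}$ of a \emph{fixed} coherent sheaf, whereas $R^1\pi_*\mathcal O_{X'}(mD)$ varies with $m$ in a nontrivial way. Fortunately this term is unnecessary: the five-term Leray exact sequence already gives an injection $H^1(X,\mathcal F_m)\hookrightarrow H^1(X',\mathcal O_{X'}(mD))$, so $\dim H^1(X,\mathcal F_m)\leq h^1(X',\mathcal O_{X'}(mD))=O(m^n)$ directly. (The reduction to smooth $X'$ is also harmless but not actually used anywhere in your argument.)
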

\begin{proof}We can assume that $X$ and $X'$ are projective. Choose $H$ ample on $X$ such that $\pi^*H-D$ is effective. From the restriction sequence for cohomology with supports, 
$$\hl(mD)\leq h^0(X\setminus\{x\},\pi_*\mathcal O_{X'}(mD))+h^1(X,\pi_*\mathcal O_{X'}(mD)).$$
By the choice of $H$, we have $$h^0(X\setminus\{x\},\pi_*\mathcal O_{X'}(mD))\leq h^0(X\setminus\{x\},\mathcal O_X(mH))=h^0(X,\mathcal O_X(mH)).$$ The last equality holds since $X$ is normal of dimension $n\geq 2$. For any $m\geq 0$, we have a short exact sequence
$$0\to\mathcal O_{X'}(mD)\to\mathcal O_{X'}(m\cdot\pi^*H)\to Q_m\to 0$$ that defines $Q_m$. Pushing forward and taking cohomology, one finds $$h^1(X,\pi_*\mathcal O_{X'}(mD))\leq h^0(X',Q_m)+h^1(X,\mathcal O_X(mH))\leq$$ $$\leq h^0(X,\mathcal O_X(mH))+h^1(X',\mathcal O_{X'}(mD))+h^1(X,\mathcal O_X(mH)).$$ 
We conclude that $$\vol_x(D)\leq 2\cdot\vol(H)+\widehat h^1(D)+\widehat h^1(H)$$ with the right hand side being finite by \cite[Rem.2.2]{K}.
\end{proof} 

\begin{rem}Note the when $x$ is a point on a non-singular curve, even $\dim H^1_{\{x\}}(\mathcal O_X)$ is infinite, therefore the assumption that $\dim X\geq 2$ is crucial.\end{rem}

\begin{prop}[Homogeneity]\label{homogeneity} With the same hypotheses as before, $\vol_x(mD)=m^n\cdot\vol_x(D)$ for any integer $m\geq 0$.\end{prop}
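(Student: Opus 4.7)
The case $m=0$ is trivial since $\hl(0) = \dim H^1_{\{x\}}(\mathcal{O}_X) = 0$: $X$ is normal of dimension $n \geq 2$, so the depth at $x$ is at least two, and both sides of the asserted identity vanish.

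For $m \geq 1$, I would first obtain the easy inequality by re-indexing with $j = km$:
\begin{equation*}
\vol_x(mD) \;=\; \limsup_{k\to\infty}\frac{\hl(kmD)}{k^n/n!} \;=\; m^n\limsup_{k\to\infty}\frac{\hl((km)D)}{(km)^n/n!}.
\end{equation*}
The right-hand limsup is taken along the subsequence of positive multiples of $m$, so it is at most $\vol_x(D)$; this gives $\vol_x(mD)\leq m^n\vol_x(D)$.

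For the reverse inequality, I would pick $N_\ell\to\infty$ with $\hl(N_\ell D)/(N_\ell^n/n!)\to \vol_x(D)$. After thinning to a subsequence lying in a single residue class modulo $m$, write $N_\ell + s = k_\ell m$ for a fixed $0 \leq s < m$. It then suffices to establish
\begin{equation*}
\hl(k_\ell m D) \;\geq\; \hl(N_\ell D) - o(N_\ell^n),
\end{equation*}
because dividing by $(k_\ell m)^n/n!$ and taking $\limsup$ yields $\vol_x(mD)/m^n \geq \vol_x(D)$.

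The main obstacle is this comparison step, and the plan is as follows. Reduce to projective $X$ and $X'$ via Lemma \ref{proj}, and pick an ample Cartier divisor $A$ on $X$ with $\pi^*A - sD$ effective on $X'$. This gives an inclusion $\mathcal{O}_{X'}(N_\ell D)\hookrightarrow \mathcal{O}_{X'}(k_\ell m D+\pi^*A)$. Pushing forward to $X$, Lemma \ref{invpull}(i) identifies $\hl(k_\ell m D + \pi^*A) = \hl(k_\ell m D)$, while the long exact sequence for cohomology with supports bounds the defect $\hl(k_\ell m D) - \hl(N_\ell D)$ from below by $-\dim H^0_{\{x\}}(X,\mathcal G_\ell)$, where $\mathcal G_\ell$ is the cokernel on $X$. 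Because $\mathcal G_\ell$ is supported on the image of an effective divisor whose class is bounded uniformly in $\ell$, its cohomology grows submaximally by \cite[Example 1.2.33]{L}, and this defect is $o(N_\ell^n)$. The delicate part is tracking that the error sheaves really are supported in codimension at least one and uniformly bounded, so the crude projective bounds from the proof of Proposition \ref{volfin} are sharpened to strictly subleading order.
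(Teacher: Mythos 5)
Your argument is correct, and at bottom it rests on the same idea as the paper's proof: split the sequence $\hl(ND)$ into residue classes modulo $m$ and show that twisting by a divisor from a fixed finite list ($sD$ plus a fixed ample pullback) changes $\hl$ by at most $O(N^{n-1})$, since the defect is controlled by sections of line bundles on a fixed effective divisor of dimension $n-1$. The difference is one of packaging. The paper sets $a_i=\limsup_k \hl((mk+i)D)/(k^n/n!)$, observes that $\vol_x(D)=\max_i a_i/m^n$, and obtains $a_i=\vol_x(mD)$ for every $i$ in one stroke by applying Lemma \ref{l1} to the rank-one torsion-free sheaf $\mathcal F=\mathcal O_{X'}(iD)$ twisted by multiples of $mD$. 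You instead re-derive by hand exactly the special case of that lemma you need, via the cokernel estimate and \cite[Ex.1.2.33]{L}; this is self-contained but duplicates the mechanism already used in the proof of Lemma \ref{l1} (sandwiching between twists of $\mathcal O_{X'}^r(\pm\pi^*H)$ and bounding the torsion quotients submaximally). One bookkeeping slip to fix: with your convention $N_\ell+s=k_\ell m$ one has $k_\ell mD+\pi^*A=N_\ell D+(sD+\pi^*A)$, so the inclusion you want needs $\pi^*A+sD$ effective rather than $\pi^*A-sD$; alternatively write $N_\ell=k_\ell m+s$, in which case $k_\ell mD+\pi^*A=N_\ell D+(\pi^*A-sD)$ and your stated condition is the right one. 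Either divisor is effective for $A$ sufficiently ample because $\pi^*A$ is big, so this does not affect the validity of the argument.
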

\begin{proof} Following ideas in \cite[Lem.2.2.38]{L} or \cite[Prop.2.7]{K}, for $i\in\{0,\ldots,m-1\}$, let $$a_i=_{\rm def}\limsup_{k\to\infty}\frac{\hl((mk+i)D)}{k^n/n!}.$$ It is easy to see that $$\vol_x(D)=\max_i\{\frac{a_i}{m^n}\}.$$ On the other hand, Lemma \ref{l1} implies that $a_0=\ldots=a_{m-1}=\vol_x(mD)$.
\end{proof}

Our prototype example, when we can compute local volumes and see an explicit connection to the theory of volumes of divisors on projective varieties, is the case of cones over polarized projective varieties. 

\begin{exap}[Cone singularities]\label{cone1} Let $(V,H)$ be a non-singular projective polarized variety of dimension $n-1$. The vertex $0$ is the isolated singularity of the normal variety $$X=\Spec\bigoplus_{m\geq 0}H^0(V,\mathcal O(mH)).$$ Blowing-up $0$ yields a resolution of singularities for $X$ that we denote $Y$. The induced map $\pi:Y\to X$ is isomorphic to the contraction of the zero section $E$ of the geometric vector bundle $$\Spec_{\mathcal O_V}\Sym^{\bullet}\mathcal O_V(H).$$ Let $f:Y\to V$ denote the bundle map. We have $f_*\mathcal O_Y=\Sym^{\bullet}\mathcal O_V(H)$. Being the zero section, $E$ is isomorphic to $V$. Concerning divisors on $Y$, we mention the following well known results:
$${\rm Pic}(Y)=f^*{\rm Pic}(V),$$
such that divisors on $Y$ are determined, up to linear equivalence, by their restriction to $E$: $$\mathcal O_Y(D)=f^*\mathcal O_V(D|_E).$$ The co-normal bundle of $E$ in $Y$ is: $$\mathcal O_E(-E)\simeq\mathcal O_V(H).$$
Let $L$ be a divisor on $V$ and $D=f^*L$. Since $X$ is affine, Remark \ref{r1} implies 
$${\rm h}^1_0(mD)=\dim\frac{\bigcup_{k\geq 0}H^0(Y,\mathcal O_Y(mD+kE))}{H^0(Y,\mathcal O_Y(mD))}=\sum_{k\geq 1}h^0(\mathcal O_V(mL-kH)).$$
We aim to show that
$$\vol_0(D)=n\cdot\int_0^{\infty}\vol(L-tH)dt,$$ the volume on the right hand side being the volume of Cartier divisors on the projective variety $V$. Note that the integral is actually definite, because $H$ is ample. By homogeneity and a change of variables, we can assume we are computing the integral over the interval $[0,1]$. Since $H$ is ample, the function $t\to\vol(L-tH)$ is decreasing, hence for all $k>0$, 
$$\frac 1k\cdot\sum_{i=1}^k\vol(L-\frac ikH)\leq\int_0^1\vol(L-tH)dt\leq\frac 1k\cdot\sum_{i=0}^{k-1}\vol(L-\frac ikH).$$
For any $\varepsilon>0$, there exists $s_0$ depending on $\varepsilon$ and $k$ such that for $s>s_0$,
$$\frac nk\cdot\sum_{i=0}^{k-1}\vol(L-\frac ikH)\leq \frac {n!}{k^ns^{n-1}}\cdot\sum_{i=0}^{k-1}h^0(skL-siH)+\varepsilon=$$ 
$$\frac {n!}{k^ns^{n-1}}\cdot\sum_{i=1}^{k}h^0(skL-siH)+\varepsilon+\frac{h^0(skL)-h^0(skL-skH)}{(sk)^{n-1}\cdot k/n!}\leq \frac{{\rm h}^1_0(skD)}{(sk)^n/n!}+\varepsilon+\frac{h^0(skL)-h^0(skL-skH)}{(sk)^{n-1}\cdot k/n!}.$$
Letting $s$ tend to infinity, 
$$\frac nk\cdot\sum_{i=0}^{k-1}\vol(L-\frac ikH)\leq \frac{\vol_0(kD)}{k^n}+\varepsilon+\frac{\vol(kL)-\vol(kL-kH)}{k^{n-1}\cdot k/n}=\vol_0(D)+\varepsilon +\frac{\vol(L)-\vol(L-H)}{k/n},$$
the equality taking place by the $n$ and $n-1$ homogeneity properties of $\vol_0$ and $\vol$ respectively. Taking limits with $k$ and $\varepsilon$, we obtain 
$$n\cdot\int_0^{\infty}\vol(L-tH)dt\leq\vol_0(D).$$ The reverse inequality follows in similar fashion.
\end{exap}

We say that the Weil divisor $D$ on $X'$ \textit{lies over} $x$ if $\pi(D)=\{x\}$ set theoretically, or if $D=0$. We know that the volume of Cartier divisors on projective varieties increases in effective directions and variations can be controlled  by a result of Siu (see \cite[Thm.2.2.15]{L} and \cite[Ex.2.2.23]{L}). As we shall soon see, $\vol_x$ behaves quite differently depending on whether the effective divisor lies over $x$ or if it has no components with this property. Controlling the variation of volumes in effective directions is our key to proving continuity properties. 

\begin{lem}\label{l2} On $X'$, let $E$ be an effective Cartier divisor lying over $x$. Then for any Cartier divisor $D$ on $X'$,
\begin{enumerate}
\item $\hl(D)\geq \hl(D+E)$ and hence $\vol_x(D)\geq \vol_x(D+E).$
\item $\hl(D)-\hl(D+E)\leq h^0((D+E)|_E).$
\item If $E=A-B$ with $A$ and $B$ two $\pi-$ample divisors on $X'$, then $$\vol_x(D)-\vol_x(D+E)\leq n\cdot \vol((D+A)|_E),$$ with the volume in the right hand side being the volume of divisors on the projective $n-1$ dimensional sub-scheme $E$ of $X'$.
\end{enumerate}
\end{lem}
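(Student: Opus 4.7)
The three parts share a common starting point: the short exact sequence
$$0 \to \mathcal{O}_{X'}(D) \to \mathcal{O}_{X'}(D+E) \to \mathcal{O}_E\bigl((D+E)|_E\bigr) \to 0$$
on $X'$ associated to the effective Cartier divisor $E$. After applying $\pi_*$, let $\mathcal{Q}$ denote the cokernel of the injection $\pi_*\mathcal{O}_{X'}(D) \hookrightarrow \pi_*\mathcal{O}_{X'}(D+E)$. By construction $\mathcal{Q}$ embeds into $\pi_*\mathcal{O}_E((D+E)|_E)$, which is a skyscraper at $x$ of length $h^0\bigl(E,(D+E)|_E\bigr)$ because $\pi$ is projective and $E$ is set-theoretically contained in the fiber over $x$. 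Hence $\mathcal{Q}$ is itself supported at $x$, with $\dim \mathcal{Q} \leq h^0((D+E)|_E)$, and its higher local cohomology vanishes since skyscrapers are flasque. The relevant piece of the long exact sequence for $H^{\ast}_{\{x\}}$ reads
$$H^0_{\{x\}}(\mathcal{Q}) \to H^1_{\{x\}}\bigl(\pi_*\mathcal{O}_{X'}(D)\bigr) \to H^1_{\{x\}}\bigl(\pi_*\mathcal{O}_{X'}(D+E)\bigr) \to 0,$$
which yields simultaneously the surjection in (i) and the bound in (ii). The volume statement in (i) then comes from iterating $m$ times: $\hl(mD) \geq \hl(mD+E) \geq \cdots \geq \hl(m(D+E))$, followed by dividing by $m^n/n!$ and taking $\limsup$.

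For (iii), I would apply (ii) telescopically, taking the divisor on which we apply (ii) to be $mD+jE$ for $j=0,\ldots,m-1$, obtaining
$$\hl(mD) - \hl\bigl(m(D+E)\bigr) \leq \sum_{j=1}^{m} h^0\bigl(E,(mD+jE)|_E\bigr).$$
Substituting $E = A-B$, the identity $m(D+A)|_E - (mD+jE)|_E = \bigl((m-j)A + jB\bigr)|_E$ exhibits the right-hand term as a non-negative combination of the ample restrictions $A|_E$ and $B|_E$ on the projective $(n-1)$-dimensional scheme $E$. Choose a constant $K$ so that $kA|_E$ and $kB|_E$ are linearly equivalent to effective divisors for every $k \geq K$; this is possible by ampleness. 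For $K \leq j \leq m-K$, the class $((m-j)A + jB)|_E$ then has an effective representative $F'$, and the inclusion $\mathcal{O}_E(-F') \hookrightarrow \mathcal{O}_E$ twisted by $m(D+A)|_E$ produces
$$\mathcal{O}_E\bigl((mD+jE)|_E\bigr) \hookrightarrow \mathcal{O}_E\bigl(m(D+A)|_E\bigr),$$
so $h^0((mD+jE)|_E) \leq h^0(m(D+A)|_E)$. The $2K$ excluded boundary indices each contribute at most $O(m^{n-1})$ via a routine polynomial growth bound for $h^0$ on $E$, absorbing harmlessly into $o(m^n)$. Using $h^0(E,m(D+A)|_E) \leq \vol((D+A)|_E) \cdot m^{n-1}/(n-1)! + o(m^{n-1})$ from the definition of the volume on $E$, together with the identity $1/(n-1)! = n/n!$, the sum is bounded by $n\vol((D+A)|_E) \cdot m^n/n! + o(m^n)$. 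Dividing by $m^n/n!$, passing to $\limsup$, and invoking the elementary inequality $\limsup a_m - \limsup b_m \leq \limsup(a_m - b_m)$ (valid since both volumes are finite by Proposition \ref{volfin}), we conclude $\vol_x(D) - \vol_x(D+E) \leq n\vol((D+A)|_E)$.

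The main obstacle is preserving the sharp leading constant $n$ despite the boundary of the summation range. Although $((m-j)A + jB)|_E$ is ample for every $1 \leq j \leq m$, ampleness alone does not supply an effective representative without first scaling, so the comparison $h^0((mD+jE)|_E) \leq h^0(m(D+A)|_E)$ breaks down at the $O(1)$ extremes $j \approx 0$ and $j \approx m$. The delicate point is to isolate those terms as a genuine $o(m^n)$ error via a uniform polynomial growth bound, before extracting the precise constant $n$ from the roughly $m$ bulk summands, each of which contributes at the leading order $\vol((D+A)|_E) \cdot m^{n-1}/(n-1)!$.
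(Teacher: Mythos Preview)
Your proof is correct and follows the same strategy as the paper: the long exact sequence of local cohomology you use for (i)--(ii) is equivalent to the paper's diagram chase via Remark~\ref{r1}, and for (iii) both arguments telescope (ii) over $k=1,\dots,m$ and bound each summand by $h^0(m(D+A)|_E)$. Your treatment of (iii) is in fact more careful than the paper's, which asserts effectiveness of $A|_E$ and $(A-E)|_E=B|_E$ directly from $\pi$-ampleness, whereas you correctly note this requires passing to multiples and absorb the finitely many boundary terms as an $o(m^n)$ error.
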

\begin{proof}
Denote by $i$ the natural embedding $X\setminus\{x\}\hookrightarrow X$ and consider the diagram
$$\xymatrix{\pi_*\mathcal O_{X'}(D)\ar@{^{(}->}[d]\ar@{^{(}->}[r]& \pi_*\mathcal O_{X'}(D+E)\ar@{^{(}->}[d] \\
i_*i^*\pi_*\mathcal O_{X'}(D)\ar@{=}[r] & i_*i^*\pi_*\mathcal O_{X'}(D+E)}$$
We get an induced surjection between the cokernels of the vertical maps and part $(i)$ follows by Remark \ref{r1}. The same remark, together with the inclusion map $$\frac{\pi_*\mathcal O_{X'}(D+E)}{\pi_*\mathcal O_{X'}(D)}\hookrightarrow\pi_*\mathcal O_E(D+E)$$ lead to part $(ii)$. A repeated application of $(ii)$ yields 
$$\hl(mD)-\hl(mD+mE)\leq\sum_{k=1}^{m}h^0((mD+kE)|_E)\leq m\cdot h^0(m(D+A)|_E),$$ with the last inequality following from the assumptions on $A$ and $B$ that imply the effectiveness of $A|_E$ and $(A-E)|_E$. Part $(iii)$ follows by taking asymptotic limits.
\end{proof}

\noindent Quite opposite behavior is observed for effective divisors without components over $x$. We can control variations in such directions only in the non-singular case, but we do have the tools to reduce our general questions to this case.

\begin{lem}\label{l3} Assume $X'$ is non-singular and let $F$ be an effective divisor without components lying over $x$. There exists a $\pi-$ample divisor $-\Delta_1-\Delta_2$ with $\Delta_1$ effective lying over $x$ and $\Delta_2$ effective without components over $x$, such that $-\Delta_1-\Delta_2-F$ is $\pi-$very ample. Write $\Delta_1=M-N$ with $M$ and $N$ two $\pi-$ample divisors. Then for any divisor $D$,
\begin{enumerate}
\item $\hl(D+F)\geq\hl(D)$ and $\vol_x(D+F)\geq\vol_x(D)$.
\item $\hl(D+F)-\hl(D)\leq h^0(D|_{\Delta_1})$.
\item $\vol_x(D+F)-\vol_x(D)\leq n\cdot\vol((D+N)|_{\Delta_1}).$
\end{enumerate}
\end{lem}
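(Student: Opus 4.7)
The plan is to prove the four statements in order: the existence of $\Delta_1,\Delta_2,M,N$; part (i); part (ii), which is the core technical step; and part (iii), obtained by asymptotic iteration of (ii).

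For the existence, I would start from a $\pi$-very ample divisor on $X'$ and adjust it by a large antieffective combination of $\pi$-exceptional divisors covering the fiber over $x$ and the support of $F$; such antieffective $\pi$-ample combinations exist by the negativity lemma for projective birational morphisms to a normal variety. This produces a $\pi$-very ample class $H$ with $-H$ effective of the form $\Delta_1+\Delta_2+F$, where $\Delta_1$ collects the components lying over $x$ and $\Delta_2$ the others. For the adjustment taken sufficiently positive, openness of the $\pi$-ample cone in $N^1(X'/X)_{\mathbb R}$ guarantees that $A:=H+F=-(\Delta_1+\Delta_2)$ remains $\pi$-ample. Taking $N$ to be any fixed $\pi$-ample divisor and setting $M:=\Delta_1+N$ gives the desired decomposition $\Delta_1=M-N$ with both $M,N$ $\pi$-ample. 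For (i), the canonical section of $\mathcal O(F)$ furnishes an injection $\mathcal O_{X'}(D)\hookrightarrow\mathcal O_{X'}(D+F)$ which pushes forward to $\pi_*\mathcal O(D)\hookrightarrow\pi_*\mathcal O(D+F)$. Injectivity of the induced map on $H^1_{\{x\}}$ is verified by local-sections analysis: if $s$ is a section of $\mathcal O(D)$ regular on $\pi^{-1}(U)\setminus\pi^{-1}(x)$ and $s\cdot 1_F$ is regular on $\pi^{-1}(U)$, analyzing the divisor of $s$ on components of $F$ (which lie outside $\pi^{-1}(x)$ and on which $s$ is already regular) and on components of $\pi^{-1}(x)$ (which are not components of $F$ by hypothesis) forces $s$ itself to be regular on $\pi^{-1}(U)$. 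Hence $\hl(D+F)\geq\hl(D)$, and $\vol_x(D+F)\geq\vol_x(D)$ asymptotically.

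For (ii), I would combine the two short exact sequences arising from the effective divisors $-A=\Delta_1+\Delta_2$ and $-H=\Delta_1+\Delta_2+F$. Tensoring $0\to\mathcal O(A)\to\mathcal O\to\mathcal O_{\Delta_1+\Delta_2}\to 0$ with $\mathcal O(D)$ and $0\to\mathcal O(H)\to\mathcal O\to\mathcal O_{\Delta_1+\Delta_2+F}\to 0$ with $\mathcal O(D+F)$ (using $D+F+H=D+A$) gives
$$0\to\mathcal O(D+A)\to\mathcal O(D)\to\mathcal O_{\Delta_1+\Delta_2}(D)\to 0,$$
$$0\to\mathcal O(D+A)\to\mathcal O(D+F)\to\mathcal O_{\Delta_1+\Delta_2+F}(D+F)\to 0.$$
Pushing forward by $\pi$ and chasing the local cohomology long exact sequences at $x$ bounds $\hl(D+F)-\hl(D)$ above by $H^*_{\{x\}}$-contributions from the two rightmost terms. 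A further short exact sequence $0\to\mathcal O_{\Delta_2}(-\Delta_1)\to\mathcal O_{\Delta_1+\Delta_2}\to\mathcal O_{\Delta_1}\to 0$ and its analogue incorporating $F$ isolate the $\Delta_1$-contribution: since $\Delta_2$ and $F$ have no components lying over $x$, their pushforwards have no associated point at $x$, so the $H^0_{\{x\}}$ contributions vanish, while the $\pi$-very ampleness of $H$ must be used to control the remaining $H^1_{\{x\}}$ terms off $\Delta_1$. The main technical obstacle is carefully justifying these off-$\Delta_1$ higher-cohomology vanishings; modulo this, the final bound reduces to $\hl(D+F)-\hl(D)\leq h^0(\Delta_1,\mathcal O(D)|_{\Delta_1})$.

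For (iii), applying (ii) to $(mD,mF)$ with the scaled data $(m\Delta_1,m\Delta_2)$—which satisfies the hypothesis because $mH$ remains $\pi$-very ample and $mA$ remains $\pi$-ample—yields $\hl(m(D+F))-\hl(mD)\leq h^0(mD|_{m\Delta_1})$. The filtration of $\mathcal O_{m\Delta_1}$ by the successive conormal sheaves $\mathcal O(-k\Delta_1)|_{\Delta_1}$ for $0\leq k<m$, combined with the decomposition $\Delta_1=M-N$, bounds
$$h^0(mD|_{m\Delta_1})\leq\sum_{k=0}^{m-1}h^0((mD-k\Delta_1)|_{\Delta_1})\leq m\cdot h^0(m(D+N)|_{\Delta_1}),$$
using that for $k\leq m-1$ the line bundle $((m-k)N+kM)|_{\Delta_1}$ is ample on $\Delta_1$ and hence acquires a nonzero section after taking suitable multiples of $M$ and $N$, providing the injection $\mathcal O((mD-k\Delta_1)|_{\Delta_1})\hookrightarrow\mathcal O((mD+mN)|_{\Delta_1})$. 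Dividing by $m^n/n!$ and passing to the limit, the factor $n!/(n-1)!=n$ emerges from comparing the $n$-dimensional asymptotics of $\hl$ with the $(n-1)$-dimensional volume on $\Delta_1$, delivering $\vol_x(D+F)-\vol_x(D)\leq n\cdot\vol((D+N)|_{\Delta_1})$.
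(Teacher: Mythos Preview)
Your treatment of the existence claim and of part (i) is essentially the same as the paper's, though the paper produces the negative $\pi$-ample divisor more directly: since $\pi$ is projective birational, $X'$ is the blow-up of some ideal sheaf on $X$, and the relative Serre bundle is simultaneously $\pi$-ample and anti-effective.

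Part (ii) is where you have a genuine gap, and you yourself flag it. Your plan is to push forward the two short exact sequences with common kernel $\mathcal O_{X'}(D+A)$, pass to local cohomology at $x$, and isolate the $\Delta_1$ contribution. The difficulty is exactly the one you name: the $R^1\pi_*$ terms and the higher $H^i_{\{x\}}$ of the pushforwards of the pieces supported on $\Delta_2$ and $F$ are not controlled by anything in sight. The $\pi$-very ampleness of $H$ gives you nothing useful here, since the sheaves in question are twists by $D$, not by multiples of $H$. I do not see how to close this without importing substantial additional input.

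The paper avoids all of this with a one-line trick that you are missing. Because $-\Delta_1-\Delta_2-F$ is $\pi$-very ample, it is $\pi$-linearly equivalent to an effective divisor $A$ having no components over $x$. Then
\[
\hl(D+F)\ \leq\ \hl(D+F+A+\Delta_2)\ =\ \hl(D-\Delta_1)\ \leq\ \hl(D)+h^0(D|_{\Delta_1}),
\]
where the first inequality is your part (i) applied to the effective divisor $A+\Delta_2$ (no components over $x$), the equality is invariance of $\hl$ under $\pi$-linear equivalence (Lemma~\ref{invpull}), and the last inequality is Lemma~\ref{l2}(ii) applied to the effective divisor $\Delta_1$ lying over $x$. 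This is the whole proof of (ii); part (iii) follows by the same reduction together with Lemma~\ref{l2}(iii). The role of $\pi$-very ampleness is thus not to force any cohomological vanishing, but simply to manufacture the effective representative $A$.

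A smaller issue in your part (iii): the inequality $h^0((mD-k\Delta_1)|_{\Delta_1})\leq h^0(m(D+N)|_{\Delta_1})$ requires that $((m-k)N+kM)|_{\Delta_1}$ be \emph{effective}, not merely ample; ampleness alone does not give you a section, and your parenthetical about ``suitable multiples'' does not repair this since $m,k$ are already fixed. One needs to arrange from the outset that $M|_{\Delta_1}$ and $N|_{\Delta_1}$ are effective, which is possible but should be said.
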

\begin{proof} To justify the existence of $\Delta_1$ and $\Delta_2$, it is enough to show that there exists a negative (its dual is effective) $\pi-$ample divisor. By \cite[Thm.II.7.17]{H}, since $\pi$ is projective birational, $X'$ is the blow-up of some ideal sheaf on $X$. The relative Serre bundle of the blow-up is both negative and $\pi-$ample.
\par Let $i$ be the natural open embedding $X\setminus\{x\}\hookrightarrow X$. Examining the diagram
$$\xymatrix{\pi_*\mathcal O_{X'}(D)\ar@{^{(}->}[d]\ar@{^{(}->}[r]& \pi_*\mathcal O_{X'}(D+F)\ar@{^{(}->}[d] \\
i_*i^*\pi_*\mathcal O_{X'}(D)\ar@{^{(}->}[r] & i_*i^*\pi_*\mathcal O_{X'}(D+F)}$$
we get an induced injective morphism between the cokernels of the vertical maps if we show that
$$\pi_*\mathcal O_{X'}(D)=\pi_*\mathcal O_{X'}(D+F)\cap i_*i^*\pi_*\mathcal O_{X'}(D),$$ the intersection taking place in $i_*i^*\pi_*\mathcal O_{X'}(D+F)$. It is enough to show this on the level of sections over open neighborhoods of $x$. Let $U$ be such an open set on $X$ and let $V$ be its inverse image in $X'$. Let $E$ be the divisorial support of the set theoretic fiber $\pi^{-1}(x)$. Since $X'$ is in particular normal, we have to show $$H^0(V,\mathcal O_{X'}(D))=H^0(V,\mathcal O_{X'}(D+F))\cap H^0(V\setminus\{E\},\mathcal O_{X'}(D))$$ inside $H^0(V\setminus\{E\},\mathcal O_{X'}(D+F))$ which is easily checked. Part $(i)$ follows by Remark \ref{r1}.
\vskip .3cm
\par Let $A$ be a divisor without components over $x$ that is $\pi-$linearly equivalent to $-\Delta_1-\Delta_2-F$. By part $(i)$ and Lemmas \ref{invpull} and \ref{l2}, 
$$\hl(D+F)-\hl(D)\leq\hl(D+F+A+\Delta_2)-\hl(D)=\hl(D-\Delta_1)-\hl(D)\leq h^0(D|_{\Delta_1}).$$ Part $(iii)$ follows similarly. 
\end{proof}

\noindent We aim to prove that $\vol_x(D)$ depends only on the $\pi-$relative numerical class of $D$ in $N^1(X'/X)$.

\begin{lem}\label{l4} Let $N$ be a $\pi-$nef divisor on $X'$. Then $\vol_x(D+N)\geq\vol_x(D).$\end{lem}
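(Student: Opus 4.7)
My plan is to reduce to the case where $X'$ is nonsingular and projective, first handle $\pi$-ample $N$ by producing a linearly equivalent effective divisor avoiding the fiber over $x$, and then extend to $\pi$-nef $N$ via a perturbation argument together with the quantitative estimate in Lemma \ref{l3}(iii). Since $\pi$-nefness is preserved under birational pullback, by Corollary \ref{c1} and Lemma \ref{proj} I may assume $X'$ is nonsingular and both $X$ and $X'$ are projective.

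For the $\pi$-ample case I would pick $L$ ample on $X$ and an integer $m\geq 1$ such that $m(N+\pi^{*}L)$ is very ample on $X'$; this is possible because a $\pi$-ample divisor plus the pullback of a sufficiently ample divisor is ample. By Bertini, a general member $F\in|m(N+\pi^{*}L)|$ is a prime divisor, and as the linear system is positive dimensional it may be chosen distinct from any of the finitely many prime divisors over $x$. Thus $F$ is effective without components over $x$, and Lemma \ref{invpull} combined with Lemma \ref{l3}(i) yields
\begin{equation*}
\vol_x(mD+mN)=\vol_x(mD+mN+m\pi^{*}L)=\vol_x(mD+F)\geq\vol_x(mD),
\end{equation*}
so dividing by $m^n$ and using Proposition \ref{homogeneity} finishes this case.

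For general $\pi$-nef $N$ I would fix, by the construction above, a $\pi$-ample effective Cartier divisor $F$ without components over $x$. For every integer $r\geq 1$ the divisor $rN+F$ is $\pi$-ample, so the ample case applied to the pair $(rD,\,rN+F)$ yields $\vol_x(r(D+N)+F)\geq r^{n}\vol_x(D)$. It therefore suffices to prove
\begin{equation*}
\lim_{r\to\infty}\frac{\vol_x(r(D+N)+F)}{r^{n}}=\vol_x(D+N).
\end{equation*}
Lemma \ref{l3}(i) provides the lower bound $\vol_x(r(D+N)+F)\geq r^{n}\vol_x(D+N)$, while Lemma \ref{l3}(iii), applied with its $D$ replaced by $r(D+N)$, gives
\begin{equation*}
\vol_x(r(D+N)+F)-\vol_x(r(D+N))\leq n\cdot\vol\bigl((r(D+N)+N_{0})|_{\Delta_{1}}\bigr),
\end{equation*}
where $\Delta_{1}$ and $N_{0}$ depend only on $F$. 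Since $\Delta_{1}$ has dimension $n-1$, the right hand side is $O(r^{n-1})$ by homogeneity of the volume on projective schemes, so dividing by $r^{n}$ sandwiches the desired limit. The main obstacle is exactly this last step: the natural way to pass from ample to nef would be to approximate $N$ by $\pi$-ample classes and invoke continuity of $\vol_x$, but continuity is itself among the principal statements being established in this section, so Lemma \ref{l3}(iii) must play the role of a quantitative substitute for continuity in the non-over-$x$ effective direction.
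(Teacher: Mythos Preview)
Your proof is correct and follows essentially the same route as the paper. The paper does not separate out the $\pi$-ample case explicitly: it fixes a single $\pi$-ample $F$ (implicitly effective without components over $x$), observes that for every $m$ some multiple of $mN+F$ is $\pi$-linearly equivalent to an effective divisor avoiding the fiber over $x$, and then runs the same limit argument with Lemma~\ref{l3}(iii) that you use. Your version makes the construction of $F$ via Bertini and the reduction to the ample case more explicit, but the key mechanism---perturb by a fixed $\pi$-ample $F$, use Lemma~\ref{l3}(i) together with Lemma~\ref{invpull} to get the inequality for $r(D+N)+F$, and then invoke Lemma~\ref{l3}(iii) as a quantitative substitute for continuity to strip off $F$ in the limit---is identical.
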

\begin{proof}
By Lemma \ref{l1}, we can assume that $X'$ is non-singular. Let then $F$ be a $\pi-$ample divisor on $X'$. For any $m\geq 1$, there exists $k_m>0$ such that $k_m(mN+F)$ is $\pi-$linearly equivalent to an effective divisor without components lying over $x$. By Lemmas  \ref{invpull} and \ref{l3} and Proposition \ref{homogeneity}, we have 
\begin{equation}\label{e2}\frac{\vol_x(m(D+N)+F)}{m^n}\geq \vol_x(D).\end{equation}
By part $(iii)$ of Lemma \ref{l3}, with the notation there,
$$\vol_x(m(D+N)+F)-\vol_x(m(D+N))\leq n\cdot\vol((m(D+N)-\Delta_1+M)|_{\Delta_1}).$$
Since the support of $\Delta_1$ is of dimension $n-1$, dividing by $m^n$ and applying Proposition \ref{homogeneity} and the inequality (\ref{e2}), 
$$\vol_x(D+N)=\lim_{m\to\infty}\frac{\vol_x(m(D+N)+F)}{m^n}\geq\vol_x(D).$$ 
\end{proof}

\begin{cor}[Relative numerical invariance]\label{numinv} Let $N$ be a $\pi-$numerically trivial divisor on $X'$. Then for any Cartier divisor $D$ on $X'$, we have $$\vol_x(D+N)=\vol_x(D).$$\end{cor}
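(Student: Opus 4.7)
The strategy is to deduce the corollary from Lemma \ref{l4} applied twice. The key observation is that $\pi$-numerical triviality of a Cartier divisor $N$ means that its restriction to every fiber of $\pi$ has zero intersection number with every curve contained in that fiber. In particular, both $N$ and $-N$ satisfy the defining inequality for $\pi$-nefness (namely, non-negative intersection with all curves in fibers). Hence $N$ and $-N$ are simultaneously $\pi$-nef.

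Given this, the proof becomes a two-line invocation of the lemma. First, applying Lemma \ref{l4} with the $\pi$-nef divisor $N$ and the base divisor $D$, I obtain
\[
\vol_x(D+N)\ \geq\ \vol_x(D).
\]
Second, applying Lemma \ref{l4} with the $\pi$-nef divisor $-N$ and the base divisor $D+N$, I obtain
\[
\vol_x(D)\ =\ \vol_x\bigl((D+N)+(-N)\bigr)\ \geq\ \vol_x(D+N).
\]
Combining these two inequalities yields the desired equality $\vol_x(D+N)=\vol_x(D)$.

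There is essentially no obstacle: the only subtlety worth recording is the symmetric role played by $N$ and $-N$ under $\pi$-numerical triviality, which is what allows Lemma \ref{l4} to be iterated in both directions. Once the corollary is established for a $\pi$-numerically trivial Cartier divisor $N$, standard linearity arguments (passing to multiples via Proposition \ref{homogeneity} and taking differences of Cartier divisors representing the same class in $N^1(X'/X)$) imply that $\vol_x$ descends to a well-defined function on $N^1(X'/X)_{\mathbb{Z}}$, which is the first step toward extending it continuously to $N^1(X'/X)_{\mathbb R}$ as asserted in the main theorem of the introduction.
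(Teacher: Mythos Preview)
Your proof is correct and follows exactly the same approach as the paper: both use that $N$ and $-N$ are simultaneously $\pi$-nef, then apply Lemma \ref{l4} twice to sandwich $\vol_x(D+N)$ between $\vol_x(D)$ and itself. The additional commentary you give on descent to $N^1(X'/X)$ is accurate but not part of the corollary's proof proper.
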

\begin{proof}
Both $N$ and $-N$ are $\pi-$nef, hence $$\vol_x(D)\leq\vol_x(D+N)\leq\vol_x((D+N)+(-N))=\vol_x(D).$$
\end{proof}

\noindent By Corollary \ref{numinv}, the local volume $\vol_x$ is a well defined function on $N^1(X'/X)$. From the homogeneity result in Proposition \ref{homogeneity}, it also has a natural extension to $N^1(X'/X)_{\mathbb Q}$. By proving continuity on this space, we are able to extend to real coefficients.

\begin{prop}[Continuity]\label{cont} Let $\pi:X'\to X$ be a projective birational morphism to a normal quasi-projective variety $X$ of dimension $n$ and let $x$ be a point on $X$. The relative numerical real space $N^1(X'/X)_{\mathbb R}$ is finite dimensional and fix a norm $|\cdot|$ on it. Then there exists a positive constant $C$ such that for any $A$ and $B$ in the rational vector space $N^1(X'/X)_{\mathbb Q}$ we have the estimate: $$|\vol_x(B)-\vol_x(A)|\leq C\cdot (\max(|A|,|B|))^{n-1}\cdot|A-B|.$$
\end{prop}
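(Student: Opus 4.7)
My plan is to imitate the classical proof of continuity of the volume function on a projective variety (cf.\ \cite[Thm.~2.2.44]{L}), replacing Siu's inequality by the variational estimates of Lemmas~\ref{l2} and~\ref{l3}. First, by Corollary~\ref{c1}(ii) we may assume $X'$ is smooth, so that Lemma~\ref{l3} applies, and by Lemma~\ref{proj} that $X$ and $X'$ are projective. By the $n$-homogeneity of $\vol_x$ (Proposition~\ref{homogeneity}), it suffices to bound $|\vol_x(mA)-\vol_x(mB)|$ for a large integer $m$ and divide by $m^n$ at the end.

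\textbf{An effective frame.} Fix once and for all a $\pi$-very ample class $[H]$ and, as in the opening paragraph of the proof of Lemma~\ref{l3}, an effective divisor $\Delta_0$ supported over $x$ with $-\Delta_0$ $\pi$-ample. Using that the $\pi$-ample cone is open in $N^1(X'/X)_{\mathbb R}$, I claim that any rational class $\xi$ admits a decomposition
$$\xi \; = \; [F_h]-[G_h]+[F_o]-[G_o],$$
where $F_h,G_h$ are effective integral Cartier divisors without components over $x$, $F_o,G_o$ are effective integral Cartier divisors supported over $x$, and $|F_h|+|G_h|+|F_o|+|G_o|\leq C_0\,|\xi|$ for a constant $C_0$ depending only on $(H,\Delta_0)$ and the chosen norm. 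One obtains such a decomposition by first writing $\xi=\alpha-\beta$ with $\alpha,\beta$ $\pi$-ample of norm $O(|\xi|)$, representing each as a suitable multiple of $H+s\Delta_0$ plus an effective correction of bounded norm, and finally splitting every effective piece into its horizontal and over-$x$ parts.

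\textbf{Telescoping chain.} Set $M=\max(|A|,|B|)$, $\delta=|A-B|$, and assume $\delta\leq M$ (otherwise the bound is trivial upon adjusting $C$). Apply the decomposition to $m(B-A)$, producing four effective pieces of total norm at most $C_0m\delta$. Interpolate from $mA$ to $mB$ via
$$mA \; \to \; mA+F_h \; \to \; mA+F_h+F_o \; \to \; mA+F_h+F_o-G_o \; \to \; mB,$$
so that each elementary step adds or removes a single effective piece of purely horizontal or purely over-$x$ type, with a removal handled by reading the relevant lemma at the opposite endpoint. At every intermediate point the class has norm $O(mM)$. Applying the integral estimates in the proofs of Lemmas~\ref{l2}(ii) and~\ref{l3}(ii) to $k$-fold scalings and passing to the asymptotic limit in $k$, each elementary step contributes a change in $\vol_x$ of size at most $C_1\,(mM)^{n-1}\cdot|(\text{piece added or removed})|$. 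The exponent $n-1$ arises because the right-hand sides of the two lemmas are ordinary volumes of Cartier divisors on fixed $(n-1)$-dimensional projective subschemes of $X'$; these volumes are continuous and $(n-1)$-homogeneous on their Néron--Severi groups, hence polynomially bounded of degree $n-1$ on the bounded region in which the starting classes lie. The linear factor in the norm of the piece comes from the linear dependence of the restricted volumes on the component multiplicities.

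\textbf{Assembly and main obstacle.} Summing the four contributions with the total norm bound yields
$$|\vol_x(mA)-\vol_x(mB)| \; \leq \; C_2\,(mM)^{n-1}\cdot m\delta \; = \; C_2\,m^n M^{n-1}\delta,$$
and dividing by $m^n$ gives the required inequality with $C=n!\,C_2$. The main technical hurdle is establishing the effective frame: one must guarantee that every rational class decomposes into effective integral parts of \emph{both} horizontal and over-$x$ type with norms linear in $|\xi|$, and one must uniformly control the auxiliary $\pi$-ample divisors appearing in Lemma~\ref{l3} so that the chain estimate respects the bound $O((mM)^{n-1})$ at every step. Once the frame is in place, the telescoping bookkeeping and the appeal to continuity of projective volumes on $(n-1)$-dimensional subschemes are routine.
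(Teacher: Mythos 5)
Your strategy is essentially the paper's: reduce to $X'$ non-singular, use $n$-homogeneity to clear denominators, telescope from $A$ to $B$ through a chain of classes differing by effective divisors, control each step by the variational estimates of Lemmas~\ref{l2}(iii) and~\ref{l3}(iii), and bound the resulting restricted volumes on fixed $(n-1)$-dimensional subschemes by intersection numbers growing like $(\max(|A|,|B|))^{n-1}$ times the norm of the piece moved. The one step you leave open --- the uniform ``effective frame'' --- is exactly where the paper is more economical: it fixes once and for all a \emph{basis} $\lambda_1,\ldots,\lambda_k$ of $N^1(X'/X)_{\mathbb R}$ consisting of integral $\pi$-very ample divisors without components over $x$, computes the norm as the sup-norm of coordinates in this basis, and telescopes coordinate by coordinate. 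Then every elementary step adds or removes an integer multiple $|b_i|\lambda_i$ of a fixed effective divisor without components over $x$, so only Lemma~\ref{l3} is needed (your over-$x$ pieces $F_o,G_o$ and Lemma~\ref{l2} are superfluous), the auxiliary divisors $\Delta_1,\Delta_2,M,N$ of Lemma~\ref{l3} are chosen once to work simultaneously for all $\lambda_i$ (with $|b_i|\Delta_1$ serving as the restriction scheme at the $i$-th step, whence the linear factor $|b_i|$), and the linear-in-$|\xi|$ norm control you worry about is automatic. With that substitution your argument closes; as written, the claim that an arbitrary rational class admits integral effective representatives of norm $O(|\xi|)$ via ``suitable multiples of $H+s\Delta_0$'' is not justified (the multiple needed to reach very ampleness is not uniformly bounded), so the basis device is not optional bookkeeping but the actual fix.
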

\begin{proof} We show we can assume that $X'$ is non-singular. Let $f:Y\to X'$ be a resolution of singularities. Then $f^*$ induces an injective morphism $N^1(X'/X)\hookrightarrow N^1(Y/X)$ which does not change the values of $\vol_x$ by Corollary \ref{invpull}. Hence it is sufficient to prove our estimate for $X'$ non-singular.
\par We can choose $\lambda_1,\ldots,\lambda_k$ a basis for $N^1(X'/X)_{\mathbb R}$ composed of integral $\pi-$very ample divisors without components over $x$. Relative to this basis, we can assume that $$|(a_1,\ldots,a_k)|=\max_{1\leq i\leq k}|a_i|.$$ With notation as in Lemma \ref{l3}, choose $\Delta_1$ and $\Delta_2$ two effective integral divisors with the first lying over $x$ whereas the second has no components over $x$ such that for all $i\in\{1,\ldots,k\}$, the divisor $-\Delta_1-\Delta_2-\lambda_i$ is $\pi-$linearly equivalent to one without components lying over $x$. Write $\Delta_1=M-N$ with $M$ and $N$ two $\pi-$ample divisors. Let $$A=(a_1,\ldots,a_k),\hspace{.5 in} B=(a_1+b_1,\ldots,a_k+b_k),\hspace{.5 in} N=(\alpha_1,\ldots,\alpha_k)$$ with all entries being rationals. Since our estimate to prove and $\vol_x$ are both $n-$homogeneous, we can further assume that all the entries are integers. Note that the $\alpha_i$ are fixed. 
\par If we denote $B_i=(a_1,\ldots,a_i,a_{i+1}+b_{i+1},\ldots,a_k+b_k)$ and set
$$A_i=\left\{\begin{array}{ll}B_{i-1}+b_iN,& \mbox{if }b_i\geq 0\\ B_i-b_iN,& \mbox{if }b_i\leq 0\end{array}\right.,$$ then
$$|\vol_x(B)-\vol_x(A)|\leq\sum_{i=1}^k|\vol_x(B_{i-1})-\vol_x(B_i)|\leq n\cdot\sum_{i=1}^k\vol(A_i|_{|b_i|\Delta_1})$$ by Lemma \ref{l3}. Let $$\alpha=\max_{1\leq i\leq k}(|\alpha_i|).$$ Since $\lambda_i|_{|b_i|\Delta_1}$ is ample for all $i$ and $\vol(D|_{\Delta_1})=D^{n-1}\cdot \Delta_1$ if $D$ is $\pi-$ample,
$$n\cdot\sum_{i=1}^k\vol(A_i|_{|b_i|\Delta_1})\leq n(1+\alpha)^{n-1}\cdot\max_{1\leq i\leq k}(|a_i|+|b_i|)^{n-1}\cdot ((\sum_{i=1}^k\lambda_i)^{n-1}\cdot\Delta_1)\cdot\sum_{i=1}^k|b_i|.$$ Setting $$C=nk\cdot 2^{n-1}(1+\alpha)^{n-1}\cdot ((\sum_{i=1}^k\lambda_i)^{n-1}\cdot\Delta_1)$$ concludes the proof.
\end{proof}

\noindent Putting together Propositions \ref{homogeneity} and \ref{cont} with Corollary \ref{numinv}, we have proved:

\begin{thm}\label{t1} Let $X$ be a normal complex quasi-projective variety of dimension $n$ and let $x$ be a point on $X$. Let $\pi:X'\to X$ be a projective birational morphism. Then $\vol_x$ is a well defined, $n-$homogeneous and continuous function on $N^1(X'/X)_{\mathbb R}$.\end{thm}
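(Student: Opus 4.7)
The proof will simply assemble the three results the author has just established: Corollary \ref{numinv} (relative numerical invariance), Proposition \ref{homogeneity} (integer homogeneity), and Proposition \ref{cont} (the Lipschitz estimate on $N^1(X'/X)_{\mathbb Q}$). The substance has already been done; what remains is to promote these from statements about individual divisors or rational classes to a statement about a function on the real vector space $N^1(X'/X)_{\mathbb R}$.

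First, I will argue that $\vol_x$ descends to a well-defined function on $N^1(X'/X)$. By Lemma \ref{invpull}, linearly equivalent divisors have equal $\hl$, so $\vol_x$ is a function on the Picard group of $X'$ over $X$. Corollary \ref{numinv} then pushes this down modulo $\pi$-numerical equivalence, so $\vol_x$ is well defined on $N^1(X'/X)$. Next, integer homogeneity from Proposition \ref{homogeneity} together with this descent lets me extend $\vol_x$ to $N^1(X'/X)_{\mathbb Q}$ by setting $\vol_x(\tfrac{1}{m}\alpha) := m^{-n}\vol_x(\alpha)$ for integral $\alpha$; consistency under refining the denominator is immediate from the homogeneity identity applied to $km\cdot\alpha$ versus $km\cdot(\tfrac{1}{m}\alpha)$. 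On the rational vector space the function is manifestly $n$-homogeneous with respect to positive rational scalars.

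The main step is passing to $\mathbb R$. Proposition \ref{cont} supplies, on $N^1(X'/X)_{\mathbb Q}$, the estimate
\[
|\vol_x(B)-\vol_x(A)| \leq C\cdot(\max(|A|,|B|))^{n-1}\cdot |A-B|,
\]
which is locally Lipschitz. Since $\mathbb Q$-classes are dense in $N^1(X'/X)_{\mathbb R}$, this estimate shows that any Cauchy sequence of rational classes converging to $\xi\in N^1(X'/X)_{\mathbb R}$ has its images under $\vol_x$ form a Cauchy sequence of reals; define $\vol_x(\xi)$ as the limit. Independence from the approximating sequence follows from the same estimate. The extended function automatically satisfies the same Lipschitz inequality on all of $N^1(X'/X)_{\mathbb R}$ and is therefore continuous. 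Finally, $n$-homogeneity for arbitrary real $t\geq 0$ follows by approximating $t$ by rationals and using continuity together with the already established rational homogeneity; for $t<0$, homogeneity is a matter of convention since the definition is via $m\to\infty$, but one may extend so that $\vol_x$ is $n$-homogeneous in the sense $\vol_x(t\xi) = |t|^n \vol_x(\pm\xi)$ if needed, or more simply state $n$-homogeneity for nonnegative scalars which is what the asymptotic definition naturally gives.

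The only real obstacle is the $\mathbb Q$-to-$\mathbb R$ extension, and it is not much of one: the Lipschitz estimate in Proposition \ref{cont}, whose proof uses the finite-dimensionality of $N^1(X'/X)_{\mathbb R}$ together with the control on effective variations from Lemmas \ref{l2} and \ref{l3}, is precisely what one needs to extend by density. The rest is bookkeeping.
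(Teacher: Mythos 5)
Your proposal is correct and follows exactly the paper's route: the author likewise proves Theorem \ref{t1} by simply assembling Corollary \ref{numinv}, Proposition \ref{homogeneity}, and Proposition \ref{cont}, with the descent to $N^1(X'/X)$, the extension to $\mathbb Q$-classes by homogeneity, and the extension to $\mathbb R$-classes by the locally Lipschitz estimate being the intended (and here explicitly written out) bookkeeping. No gaps.
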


We say a few words about extending the results in this subsection to proper birational morphisms and to algebraically closed fields of arbitrary characteristic. 
\begin{rem}\label{nonqproj}By working in an affine neighborhood of $x\in X$, we can remove the assumption that $X$ is quasi-projective. Our only global result is Theorem \ref{t1} whose hypothesis restrictions come from the local statements.\end{rem}
\begin{rem}[Proper morphisms]\label{propermorph}Using Chow's lemma (\cite[Ex.II.4.10]{H}) and adjusting the proof of Lemma \ref{l1}, we can extend our results to proper birational morphisms $\pi:X'\to X$.\end{rem}
\begin{rem}[Generically finite morphisms]\label{genfinmorph} Let $\pi:X'\to X$ be a generically finite proper morphism with $X$ normal, let $D$ be a Cartier divisor on $X'$ and let $x$ be a point on $X$. Denote by $\widetilde X$ the normalization of $X'$, by $\widetilde D$ the lift of $D$ and by $\widetilde Y$ the normalization of the Stein factorization (\cite[Cor.III.11.5]{H}) of $\pi$. Note that $\widetilde Y$ is the Stein factorization of the induced map $\widetilde X\to X$ and that the map $\widetilde X\to\widetilde Y$ is birational. Let $\{y_1,\ldots,y_k\}$ be the set theoretic pre-image of $x$ in $\widetilde Y$. Then one can define 
$$\vol_x(D)=_{\rm def}\frac 1{\deg\pi}\sum_{i=1}^k\vol_{y_i}(\widetilde D).$$ Proposition \ref{pfp} and Lemma \ref{l1} make this definition compatible with the birational case, i.e.,
$$\vol_x(D)=\limsup_{m\to\infty}\frac{\dim H^1_{\{x\}}(X,\pi_*\mathcal O_{X'}(mD))}{\deg(\pi)\cdot m^n/n!}.$$
\end{rem}
\begin{rem}[Positive characteristic]\label{charp} We have used characteristic $0$ in studying the variational behavior of local volumes in Lemma \ref{l3} where we reduced to $X'$ being non-singular, which we could upon replacing $X'$ by a resolution of singularities. In arbitrary characteristic, over an algebraically closed field, to extend the results of this subsection, one first replaces $X'$ by a regular alteration (\cite{dJ}) and applies the discussion above for generically finite proper morphisms to reduce to the case where $\pi$ is birational and $X'$ is regular. The price to pay is that $x$ is replaced by a collection of finite points, but this is afforded by Proposition \ref{pfp} via Corollary \ref{truelim} which extends in characteristic $p$ under the assumption that $X'$ is regular.
\end{rem}

\subsection{Convex bodies and Fujita approximation}

Given a projective birational morphism $\pi:X'\to X$ onto the complex normal algebraic variety $X$ of dimension $n\geq 2$ and given $x\in X$, for any Cartier divisor $D$ on $X'$, we realize $\vol_x(D)$ as a volume of a not necessarily convex polytope arising naturally as the bounded difference of two possibly infinite convex nested polyhedra. This approach has proven effective in \cite{LM} in particular for proving that volumes of Cartier divisors are actual limits and for developing Fujita approximation type results. By employing similar techniques, we extend these results to the local setting.

\par Assume unless otherwise stated that $\pi:(X',E)\to(X,x)$ is a log-resolution of the normal affine $X$, with $x$ not necessarily an isolated singularity, and let $E=E_1+\ldots+E_k$ be the irreducible decomposition of the reduced fiber over $x$. Since $X$ is assumed to be affine, for any divisor $D$ on $X'$, we have by Remark \ref{r1} that
$$H^1_{\{x\}}(X,\pi_*\mathcal O_{X'}(D))=\frac{H^0(X'\setminus E,\mathcal O_{X'}(D))}{H^0(X',\mathcal O_{X'}(D))}.$$

The dimension of the above vector space is $\hl(D)$ (see \ref{def:hl}). Spaces of sections of multiples of line bundles on $X'$ are studied in \cite{LM} via valuation like functions defined with respect to a choice of a complete flag. It is important to work with line bundles on $X'$ and not $X'\setminus E$. In this regard, the following lemma helps us handle $H^0(X'\setminus E,\mathcal O_{X'}(mD))$ for all $m\geq 0$.

\begin{lem}\label{linbd1}In the above setting, for any divisor $D$ on non-singular $X'$ there exists $r>0$ such that for all $m\geq 0$ there is a natural identification $$H^0(X'\setminus E,\mathcal O_{X'}(mD))\simeq H^0(X',\mathcal O_{X'}(m(D+rE))).$$\end{lem}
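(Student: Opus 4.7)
The identification decomposes into two inclusions. The inclusion $H^0(X',\mathcal O_{X'}(m(D+rE)))\hookrightarrow H^0(X'\setminus E,\mathcal O_{X'}(mD))$ is immediate from restriction: since $rE$ is supported on $E$, the two line bundles coincide canonically on $X'\setminus E$, and restricting sections gives the inclusion for any $r\geq 0$ and $m\geq 0$. The content of the lemma is the reverse inclusion, uniformly in $m$.

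I would translate this into a bound on valuations. Setting $d_i=v_{E_i}(D)$, a rational function $f\in K(X')$ lies in $H^0(X'\setminus E,\mathcal O_{X'}(mD))$ iff $v_F(f)+m v_F(D)\geq 0$ for every prime divisor $F\not\subset E$, and lies in $H^0(X',\mathcal O_{X'}(m(D+rE)))$ iff, in addition, $v_{E_i}(f)\geq -m(d_i+r)$ for every component $E_i$. The lemma thus reduces to producing $r$ (depending on $D$) such that every $f$ in the left-hand set obeys this extra bound, with $r$ independent of $m$.

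The key ingredient is the existence of an effective Cartier divisor $A=\sum a_i E_i$ with support equal to $E$ such that $-A$ is $\pi$-ample. This is a standard consequence of $\pi$ being projective birational to the normal $X$: one realizes $\pi$ as the blow-up of a coherent ideal sheaf on $X$ cosupported at $x$ and takes $A$ to be (a positive multiple of) the associated exceptional divisor. Passing to the sheaf level, the desired identification becomes the equality $\pi_*\mathcal O_{X'}(m(D+rE)) = j_*\mathcal O_{X\setminus x}(mD_0)$ on $X$, where $j\colon X\setminus x\hookrightarrow X$ and $D_0$ is the Weil divisor on $X\setminus x$ corresponding to $D$ under the isomorphism $\pi|_{X'\setminus E}$. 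Since $X$ is affine and the right-hand side is the reflexive hull of the left, this sheaf equality is equivalent to the stated identification of global sections. To establish it for all $m$ simultaneously I would exploit $\pi$-ampleness of $-A$ via a uniform Serre/Fujita vanishing: for $r$ sufficiently large (determined by $D$ and $A$) and an appropriate auxiliary $q$, the short exact sequence
$$0\to\mathcal O_{X'}(m(D+rE)-qA)\to\mathcal O_{X'}(m(D+rE))\to\mathcal O_{X'}(m(D+rE))|_{qA}\to 0,$$
combined with the vanishing $R^1\pi_*\mathcal O_{X'}(m(D+rE)-qA)=0$ (uniform in $m$ thanks to Fujita-type vanishing applied to powers of the fixed line bundle $\mathcal O_{X'}(D+rE)$ twisted by the $\pi$-ample $\mathcal O_{X'}(-A)$), produces the required sheaf equality after pushing forward.

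The main obstacle is precisely this uniformity in $m$: a straightforward Noetherian stabilization argument on the ascending chain $\{H^0(X',\mathcal O_{X'}(mD+kE))\}_{k\geq 0}$ inside $H^0(X'\setminus E,\mathcal O_{X'}(mD))$ gives, for each fixed $m$, an integer $k(m)$ at which the chain terminates, but that $k(m)$ is only known to be finite a priori. Upgrading finiteness to the linear bound $k(m)\leq mr$ is what requires the $\pi$-ampleness of $-A$ and a Fujita-type vanishing uniform across tensor powers of $\mathcal O_{X'}(D+rE)$; the alternative of attempting subadditivity via Fekete on the valuations $m\mapsto\min\{v_{E_i}(f):f\in H^0(X'\setminus E,\mathcal O_{X'}(mD))\}$ gives only half of what is needed, since sub- (not super-) additivity does not by itself exclude a bounded-below limit.
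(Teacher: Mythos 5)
Your reduction of the statement to a uniform lower bound on the pole orders $v_{E_i}(f)$ is the right way to see the problem, and your closing remark correctly identifies the crux (linearity in $m$ of the truncation level, not mere finiteness). But the ingredient you propose to supply that bound does not exist in the generality of the lemma. In the setting of this subsection, $E$ is only the reduced fiber over $x$, and $x$ is \emph{not} assumed to be an isolated singularity, so the log-resolution $\pi$ will in general contract divisors that do not lie over $x$. A divisor $A$ with $-A$ $\pi$-ample must have support meeting every positive-dimensional fiber of $\pi$; hence it cannot be supported on $E$ alone, and the blow-up construction you cite produces an anti-ample divisor supported on the \emph{whole} exceptional locus, not on the fiber over $x$. (Compare Lemma \ref{l3} of the paper, where the negative $\pi$-ample divisor is deliberately written as $-\Delta_1-\Delta_2$ with $\Delta_2$ having no components over $x$.) For the same reason, your reformulation of the target as $\pi_*\mathcal O_{X'}(m(D+rE))=j_*\mathcal O_{X\setminus x}(mD_0)$ presumes $\pi$ is an isomorphism off $x$, which is not assumed. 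Finally, even where $A$ does exist, the asserted vanishing $R^1\pi_*\mathcal O_{X'}(m(D+rE)-qA)=0$ uniformly in $m$ is not a consequence of Fujita-type vanishing: that theorem gives uniformity over \emph{nef} twists of a fixed sheaf, and $m(D+rE)$ is not $\pi$-nef, so no fixed $q$ (nor $q$ linear in $m$) is guaranteed to work.

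The paper's proof avoids positivity altogether. Since $X$ is affine, one picks an effective Cartier divisor $H$ on $X$ with $\pi^*H-D$ effective. Normality of $X$ and $\operatorname{codim}\{x\}\geq 2$ force
$$H^0(X'\setminus E,\pi^*\mathcal O_X(mH))=H^0(X\setminus\{x\},\mathcal O_X(mH))=H^0(X,\mathcal O_X(mH))=H^0(X',\pi^*\mathcal O_X(mH)),$$
so every $f\in H^0(X'\setminus E,\mathcal O_{X'}(mD))\subseteq H^0(X'\setminus E,\pi^*\mathcal O_X(mH))$ automatically satisfies $v_{E_i}(f)\geq -m\,v_{E_i}(\pi^*H)$ for every component $E_i$ of $E$. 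Taking $r$ so that the coefficient of each $E_i$ in $D+rE$ exceeds that in $\pi^*H$ then yields $f\in H^0(X',\mathcal O_{X'}(m(D+rE)))$, with $r$ manifestly independent of $m$. If you want to salvage your line of argument, this comparison with a pullback from the base is the step to substitute for the nonexistent anti-ample divisor supported on $E$.
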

\begin{proof}
For any divisor $L$ on $X'$, identify 
\begin{equation}\label{identify}H^0(X',\mathcal O_{X'}(L))=\{f\in K(X)\ :\ {\rm div}(f)+L\geq 0\}.\end{equation}
With this identification, recall that $$H^0(X'\setminus E,\mathcal O_{X'}(mD))=\bigcup_{i\geq0}H^0(X',\mathcal O_{X'}(mD+iE)).$$ 
There exists an inclusion $\mathcal O_{X'}(D)\subseteq\pi^*\mathcal O_X(H)$ for some effective Cartier (sufficiently ample) divisor $H$ on $X$. Since $X$ is normal, rational functions defined outside subsets of codimension two or more extend and so $$H^0(X'\setminus E,\pi^*\mathcal O_X(mH))=H^0(X\setminus\{x\},\mathcal O_X(mH))=H^0(X,\mathcal O_X(mH))=H^0(X',\pi^*\mathcal O_X(mH)).$$ 
\noindent For all non-negative $i$ and $m$, the following natural inclusions are then equalities:
$$H^0(X',\mathcal O_{X'}(\pi^*mH))\subseteq H^0(X',\mathcal O_{X'}(\pi^*mH+iE))\subseteq H^0(X'\setminus E,\mathcal O_{X'}(\pi^*mH)).$$ 
Choose $r$ so that the order of $D+rE$ along any irreducible component of $E$ is strictly greater than the order of $\pi^*H$ along the same component. 
For $s>r$, that ${\rm div}(f)+m(D+sE)$ is effective implies that $$f\in H^0(X',\mathcal O_{X'}(m(D+sE)))\subseteq H^0(X',\mathcal O_{X'}(m(\pi^*H+sE)))=H^0(X',\mathcal O_{X'}(\pi^*mH)),$$ therefore ${\rm div}(f)+\pi^*mH$ is also effective. Looking at the orders along the components of $E$, because of our choice of $r$, we actually get $f\in H^0(X',\mathcal O_{X'}(m(D+rE)))$.
\end{proof}

\par Consider a complete flag of subvarieties of $X'$, i.e., each is a divisor in the previous subvariety:
$$Y_{\bullet}:\ X'=Y_0\supset E_1=Y_1\supset\ldots\supset Y_n=\{y\}$$ such that each $Y_i$ is non-singular at the generic point of $Y_{i+1}$. Recall that $E_1$ is a component of $E$, the reduced fiber of $\pi$ over $x$. Following \cite[1.1]{LM}, for any divisor $D$ on $X'$, we construct a valuation like function $$\nu=\nu_D=(\nu_1,\ldots,\nu_n):H^0(X',\mathcal O_{X'}(D))\to\mathbb Z^{n}\cup\{\infty\}$$ having the following properties:

\begin{align*}
(i).\ & \nu(s)=\infty\mbox{ if, and only if, }s=0.\\
(ii).\ & \nu(s+s')\geq\min\{\nu(s),\nu(s')\}\mbox{ for any }s,s'\in H^0(X',\mathcal O_{X'}(D)).\\
(iii).\ & \nu_{D_1+D_2}(s_1\otimes s_2)=\nu_{D_1}(s_1)+\nu_{D_2}(s_2)\mbox{ for any divisors }D_i\mbox{ on }X'\mbox{ and any }s_i\in H^0(\mathcal O_{X'}(D_i)).
\end{align*} 

Each $\nu_i$ is constructed by studying orders of vanishing along the terms of the flag $Y_{\bullet}$. For $s\in H^0(X',\mathcal O_{X'}(D))$, define first $\nu_1(s)$ as the order of vanishing of $s$ along $E_1$. If $f$ is the rational function corresponding to $s$ via the identification (\ref{identify}), then $\nu_1(s)$ is the coefficient of $E_1$ in ${\rm div}(f)+D$. A non-unique local equation for $Y_1$ in $Y_0$ then determines a section $$\overline s\in H^0(Y_1,\mathcal O_{Y_0}(D-\nu_1(s)Y_1)|_{Y_1})$$ having a uniquely defined order of vanishing along $Y_2$ that we denote $\nu_2(s)$ and the construction continues inductively. More details can be found in \cite[1.1]{LM}. Note that the $\nu_i$ assume only non-negative values.
\par For any divisor $D$ on $X'$ and for $m\geq 0$, with $r$ given by Lemma \ref{linbd1}, let  

\begin{align}
& I'_m =\nu_{m(D+rE)}(H^0(X',\mathcal O_{X'}(m(D+rE)))),\\ 
& I_m =\nu_{m(D+rE)}(H^0(X',\mathcal O_{X'}(mD))),\\ 
&\label{B} B_m=I'_m\setminus I_m.
\end{align}

By construction, $I'_{\bullet}=\bigcup_{m\geq0}(I'_m,m)$ and $I_{\bullet}=\bigcup_{m\geq0}(I_m,m)$ are semigroups of $\mathbb N^{n+1}$. We abuse notation in identifying the sets $I_m$ and $(I_m,m)$. We will soon prove (Lemma \ref{promise}) that $$\#B_m=\dim\frac{H^0(X',\mathcal O_{X'}(m(D+rE)))}{H^0(X',\mathcal O_{X'}(mD))}=\hl(mD).$$
Assuming this result, we aim to show that $\vol_x(D)$ is the normalized volume of the not necessarily convex polytope $B$ obtained as the difference of two nested polytopes arising as Okounkov bodies of some sub-semigroups of $I'_{\bullet}$ and $I_{\bullet}$ respectively, each satisfying the conditions \cite[(2.3)-(2.5)]{LM}. For a semigroup $\Gamma_{\bullet}\subseteq \mathbb N^{n+1}$ with $\Gamma_m=\Gamma_{\bullet}\cap (\mathbb N^n\times\{m\})$, these conditions are as follows:

\begin{align*}
\mbox{(Strictness): }& \Gamma_0=\{0\}.\\
\mbox{(Boundedness): }& \Gamma_{\bullet}\subseteq\Theta_{\bullet},\mbox{ for some semigroup }\Theta_{\bullet}\subseteq \mathbb N^{n+1} \mbox{ generated by the finite set }\Theta_1.\\
\mbox{(Denseness): }& \Gamma_{\bullet}\mbox{ generates }\mathbb Z^{n+1}\mbox{ as a group.}
\end{align*}
A semigroup $\Gamma_{\bullet}$ satisfying the above conditions spans the cone $$\Sigma(\Gamma)\subset\mathbb R^{n+1}_{\geq0}$$ which determines the convex polytope (the associated Okounkov body) $$\Delta(\Gamma)=\Sigma(\Gamma)\cap(\mathbb R^n\times\{1\}).$$ By \cite[Prop.2.1]{LM}, with the volume on $\mathbb R^n$ normalized so that the volume of the unit cube is one,
$$\vol_{\mathbb R^n}(\Delta(\Gamma))=\lim_{m\to\infty}\frac{\#\Gamma_m}{m^n}.$$
Our first challenge is to show that $B_m$ (see \ref{B}) is linearly bounded with $m$. With Lemma \ref{promise} still to prove, we show the following apparently stronger independent result:

\begin{lem}\label{linbd2}For a divisor $D$ on non-singular $X'$, with $r$ as in Lemma \ref{linbd1}, there exists $N>0$ such that for all $i$ and $m$, with valuation like functions on $H^0(X',\mathcal O_{X'}(m(D+rE)))$ as above, we have $\nu_i(s)\leq mN$ for any $s\in H^0(X',\mathcal O_{X'}(m(D+rE)))\setminus H^0(X',\mathcal O_{X'}(mD))$, e.g., $\nu_{m(D+rE)}(s)\in B_m$.\end{lem}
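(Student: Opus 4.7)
The plan is to reduce, via compactification (Lemma \ref{proj}), to a bound on the valuations $\nu_i$ of sections of a line bundle on a projective variety, which is the type of estimate carried out in \cite{LM}. The main obstacle is that $X'$ is only projective over the affine $X$: sections of $\mathcal{O}_{X'}(m(D+rE))$ form an infinite-dimensional vector space and extend to a projective compactification $\overline{X'}$ only after allowing poles of a priori unbounded order along the boundary $F = \overline{X'} \setminus X'$.

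First I would invoke Lemma \ref{proj} to fix a projective compactification $\overline{\pi} : \overline{X'} \to \overline{X}$ with $\overline{D}$ extending $D$, assuming $\overline{X'}$ non-singular after a further blowup if necessary. Each $E_i$ is proper (lying in the fiber $\pi^{-1}(x)$) and so remains a prime divisor on $\overline{X'}$; the flag $Y_{\bullet}$ sits inside $\overline{X'}$ and the valuations $\nu_i$, being determined by germs along the flag, agree whether computed on $X'$ or on $\overline{X'}$. Fix a very ample divisor $\overline{A}$ on $\overline{X'}$ large enough that $\overline{A} - \overline{D} - r\overline{E}$, $\overline{A} - F$, and $\overline{A} - E_j$ for every $j$ are all effective.

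For $s \in H^0(X', m(D+rE)) \setminus H^0(X', mD)$ corresponding to $f \in K(X')$, the effective divisor $\Delta_s := \mathrm{div}_{X'}(f) + m(D+rE)$ has coefficient $\nu_1(s)$ along $E_1$. Extending $f$ to $\overline{X'}$, let $c_s \geq 0$ be minimal so that $\overline{\Delta}_s + c_s F$ is effective on $\overline{X'}$. Since $f$ is a global rational function on the projective $\overline{X'}$, we have $\overline{\mathrm{div}(f)} \cdot \overline{A}^{n-1} = 0$, and intersecting the effective divisor $\overline{\Delta}_s + c_s F$ (which contains $\nu_1(s) E_1$ as a component) with $\overline{A}^{n-1}$ gives
$$\nu_1(s) \cdot (E_1 \cdot \overline{A}^{n-1}) \;\leq\; m \cdot (\overline{D} + r\overline{E}) \cdot \overline{A}^{n-1} + c_s \cdot (F \cdot \overline{A}^{n-1}).$$

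The key remaining step, and the hard part, is to bound $c_s$ linearly in $m$, uniformly in $s$. The hypothesis $s \notin H^0(X', mD)$ forces $f$ to have a pole of order $\Theta(m)$ along some component $E_j$, and combining this with the $\tau$-multiplication from the proof of Lemma \ref{linbd1} --- which identifies $s$ with an element of $H^0(X, mH)$ --- and the relation $\overline{\mathrm{div}(f)} \cdot \overline{A}^{n-1} = 0$ in a compatible compactification $\overline{X}$, the pole order of $f$ along $F$ is controlled linearly in $m$ by the poles of $f$ along $E$. This yields $c_s \leq mN_1$ and hence $\nu_1(s) \leq mN$ for a uniform $N$. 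The bounds on $\nu_i(s)$ for $i \geq 2$ then follow by induction down the flag, restricting to $Y_{i-1}$ and replacing $\overline{A}$ by $\overline{A}|_{Y_{i-1}}$, which remains ample.
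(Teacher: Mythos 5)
Your reduction of the problem to a linear bound $N_1$ for $\nu_1$, with the bounds for $\nu_2,\dots,\nu_n$ then obtained by induction down the flag using intersection numbers on the projective $Y_1=E_1$, is sound and is exactly how the paper organizes the proof. The gap is in the step you yourself flag as the hard part: the claim that for $s\notin H^0(X',\mathcal O_{X'}(mD))$ the boundary pole order $c_s$ is bounded by $mN_1$ uniformly in $s$. This is false. Take any $u\in\mathcal O_X(X)$ with $u(x)\neq 0$ and of arbitrarily large degree with respect to your compactification $\overline X$. Since $u$ is a unit at $x$, one has $\mathrm{ord}_{E_j}(u)=0$ for every component $E_j$ of the fiber, so replacing $f$ by $uf$ preserves membership in $H^0(X',\mathcal O_{X'}(m(D+rE)))\setminus H^0(X',\mathcal O_{X'}(mD))$ and leaves every $\nu_i(s)$ unchanged, while making the pole of $uf$ along $F$ (hence $c_s$) as large as you please for fixed $m$. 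Consequently your displayed intersection inequality, which is correct, gives no information: the term $c_s\,(F\cdot\overline A^{\,n-1})$ cannot be controlled. The underlying reason is that the lemma is a purely local statement at $x$ --- it compares the valuation $\mathrm{ord}_{E_1}$ with the valuations $\mathrm{ord}_{E_j}$, $j\neq 1$, all centered at $x$ --- and a global degree computation on a compactification is insensitive to this, since anything can be absorbed into poles at infinity.

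The missing ingredient is Izumi's theorem (in Rees's algebraic form): for divisorial valuations $\mathrm{ord}_{E_i}$ with common center $x$ on the normal $X$, there is a constant $R>1$ with $R>c_i/c_j>1/R$ for the orders $c_i$ of any regular function vanishing at $x$. The paper clears denominators by choosing a rational function $g$ with $\mathrm{div}(g)-D-rE\geq 0$ (possible since $X$ is affine), applies Izumi to the regular function $f\cdot g^m$, and concludes that if $\nu_1(s)>mN_1$ with $N_1=R(r+\rho-g_1)$, then $s$ vanishes to order at least $mr$ along \emph{every} $E_i$ and hence already lies in $H^0(X',\mathcal O_{X'}(mD))$, contradicting the hypothesis. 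Some such local comparison of the valuations along the different components of $E$ is indispensable; without it your argument cannot close.
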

\begin{proof}
Let $H$ be a relatively ample integral divisor on $X'$ and assume we have shown that there exists such a linear bound $N_1$ for $\nu_1$. Since $Y_1$ is projective, as in \cite[Lem.1.10]{LM}, there exists $N_2$ such such that for all real number $0\leq a<N_1$ $$((D+rE-aY_1)|_{Y_1}-N_2Y_2)\cdot H^{n-2}<0.$$ This provides the linear bound for $\nu_2$ and one iterates this construction for all $i>1$. Letting $N$ be the maximum of all $N_i$ completes the proof. We still have to construct $N_1$. The idea here is to apply a theorem of Izumi that shows that a regular function with a high order of vanishing along $E_1$ also vanishes to high order along the other $E_i$. The technical part is to see how to apply this to rational functions giving sections of $\mathcal O_{X'}(m(D+rE))$. Since $X$ is assumed to be affine, there exists a rational function $g$ such that $$G=_{\rm def}{\rm div}(g)-D-rE$$ is effective on $X'$. With the identification in (\ref{identify}), for any $f$ in $H^0(X',\mathcal O_{X'}(m(D+rE)))$, the a priori rational function $f\cdot g^m$ is regular on $X'$. Let 
$${\rm div}(f\cdot g^m)=C+\sum_{i=1}^kc_iE_i$$ with $E_i$ the components of the reduced fiber $E$ over $x$, with $c_i\geq 0$ for all $i$ and $C$ an effective divisor without components over $x$. There exists $R>1$ such that if $c_1>0$, then $$R>\frac{c_i}{c_j}>\frac 1R$$ for all $i,j$. This is an analytic result of Izumi (\cite{Iz}), extended to arbitrary characteristic by Rees (\cite{R}). It follows that even when $c_1=0$,
$${\rm div}(f\cdot g^m)=C+\sum_{i=1}^kc_iE_i\geq C+\frac{c_1}R\cdot E.$$
If $s$ is the regular section associated to $f$, i.e., its zero locus is $Z(s)={\rm div}(f)+m(D+rE)$, then the above inequality can be rewritten as
$$Z(s)=C-mG+\sum_{i=1}^kc_iE_i\geq C-mG+\frac{c_1}R\cdot E.$$
If $\rho$ is the maximal coefficient of any $E_i$ in $G$ and $g_1$ is the coefficient of $E_1$, we set $N_1=R(r+\rho-g_1)$ and see that when $\nu_1(s)=c_1-mg_1>mN_1$, then $Z(s)\geq mrE$ showing that $$s\in H^0(X',\mathcal O_{X'}(mD))\subseteq H^0(X',\mathcal O_{X'}(m(D+rE))).$$
\end{proof}

We now prove that $B_m$ has the expected cardinality.

\begin{lem}\label{promise}With notation as above, for all $m\geq 0$, we have $\#B_m=\hl(mD)$.\end{lem}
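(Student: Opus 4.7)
The plan is to combine the two identifications of $\hl(mD)$ already at our disposal with the standard fact that a valuation-like function is ``dimension-preserving.''

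First I would use Remark \ref{r1}(ii), which is applicable because $X$ is affine, together with Lemma \ref{linbd1} to rewrite
$$\hl(mD)=\dim\frac{H^0(X',\mathcal O_{X'}(m(D+rE)))}{H^0(X',\mathcal O_{X'}(mD))}.$$
Indeed, the nested chain $H^0(X',\mathcal O_{X'}(mD+kE))$ stabilizes to $H^0(X',\mathcal O_{X'}(m(D+rE)))$ for $k\geq mr$, by the proof of Lemma \ref{linbd1} (looking at orders along each component of $E$). So the statement to prove reduces to showing that
$$\#B_m=\dim V'_m-\dim V_m,\qquad V'_m:=H^0(X',\mathcal O_{X'}(m(D+rE))),\ V_m:=H^0(X',\mathcal O_{X'}(mD)).$$

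Next I would invoke the general lemma that for any valuation-like function $\nu:V\to\mathbb Z^n\cup\{\infty\}$ satisfying properties (i) and (ii) on a finite-dimensional vector space $V$, one has $\#\nu(V\setminus\{0\})=\dim V$. This is standard (see the setup in \cite[\S 1.1]{LM}): order $\mathbb Z^n$ lexicographically, and for each $\alpha$ set $V_{\geq\alpha}=\{v\in V:\nu(v)\geq\alpha\}$, which is a subspace by property (ii). The quotients $V_{\geq\alpha}/V_{>\alpha}$ have dimension at most one, with dimension one exactly when $\alpha\in\nu(V\setminus\{0\})$, so the dimension counts match after summing over $\alpha$.

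Applying this lemma to $V'_m$ with $\nu=\nu_{m(D+rE)}$ yields $\#I'_m=\dim V'_m$. Since $rE$ is effective, $V_m$ sits inside $V'_m$ as a subspace, and restricting $\nu_{m(D+rE)}$ to $V_m$ still satisfies (i) and (ii); so the same lemma gives $\#I_m=\dim V_m$. The inclusion $I_m\subseteq I'_m$ is immediate from the inclusion of sections, hence
$$\#B_m=\#I'_m-\#I_m=\dim V'_m-\dim V_m=\dim(V'_m/V_m)=\hl(mD),$$
which finishes the proof. The only point requiring a moment's care is that $\nu_{m(D+rE)}$, evaluated on a section $s\in V_m\subseteq V'_m$, uses the divisor $m(D+rE)$ rather than $mD$; but this merely shifts the component $\nu_1$ by $mr$ (and leaves the later coordinates unchanged once one follows the inductive definition along the flag), so the injectivity-on-graded-pieces argument goes through unchanged. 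I expect no serious obstacle: the content of the statement is really the dimension-counting lemma for valuation-like functions, and all the geometric work was already absorbed into Lemmas \ref{linbd1} and \ref{linbd2}.
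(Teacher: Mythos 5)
The reduction to showing $\#B_m=\dim(V'_m/V_m)$ is fine and agrees with the paper, but the counting step after that has a genuine gap: the spaces $V'_m=H^0(X',\mathcal O_{X'}(m(D+rE)))$ and $V_m=H^0(X',\mathcal O_{X'}(mD))$ are \emph{infinite-dimensional} ($X$ is affine in this subsection, so these are modules over the whole coordinate ring of $X$). The lemma $\#\nu(V\setminus\{0\})=\dim V$, which you correctly state only for finite-dimensional $V$, therefore gives $\infty=\infty$ for both $I'_m$ and $I_m$, and the subtraction $\#I'_m-\#I_m=\dim V'_m-\dim V_m$ is $\infty-\infty$. Worse, the relative statement you actually need, namely $\#\bigl(\nu(V')\setminus\nu(V)\bigr)=\dim(V'/V)$ for a subspace $V\subseteq V'$ with finite-dimensional quotient, is \emph{false} without further hypotheses: take $V'=k[x]$ with $\nu=\mathrm{ord}_0$, and $V=\{p:p(1)=0\}$, which is spanned by the elements $x^n-x^{n+1}$. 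Then $\nu(V)=\nu(V')=\mathbb N$, so the difference of value sets is empty, yet $\dim(V'/V)=1$. So the equality is not a formal consequence of the filtration argument; some geometric input is required to rule out exactly this kind of behavior.

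That input is Lemma \ref{linbd2} (the uniform bound $\nu_i(s)\leq mN$ for $s\in V'_m\setminus V_m$), which your proof records but never actually uses. The paper's proof deploys it as follows: compactify $\pi$ to $\overline\pi:\overline{X'}\to\overline X$, replace $V'_m$ and $V_m$ by the finite-dimensional spaces $W'_t=H^0(\overline{X'},\mathcal O(tH+D+rE))$ and $W_t=H^0(\overline{X'},\mathcal O(tH+D))$, and use the boundedness to show that for $t\gg0$ the finite sets $\nu(W'_t)\setminus\nu(W_t)$ stabilize to $B$; only then is \cite[Lem.1.3]{LM} applied, legitimately, to $W'_t$ and $W_t$, together with a Serre-vanishing identification of $W'_t/W_t$ with $H^1_{\{x\}}$. (Alternatively one can argue directly on $V'_m\supseteq V_m$ by Gaussian elimination against $V_m$, but there too Lemma \ref{linbd2} is what forces the elimination to terminate rather than push valuations off to infinity as in the counterexample above.) As written, your proof skips the one step that carries the mathematical content of the lemma.
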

\begin{proof}Without loss of generality, we can assume that $m=1$. Denote $B=_{\rm def}B_1$. By Lemma \ref{linbd2}, the set $B$ is bounded and therefore finite. The idea is to reduce the problem to the projective setting where we apply \cite[Lem.1.3]{LM}.
\par Recall that $X$ is assumed to be affine. Let $\overline{\pi}:\overline{X'}\to\overline{X}$ be a compactification of $\pi$ such that $\overline{X}\setminus X$ is the support of an ample divisor $H$. By abuse, we use the same notation for $H$ and its pullback and we use the same notation for $D$ and its closure in $\overline{X'}$. Note that the pullback of $H$ is big and semi-ample. For all $m\geq0$, the natural inclusion 
$$H^0(\overline{X'},\mathcal O_{\overline{X'}}(m(tH+D+rE)))\subset H^0(X',\mathcal O_{X'}(m(D+rE)))$$ is compatible with the valuation like functions $\nu_{m(tH+D+rE)}$ and $\nu_{m(D+rE)}$ that we construct when working over $\overline{X'}$ and $X'$ respectively with the flag $Y_{\bullet}$ and the obvious compactification that replaces $Y_0=X'$ by $\overline{X'}$ and leaves the remaining terms unchanged. We have the same compatibility for $\nu_{m(tH+D)}$ and $\nu_{mD}$. Note also that 
$$H^0(X',\mathcal O_{X'}(D+rE))=\bigcup_{t\geq0}H^0(\overline{X'},\mathcal O_{\overline{X'}}(tH+D+rE))$$ and a similar statement holds for $D$. When $t$ is sufficiently large so that 
\begin{equation}\label{h1van}H^1(\overline{X},\overline{\pi}_*\mathcal O_{\overline{X'}}(D)\otimes\mathcal O_{\overline X}(tH))=0,\end{equation}
excision and the natural cohomology sequence on $\overline{X}$ show that
\begin{equation}\label{setdif}H^1_{\{x\}}(X,\pi_*\mathcal O_{X'}(D))\simeq \frac{H^0(\overline{X'},\mathcal O_{\overline{X'}}(tH+D+rE))}{H^0(\overline{X'},\mathcal O_{\overline{X'}}(tH+D))}.\end{equation}
Note that the $r$ provided by Lemma \ref{linbd1} also works to prove $$H^0(\overline{X'},\mathcal O_{\overline{X'}}(tH+D+rE))=H^0(\overline{X'}\setminus E,\mathcal O_{\overline{X'}}(tH+D)).$$ Denote 
\begin{align*}
W'_t& =H^0(\overline{X'},\mathcal O_{\overline{X'}}(tH+D+rE))\\
W_t& =H^0(\overline{X'},\mathcal O_{\overline{X'}}(tH+D))\\
W'& =\bigcup_{t\geq 0}W'_t=H^0(X',\mathcal O_{X'}(D+rE))\\
W& =\bigcup_{t\geq 0}W_t=H^0(X',\mathcal O_{X'}(D))
\end{align*}
With the intersection taking place in $W'$, note that $$W_t=W'_t\cap W.$$
Let $t$ be large enough so that the vanishing (\ref{h1van}) takes place and such that $\nu(W'_t)$ contains the set $\mathcal N$ all elements in $\nu(W')$ satisfying the bound in Lemma \ref{linbd2} and such that $\nu(W_t)$ contains all elements in $\nu(W)\cap\mathcal N$. We show that $$\nu(W'_t)\setminus\nu(W_t)=\nu(W')\setminus\nu(W)=B.$$ Since $B\subset\mathcal N$ by Lemma \ref{linbd2}, all its elements are in $\nu(W'_t)$ by the choice of $t$ and are not in $\nu(W_t)\subset\nu(W)$. Therefore $B\subseteq\nu(W'_t)\setminus\nu(W_t)$. Again by the choice of $t$, any element in $ \nu(W'_t)\setminus\nu(W_t)$ that is not in $B$ is also not in $\mathcal N$. Let $\sigma\in W'_t$ such that $\nu(\sigma)\in(\nu(W'_t)\setminus\nu(W_t))\setminus B.$ Then $\nu(\sigma)\in\nu(W'_t)\setminus\mathcal N$ and again by Lemma \ref{linbd2} we obtain $\sigma\in W$, hence $\sigma\in W\cap W'_t=W_t$ and $\nu(\sigma)\in\nu(W_t)$ which is impossible. 
\par Now $\#B=\#(\nu(W'_t)\setminus\nu(W_t))=\#\nu(W'_t)-\#\nu(W_t)=\hl(D)$ by (\ref{setdif}) and by \cite[Lem.1.3]{LM}, a result that shows $\#\nu(W'_t)=\dim W'_t$ and the analogous result for $W_t$.
\end{proof}

We next construct sub-semigroups $\Gamma'_{\bullet}\subset I'_{\bullet}$ and $\Gamma_{\bullet}\subset I_{\bullet}$, each satisfying the properties \cite[(2.3)-(2.5)]{LM} and such that $B_m=\Gamma'_m\setminus\Gamma_m$ for all $m$.  With notation as in the proof of Lemma \ref{promise} and with $t$ sufficiently large so that $tH+D$ is big, let 
$$S_m=\nu_{m(tH+D+rE)}(H^0(\overline{X'},\mathcal O_{\overline{X'}}(m(tH+D)))).$$
If we pick the flag $Y_{\bullet}$ so that $Y_n=\{y\}$ is not contained in any $E_i$ for $i>1$ (we thank T. de Fernex for suggesting this choice), then 
$$S_m=\mbox{translation of }\nu_{m(tH+D)}(H^0(\overline{X'},\mathcal O_{\overline{X'}}(m(tH+D))))\mbox{ by }(mr,0,0\ldots,0,m)\in\mathbb N^{n+1}.$$
That $S_{\bullet}$ satisfies the conditions \cite[(2.3)-(2.5)]{LM} follows by \cite[Lem.2.2]{LM}. By \cite[Lem.1.10]{LM}, there exists a linear bound for $S_{\bullet}$ in the sense of Lemma \ref{linbd2}. Let $N$ be the greatest of the two linear bounds provided by Lemmas \ref{linbd2} and \cite[Lem.1.10]{LM}. If $x_i$ denotes the $i$-th coordinate on $\mathbb N^n$, let 
$$\Gamma_m=\{(x_1,\ldots,x_n)\in I_m\ :\ x_i\leq mN\ \mbox{ for all }1\leq i\leq n\}$$ and construct $\Gamma'_{\bullet}$ similarly. By construction, these semigroups satisfy the strictness (\cite[(2.3)]{LM}) and boundedness (\cite[(2.4)]{LM}) conditions. They also each generate $\mathbb Z^n$ as a group because they contain $S_{\bullet}$ which does. By Lemma \ref{linbd2}, we have $B_m=\Gamma'_m\setminus\Gamma_m$. Letting $B=\Delta(\Gamma')\setminus\Delta(\Gamma)$, we prove:

\begin{prop}\label{limconv}With notation as above, for a volume function on $\mathbb R^n$ normalized so that the volume of the unit cube is one,
$$\vol_x(D)=n!\cdot\vol_{\mathbb R^n}(B).$$\end{prop}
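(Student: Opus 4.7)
The plan is to combine the three inputs already in place: (a) the identification $\#B_m = \hl(mD)$ from Lemma \ref{promise}; (b) the set-theoretic identity $B_m = \Gamma'_m \setminus \Gamma_m$ established at the end of the construction using the linear bound from Lemma \ref{linbd2}; and (c) \cite[Prop.2.1]{LM}, which computes the asymptotic counts of $\Gamma_m$ and $\Gamma'_m$ as normalized Euclidean volumes of the associated Okounkov bodies.

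First, I would note that since $\Gamma_{\bullet}$ and $\Gamma'_{\bullet}$ both satisfy the conditions \cite[(2.3)-(2.5)]{LM}, \cite[Prop.2.1]{LM} applies directly and gives
$$\lim_{m\to\infty}\frac{\#\Gamma'_m}{m^n}=\vol_{\mathbb R^n}(\Delta(\Gamma')),\qquad \lim_{m\to\infty}\frac{\#\Gamma_m}{m^n}=\vol_{\mathbb R^n}(\Delta(\Gamma)).$$
Because the containment $\Gamma_{\bullet}\subseteq\Gamma'_{\bullet}$ is level-wise, we have $\Gamma_m\subseteq\Gamma'_m$ for every $m$, so $\#B_m=\#\Gamma'_m-\#\Gamma_m$ and both limits above exist independently. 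Subtracting yields
$$\lim_{m\to\infty}\frac{\#B_m}{m^n}=\vol_{\mathbb R^n}(\Delta(\Gamma'))-\vol_{\mathbb R^n}(\Delta(\Gamma)).$$

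Next, I would combine with $\hl(mD)=\#B_m$ from Lemma \ref{promise} to conclude that the sequence $\hl(mD)/(m^n/n!)$ actually converges and equals $n!\cdot(\vol_{\mathbb R^n}(\Delta(\Gamma'))-\vol_{\mathbb R^n}(\Delta(\Gamma)))$. In particular the $\limsup$ in the definition of $\vol_x(D)$ is a genuine limit (this is how one obtains Corollary \ref{truelim} that is invoked earlier in the text). Finally, using the containment $\Delta(\Gamma)\subseteq\Delta(\Gamma')$ of convex bodies, the difference of volumes is exactly the Euclidean volume of the (not necessarily convex) set-theoretic difference:
$$\vol_{\mathbb R^n}(\Delta(\Gamma'))-\vol_{\mathbb R^n}(\Delta(\Gamma))=\vol_{\mathbb R^n}(\Delta(\Gamma')\setminus\Delta(\Gamma))=\vol_{\mathbb R^n}(B).$$
Assembling these identities gives $\vol_x(D)=n!\cdot\vol_{\mathbb R^n}(B)$.

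The substantive content is all carried by the preceding lemmas (Lemmas \ref{linbd1}, \ref{linbd2}, \ref{promise}) together with \cite[Prop.2.1]{LM}; the only thing the proof itself needs to do is assemble them, and the single point requiring care is the level-wise containment $\Gamma_m\subseteq\Gamma'_m$, which is automatic from the definition of the truncations by the common linear bound $N$. The potential obstacle I would watch for is the compatibility of the two Okounkov-body limits under subtraction: because these are genuine limits (not merely limsups) and because $B_m$ is the honest difference of the finite sets $\Gamma'_m$ and $\Gamma_m$, term-by-term subtraction is legitimate and no additional argument is required.
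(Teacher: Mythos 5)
Your proposal is correct and follows the same route as the paper: identify $\hl(mD)=\#B_m=\#\Gamma'_m-\#\Gamma_m$ via Lemma \ref{promise} and the construction, apply \cite[Prop.2.1]{LM} to each of $\Gamma'_{\bullet}$ and $\Gamma_{\bullet}$, and subtract, using the nesting $\Delta(\Gamma)\subseteq\Delta(\Gamma')$ to convert the difference of volumes into the volume of $B$. The only difference is that you spell out the term-by-term subtraction of the two genuine limits, which the paper leaves implicit.
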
 
\begin{proof}
$$\vol_x(D)=\limsup_{m\to\infty}\frac{\hl(mD)}{m^n/n!}=n!\cdot\limsup_{m\to\infty}\frac{\# B_m}{m^n}=n!\cdot\limsup_{m\to\infty}\frac{\#\Gamma'_m-\#\Gamma_m}{m^n}.$$
By \cite[Prop.2.1]{LM}, $$\lim_{m\to\infty}\frac{\#\Gamma'_m-\#\Gamma_m}{m^n}=\vol_{\mathbb R^n}(\Delta(\Gamma')\setminus\Delta(\Gamma))=\vol_{\mathbb R^n}(B)$$ and the conclusion follows.
\end{proof}

As a corollary we obtain that the $\limsup$ in the definition of $\vol_x$ can be replaced by $\lim$ in the general case.
\begin{cor}\label{truelim} Let $\pi:X'\to X$ be a projective birational morphism onto the complex normal algebraic $X$ of dimension $n\geq 2$ and let $x$ be a point on $X$. Then for any Cartier divisor on $X'$, we have $$\vol_x(D)=\lim_{m\to\infty}\frac{\hl(mD)}{m^n/n!}.$$\end{cor}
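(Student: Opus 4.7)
The plan is to reduce the general situation to the affine log-resolution setting in which Proposition \ref{limconv} is proved, because the proof of that proposition actually establishes an honest limit, not just a limsup.

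For the reduction, I would first use the excision property of local cohomology (Remark \ref{r1}(i)) to shrink $X$ to an affine open neighborhood of $x$ and replace $X'$ by its preimage. Then, after taking $f\colon Y\to X'$ projective birational with $Y$ smooth and $\pi\circ f\colon(Y,E)\to(X,x)$ a log-resolution (obtained by composing resolution of singularities of $X'$ with a further modification making the fiber snc), Proposition \ref{limconv} applies to $f^*D$ on $Y$. The final chain of equalities in its proof invokes \cite[Prop.2.1]{LM}, which gives a genuine limit for $\#(\Gamma'_m\setminus\Gamma_m)/m^n$; combined with Lemma \ref{promise} and the identification $B_m=\Gamma'_m\setminus\Gamma_m$ used right before Proposition \ref{limconv}, this already shows
$$\vol_x(f^*D)=\lim_{m\to\infty}\frac{\hl(mf^*D)}{m^n/n!}$$
as an actual limit.

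Transferring this from $Y$ back to $X'$ is the final step and the main technical hurdle. The two inequalities that appear inside the proof of Lemma \ref{l1}, applied to the torsion-free rank one sheaf $\mathcal{F}=f_*\mathcal{O}_Y$ and combined with Lemma \ref{invpull}(i) to absorb the auxiliary $\pm\pi^*H$ twists, should give the per-$m$ estimate $|\hl(mD)-\hl(mf^*D)|=O(m^{n-1})$, using that cohomology of torsion quotients grows submaximally by \cite[Ex.1.2.33]{L}. Dividing by $m^n/n!$ makes this error vanish, so the existence of the limit transfers to $X'$, with common value $\vol_x(D)$ by Corollary \ref{c1}. The subtlety to watch is that the bounds from Lemma \ref{l1} must be read as pointwise in $m$ rather than as asymptotic limsup comparisons; this is permitted because the torsion sheaves $Q_m$ and $R_m$ appearing there have global sections of order at most $m^{n-1}$, so the ingredients for a per-$m$ estimate are already present and merely need to be assembled without taking limsups prematurely.
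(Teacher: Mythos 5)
Your proposal is correct and follows essentially the same route as the paper: shrink to an affine neighborhood of $x$, pass to a log-resolution $f\colon Y\to X'$, observe that the proof of Proposition \ref{limconv} (via \cite[Prop.2.1]{LM}) yields a genuine limit for $f^*D$, and transfer back using the per-$m$ inequalities from the proof of Lemma \ref{l1} applied to $f_*\mathcal O_Y$. Your explicit remark that those inequalities must be used pointwise in $m$ (with $O(m^{n-1})$ error from the torsion quotients) rather than as limsup comparisons is exactly the content of the paper's phrase that the two sequences ``have the same asymptotic behavior.''
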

\begin{proof}Let $f:\widetilde X\to X'$ be a projective birational morphism such that $\rho=\pi\circ f:\widetilde X\to X$ is a log-resolution of $(X,x)$. Since $\vol_x(D)$ is local around $x$, we can also assume that $X$ is affine. By the proof of Lemma \ref{l1}, the sequences $\hl(mD)$ and $\hl(mf^*D)$ have the same asymptotic behavior. Therefore we have reduced to the setting of Proposition \ref{limconv} whose proof shows the result.\end{proof}

\begin{rem}The natural approach to the problem of expressing $\vol_x(D)$ as a volume of a polytope and replacing $\limsup$ by $\lim$ is to write $B_m=\Gamma'_m\setminus\Gamma_m$ with $\Gamma'_{\bullet}$ and $\Gamma_{\bullet}$ semigroups constructed on compacfications of $\pi$, in the same style as we did for $S_{\bullet}$, and then apply \cite[Thm.2.13]{LM}. This approach is successful when we have an analogue of \cite[Lem.3.9]{LM}, i.e., when we can show that, at least asymptotically, the groups $H^1(\overline X,\overline{\pi}_*\mathcal O_{\overline{X'}}(mD)\otimes\mathcal O_{\overline{X}}(mH))$ vanish for some ample divisor $H$ on a projective compactification $\overline{\pi}$ of $\pi$. We do not know if such a result holds for any Cartier divisor $D$ on $X'$, but we do know it when the graded family $\mathfrak a_m=\pi_*\mathcal O_{X'}(mD)$ is divisorial outside $x$, e.g., when $D$ lies over $x$, or when $D=K_{\widetilde X}+aE$ with $a\in\mathbb Z$ on a log-resolution $\pi:(\widetilde X,E)\to(X,x)$ of a normal isolated singularity.
\end{rem}

\par The content of the classical Fujita approximation statement is that the volume of an integral Cartier divisor $D$ on a projective variety $X$ of dimension $n$ can be approximated arbitrarily close by volumes $\vol(A)$ where $A$ is a nef Cartier $\mathbb Q-$divisor on some blow-up of $\pi:X'\to X$ such that $A\leq\pi^*D$. In the local setting, a step forward in proving a similar result is provided by \cite[Thm.3.8]{LM}, but before discussing it we introduce some notation.

\begin{defn}\label{SFuj} Let $X$ be a quasi-projective variety of dimension $n$ with a fixed point $x$. On $X$, consider a graded sequence of fractional ideal sheaves $\mathfrak a_{\bullet}$ (i.e. $\mathfrak a_0=\mathcal O_X$ and $\mathfrak a_k\cdot\mathfrak a_l\subseteq\mathfrak a_{k+l}$) and define its generalized Hilbert-Samuel multiplicity at $x$ as $$\mult(\mathfrak a_{\bullet})=_{\rm def}\limsup_{p\to\infty}\frac{\dim H^1_{\{x\}}(\mathfrak a_p)}{p^n/n!}.$$
When $\mathfrak a_p=\mathcal I^p$ for all $p$ and for some fixed fractional ideal sheaf $\mathcal I$, denote $$\mult(\mathcal I)=_{\rm def}\mult(\mathfrak a_{\bullet}).$$
\end{defn}

\begin{rem}\label{alglocmult} When $\mathcal I$ is an ideal sheaf in $\mathcal O_X$, we understand the local cohomology group
$H^1_{\{x\}}(\mathcal I)$ as 
$\widetilde{\mathcal I}/\mathcal I$, 
where $\widetilde{\mathcal I}$ is obtained from $\mathcal I$ by removing the $\mathfrak m-$primary components in any primary ideal decomposition, where $\mathfrak m$ is the maximal ideal associated to the point $x$. Algebraically, $$\widetilde{\mathcal I}=(\mathcal I:\mathfrak m^{\infty})=_{\rm def}\bigcup_{p\geq 0}(\mathcal I:\mathfrak m^p).$$ By restricting to an affine neighborhood of $x$, $$H^1_{\{x\}}(\mathcal I)=H^0_{\{x\}}(\mathcal O_X/\mathcal I).$$\end{rem}

\noindent When $\mathfrak a_{\bullet}$ is a graded sequence of $\mathfrak m-$primary ideals in $\mathcal O_X$, then $\mult(\mathfrak a_{\bullet})$ coincides with the multiplicity defined in \cite[Sec.3.2]{LM}. If $\mathcal I$ is an $\mathfrak m-$primary ideal, then $\mult(\mathcal I)$ is the Hilbert-Samuel multiplicity of $\mathcal I$ at $\mathfrak m$.

\begin{rem} When $\pi:X'\to X$ is a projective birational morphism onto a normal quasi-projective variety $X$ of dimension at least two, with a distinguished point $x$ and $D$ is an integral Cartier divisor on $X'$ for which $\mathfrak a_p$ denotes the fractional coherent sheaf $\pi_*\mathcal O_{X'}(pD)$, then by definition $$\vol_x(D)=\mult(\mathfrak a_{\bullet}).$$\end{rem}
\begin{rem}\label{volbl} When $\mathfrak a$ is a fractional ideal sheaf on $X$ and $\mathcal O(1)$ denotes the relative Serre bundle on the blow-up of $X$ along $\mathfrak a$, then using \cite[Lem.5.4.24]{L}, $$\mult(\mathfrak a)=\vol_x(\mathcal O(1)).$$\end{rem}

The Fujita approximation result in \cite[Thm.3.8]{LM} states that for any graded sequence $\mathfrak a_{\bullet}$ of $\mathfrak m-$primary ideals, $$\mult(\mathfrak a_{\bullet})=\lim_{p\to\infty}\frac{\mult(\mathfrak a_p)}{p^n}.$$ Our contribution is noticing that the proof of the result extends to graded sequences of fractional ideals that are divisorial outside $x$ i.e. $\mathfrak a_p|_{X\setminus\{x\}}=\mathcal O_{X\setminus\{x\}}(pL)$ for some Cartier divisor $L$ on $X\setminus\{x\}$ and all $p$. In the language of local volumes, we have the following:

\begin{thm}\label{Fujita} Let $\pi:X'\to X$ be a projective birational morphism onto a normal quasi-projective variety $X$ of dimension $n\geq 2$ with a a fixed point $x$. Let $D=D_1+D_2$ be a sum of integral Cartier divisors on $X'$ such that $D_1$ is $\pi-$trivial outside $x$ and $D_2$ is effective and $\pi-$exceptional. Then $$\vol_x(D)=\lim_{p\to\infty}\frac{\mult(\pi_*\mathcal O_{X'}(pD))}{p^n}.$$
\end{thm}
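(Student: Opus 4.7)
The plan is to follow the strategy outlined in the remark immediately preceding the theorem, adapting the proof of \cite[Thm.3.8]{LM} with the Okounkov--body machinery developed in this subsection. The assumption that $D=D_1+D_2$ with $D_1$ $\pi$-trivial outside $x$ and $D_2$ effective and $\pi$-exceptional is used only to guarantee that the graded family $\mathfrak a_p=\pi_*\mathcal O_{X'}(pD)$ is divisorial outside $x$: indeed, $D_2$ contributes nothing away from $x$, and $D_1$ restricts to the pullback of a fixed Cartier divisor there, so $\mathfrak a_p|_{X\setminus\{x\}}\simeq \mathcal O_{X\setminus\{x\}}(pL)$ for a fixed Cartier divisor $L$.

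First I would pass to a projective compactification $\overline\pi:\overline{X'}\to\overline X$ of $\pi$ via Lemma \ref{proj}, fix a sufficiently ample divisor $H$ on $\overline X$ whose support contains $\overline X\setminus X$, and then choose a log-resolution and a complete flag $Y_\bullet$ exactly as in the discussion leading to Proposition \ref{limconv}. With notation as there, for each $p\geq 1$ I would form the semigroup
$$\Gamma'^{(p)}_\bullet=\bigcup_{m\geq 0}\bigl(\nu_{m(pD+rE)}(\mathfrak a_p^m\cdot\mathcal O_{X'}(rE)),m\bigr),\qquad \Gamma^{(p)}_\bullet=\bigcup_{m\geq 0}\bigl(\nu_{m(pD)}(\mathfrak a_p^m),m\bigr),$$
truncated (as in the construction before Proposition \ref{limconv}) so that they satisfy the strictness, boundedness, and denseness hypotheses \cite[(2.3)-(2.5)]{LM}. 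Rescaling by $1/p$ one obtains nested bounded subsets $\tfrac1p\Delta(\Gamma'^{(p)})\subseteq\Delta(\Gamma')$ and $\tfrac1p\Delta(\Gamma^{(p)})\subseteq\Delta(\Gamma)$, whose complementary differences, by Remark \ref{volbl} and Proposition \ref{limconv}, compute $\widehat{\mult}(\mathfrak a_p)/p^n$ and $\vol_x(D)$ respectively.

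The main obstacle is the technical vanishing analogous to \cite[Lem.3.9]{LM}: I must show that for each $p$ there exists $t_0=t_0(p)$, growing at most linearly in $p$, such that
$$H^1\!\bigl(\overline X,\overline\pi_*\mathcal O_{\overline{X'}}(mpD)\otimes\mathcal O_{\overline X}(tH)\bigr)=0\qquad\text{for all }m\geq 0\text{ and }t\geq mt_0.$$
This is exactly where the divisorial-outside-$x$ hypothesis enters: outside $x$ the sheaf $\overline\pi_*\mathcal O_{\overline{X'}}(mpD)$ is the $mp$-th tensor power of a fixed line bundle up to the contribution of $D_2$, so after twisting by a sufficiently ample multiple of $H$ one reduces the statement to Serre vanishing on $\overline X$ applied to a bounded family parameterised by $m$ and $p$. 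The detail to check is that one can choose $t_0$ linearly in $p$ uniformly in $m$; this is done by combining the Castelnuovo--Mumford regularity estimate for the fixed line bundle with the effective bound provided by Lemma \ref{linbd2}.

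Once the vanishing is in place, the argument follows \cite[Thm.3.8]{LM} essentially verbatim: the vanishing implies $\#\Gamma'^{(p)}_m-\#\Gamma^{(p)}_m=\widehat h^1_x(mpD)$ for $m\gg 0$, so by \cite[Prop.2.1]{LM} applied to both $\Gamma'^{(p)}_\bullet$ and $\Gamma^{(p)}_\bullet$ one gets
$$\frac{\widehat{\mult}(\mathfrak a_p)}{p^n}=n!\cdot\vol_{\mathbb R^n}\!\left(\tfrac{1}{p}\Delta(\Gamma'^{(p)})\setminus\tfrac{1}{p}\Delta(\Gamma^{(p)})\right).$$
The inequality $\widehat{\mult}(\mathfrak a_p)/p^n\leq\vol_x(D)$ follows from the inclusion $\mathfrak a_p^m\subseteq\mathfrak a_{pm}$ (hence of the corresponding semigroups). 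For the reverse inequality one notes that every lattice point of $B=\Delta(\Gamma')\setminus\Delta(\Gamma)$ arising from sections in $\mathfrak a_{pm}$ for some $m$ is eventually captured by $\tfrac1p\Delta(\Gamma'^{(p)})\setminus\tfrac1p\Delta(\Gamma^{(p)})$ as $p\to\infty$, so the truncated differences exhaust $B$ in volume. Combining with Proposition \ref{limconv} gives the desired limit.
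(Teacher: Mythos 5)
Your overall strategy coincides with the paper's: both reduce to the argument of \cite[Thm.3.8]{LM}, using the divisorial-outside-$x$ hypothesis to control the graded family $\mathfrak a_p=\pi_*\mathcal O_{X'}(pD)$ and a vanishing statement in the style of \cite[Lem.3.9]{LM} to count sections projectively. However, there is a genuine gap: you have the two inequalities backwards, and as a result the hard half of the theorem is never proved. From $\mathfrak a_p^m\subseteq\mathfrak a_{pm}$ together with $\widetilde{\mathfrak a_p^m}=\widetilde{\mathfrak a_{pm}}$ (both sheaves agree with $\mathcal O(pmL)$ off $x$) one gets
$$\dim H^1_{\{x\}}(\mathfrak a_p^m)=\dim\frac{\widetilde{\mathfrak a_{pm}}}{\mathfrak a_p^m}\;\geq\;\dim\frac{\widetilde{\mathfrak a_{pm}}}{\mathfrak a_{pm}}=\hl(pmD),$$
hence $\mult(\pi_*\mathcal O_{X'}(pD))/p^n\geq\vol_x(D)$ for every $p$ --- the opposite of what you assert. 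The same reversal occurs in the convex-body picture: the inclusion $\Gamma^{(p)}_m\subseteq\Gamma_{pm}$ shrinks the \emph{subtrahend}, so the difference bodies $\tfrac1p\Delta(\Gamma'^{(p)})\setminus\tfrac1p\Delta(\Gamma^{(p)})$ contain $B$ rather than sit inside it, and their volumes bound $\vol_x(D)$ from above. Your closing ``exhaustion'' argument, which checks that points of $B$ are eventually captured by these differences, therefore only re-derives this trivial lower bound. (This is exactly the phenomenon, emphasized throughout the paper, that $\vol_x$ \emph{decreases} in effective directions contracting to $x$.)

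What actually requires proof is the upper bound $\limsup_{p\to\infty}\mult(\pi_*\mathcal O_{X'}(pD))/p^n\leq\vol_x(D)$, equivalently that the bodies $\tfrac1p\Delta(\Gamma^{(p)})$ fill up $\Delta(\Gamma)$ from the inside as $p\to\infty$. This is precisely the semigroup-theoretic Fujita approximation underlying \cite[Thm.3.8]{LM}, and it is here that the two ingredients you mention must be deployed correctly: the vanishing should be for the powers $\mathcal O_X(pkH)\otimes\mathfrak a_p^k$ as in \cite[Lem.3.9]{LM} (not for $\overline\pi_*\mathcal O_{\overline{X'}}(mpD)=\mathfrak a_{mp}$), and one needs that $\widetilde{\mathfrak a_{pk}}/\mathfrak a_p^k$ is supported at $x$, which is exactly where the hypothesis on $D_1$ and $D_2$ enters the paper's sketch. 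Once the inequality directions are corrected and this approximation step is supplied, your outline does match the paper's proof.
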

\begin{paragraph}{Sketch of proof:}We can assume that $X$ is projective. The conditions on $D_1$ and $D_2$ imply that the fractional ideal sheaves $\mathfrak a_p=\pi_*\mathcal O_{X'}(pD)$ form a graded sequence that is divisorial outside $x$.\end{paragraph}
\par By \cite[Lem.3.9]{LM}, when $D$ lies over $x$, there exists an ample divisor $H$ on $X$ such that for every $p,k>0$, $$H^1(X,\mathcal O_X(pkH)\otimes(\pi_*\mathcal O_{X'}(pD))^k)=0$$ and the subspaces $H^0(X,\mathcal O_X(pH)\otimes\pi_*\mathcal O_{X'}(pD))\subseteq H^0(X,\mathcal O_X(pH))$ determine rational maps $\phi_p:X\to\mathbb PH^0(X,\mathcal O_X(pD))$ that are birational onto their image for all $p>0$. The $\mathfrak m-$primary hypothesis induced by $D$ lying over $x$ is only used to show that $\frac{\mathfrak a_1^{kp}}{\mathfrak a_p^k}$ is supported at $x$ for all $k,p>0$. But this conclusion also holds if the graded sequence $\mathfrak a_{\bullet}$ is divisorial outside $x$.
\par The proof of our result is then an almost verbatim copy of \cite[Thm.3.8]{LM}, noting that $\widetilde{\mathfrak a_p^k}=\widetilde{\mathfrak a_{pk}}$ for all $p,k>0$ and using the short exact sequences 
$$0\to \mathfrak a_p^k\to \widetilde{\mathfrak a_{pk}}\to H^1_{\{x\}}(\mathfrak a_p^k)\to 0.$$ Recall that $\widetilde{\mathfrak a}$ is obtained from $\mathfrak a$ by removing any $\mathfrak m-$primary components from any primary ideal decomposition. It is the same as $i_*i^*\mathfrak a$, where $i:X\setminus\{x\}\to X$ is the natural open embedding.

\begin{rem}Using Remark \ref{volbl} and Lemma \ref{invpull}, Theorem \ref{Fujita} implies that $\vol_x(D)$ is the limit of local volumes of $\mathbb Q-$Cartier, nef over $X$ divisors on blow-ups of $X'$, thus realizing the analogy with the global version of the Fujita approximation theorem.\end{rem} 

\begin{rem}\label{r2}The highly restrictive condition on $D$ in our Fujita approximation result is automatic when $\pi$ is an isomorphism outside $x$, which is the case for good resolutions of normal isolated singularities $\pi:(\widetilde X,E)\to(X,x)$. Even when $\pi$ is only a log-resolution of a normal isolated singularity, the divisor $K_{\widetilde X}+E$ satisfies the condition of Theorem \ref{Fujita} since $X\setminus\{x\}$ is non-singular.\end{rem}

For the rest of the subsection we look at $\mult(\mathcal I)$ for $\mathcal I$ an ideal sheaf on normal quasi-projective $X$ of dimension $n$ with a fixed point $x$ and compute two examples. This invariant has been studied in some cases in \cite{CHST}, where an example of an irrational local multiplicity is given, showing that, unlike the $\mathfrak m-$primary case, $\dim H^1_{\{x\}}(\mathcal I^p)$ is generally not asymptotically a polynomial function in $p$.

\begin{rem} Note that the proof of Theorem \ref{Fujita} carries for the graded family $\mathfrak a_p=\mathcal I^p$, therefore the $\limsup$ in the definition of $\mult(\mathcal I)$ can be replaced by $\lim$ and this limit is finite by Remark \ref{volbl} and Proposition \ref{volfin}. Compare \cite[Thm.1.3]{CHST}.\end{rem}

\begin{exap}[Monomial ideals] Let $I$ be a monomial ideal in $\mathbb C[X_1,\ldots,X_n]$ and let $\mathfrak m=(X_1,\ldots,X_n)$ be the irrelevant ideal corresponding to the origin of $\mathbb C^{n}$. Then $$(I^k:\mathfrak m^{\infty})=\bigcap_{i=1}^n(I^k:X_i^{\infty}).$$ 
For an arbitrary monomial ideal $J$, the ideal $(J:X_i^{\infty})$ can be computed as $\varphi_i^{-1}\varphi_i(J)$, where $$\varphi_i:\mathbb C[X_1,\ldots,X_n]\to\mathbb C[X_1,\ldots,\hat X_i,\ldots, X_n]$$ is the evaluation map determined by $\varphi_i(X_j)=X_j$ for $j\neq i$ and $\varphi_i(X_i)=1$. Geometrically, $J$ is determined by the set $A(J)$ of $n-$tuples of non-negative numbers $(a_1,\ldots,a_n)$ such that $X_1^{a_1}\cdot\ldots\cdot X_n^{a_n}$ belongs to $J$. Then $A(J:X_i^{\infty})$ is obtained by taking the integer coordinate points in the pre-image of the image of $A(J)$ via the projection onto the coordinate hyperplane that does not contain the $i-$th coordinate axis. Subsequently, $A(J:\mathfrak m^{\infty})=\bigcap_{i=1}^n A(J:X_i^{\infty})$ and $$\dim H^1_{\mathfrak m}(J)=\#(A(J:\mathfrak m^{\infty})\setminus A(J)).$$ Let $P(J)$ denote the convex span of $A(J)$ in $\mathbb R^{n}$ and let $\widetilde P(J)$ be the polyhedron obtained by intersecting the pre-images of the images of the projection of $P(J)$ onto all the coordinate hyperplanes. Then one checks that $$\widehat{h}^1_{\mathfrak m}(I)=n!\cdot\vol(\widetilde P(I)\setminus P(I)),$$ where the volume used in the right hand side is the euclidean one.
\par Figure \ref{fig:mon} shows how $\frac18(A(I^8:\mathfrak m^{\infty})\setminus A(I^8))$ approximates $\widetilde P(I)\setminus P(I)$ for the monomial ideal $(X^3,XY^3)\subset\mathbb C[X,Y]$. Note that in the two dimensional case, $(I:\mathfrak m^{\infty})$ is principal for any monomial ideal $I$, but this does not necessarily hold in higher dimension.
\begin{figure}[htb]
\begin{center}
\leavevmode
\includegraphics[width=0.4\textwidth]{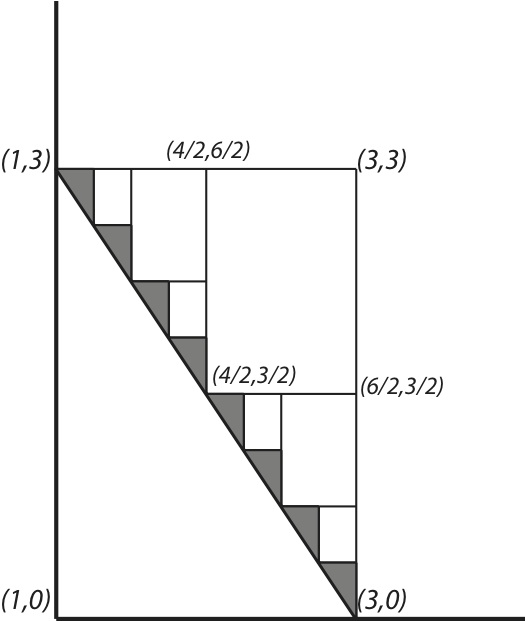}
\end{center}
\caption{$(X^3,XY^3)\subset\mathbb C[X,Y]$}
\label{fig:mon}
\end{figure}

\end{exap}

\begin{exap}[Toric ideals] Let $I$ be a monomial ideal inside the toric algebra $\mathbb C[S_{\sigma}]$ where $S_{\sigma}$ is the semigroup of integral points of a pointed (i.e contains no lines) rational convex cone $\sigma$ of dimension $n$. Let $\tau_1,\ldots,\tau_r$ be the minimal ray generators for $\sigma$. For a subset $V$ of $\sigma$ and $0\leq i\leq r$, let $V_i=\sigma\cap\bigcup_{k\geq 0}(V-k\cdot\tau_i)$. Geometrically, this is the trace left by $V$ inside $\sigma$ by sliding it in the direction of $-\tau_i$. 
\par If $P(I)$ is the convex hull of the set $A(I)$ defined as in the monomial case and if $\widetilde P(I)$ is the intersection of all $P(I)_i$, then $$\widehat{h}^1_{\mathfrak m}(I)=n!\cdot\vol(\widetilde P(I)\setminus P(I)).$$
\end{exap}

\subsection{Vanishing and log-convexity of local volumes}

Our first objective in this subsection is to study the vanishing of local volumes. We begin by recalling a few general facts about exceptional Cartier divisors. If $\pi:X'\to X$ is a projective birational morphism of quasi-projective varieties with $x$ a point on the normal variety $X$, the relative numerical space $N^1(X'/X)_{\mathbb R}$ contains two interesting subspaces. The first and largest of the two is the space of $\pi-$exceptional divisors that we denote ${\rm Exc}(\pi)$. That a relative numerical class is exceptional is well defined numerically is proved by the following result appearing as \cite[Lem.1.9]{BdFF}:

\begin{lem}\label{numdet}Let $\pi:X'\to X$ be a proper birational map with $X$ normal and let $\alpha\in N^1(X'/X)_{\mathbb R}$. Then there exists at most one exceptional $\mathbb R-$Cartier divisor $D$ on $X'$ whose relative numerical class over $X$ is $\alpha$. In particular, when $X'$ is normal and $\mathbb Q-$factorial, the numerical classes of the irreducible $\pi-$exceptional divisors form a basis of ${\rm Exc}(\pi)$.\end{lem}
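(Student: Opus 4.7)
The plan is to reduce the uniqueness assertion to an application of the classical negativity lemma for proper birational morphisms of normal varieties. If $D_1$ and $D_2$ are two exceptional $\mathbb{R}$-Cartier divisors on $X'$ with the same relative numerical class $\alpha$, setting $D := D_1 - D_2$ produces an exceptional, $\mathbb{R}$-Cartier, $\pi$-numerically trivial divisor, and the task is to show that $D = 0$. Numerical triviality over $X$ makes both $D$ and $-D$ $\pi$-nef, and exceptionality gives $\pi_* D = 0$. The negativity lemma (in the Kollár--Mori form: if $-B$ is $h$-nef and $h_*B \geq 0$, then $B \geq 0$) applied once to $D$ and once to $-D$ then forces both to be effective, hence $D = 0$.

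The main obstacle is that the negativity lemma requires a normal source, while the lemma here permits an arbitrary $X'$. I would circumvent this by passing through the normalization $\nu \colon \widetilde{X'} \to X'$: the composite $\widetilde\pi = \pi \circ \nu$ is a proper birational morphism onto the normal $X$, and the pullback $\nu^* D$ is still exceptional, $\mathbb{R}$-Cartier, and $\widetilde\pi$-numerically trivial on the normal $\widetilde{X'}$; the negativity argument of the previous paragraph then forces $\nu^* D = 0$. Since $\nu$ is birational and finite, it sets up a bijection between prime divisors of $X'$ and of $\widetilde{X'}$ (each prime on $X'$ has a unique prime preimage, via the generic fiber being a single point), and this bijection respects the coefficients that appear when passing from $D$ to $\nu^* D$; hence $\nu^* D = 0$ as a Weil divisor forces all coefficients of $D$ to vanish, so $D = 0$ as claimed.

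Once uniqueness is in hand, the second assertion is formal. When $X'$ is normal and $\mathbb{Q}$-factorial, every irreducible exceptional divisor $E_i$ is itself $\mathbb{Q}$-Cartier, so its relative numerical class is a well-defined element of ${\rm Exc}(\pi)$. Spanning is immediate: any $\mathbb{R}$-Cartier exceptional divisor on $X'$ is, by definition, supported on the finitely many prime exceptional divisors, so it is an $\mathbb{R}$-linear combination of the $E_i$, and this descends to a relation between numerical classes. For linear independence, a relation $\sum a_i [E_i] = 0$ in $N^1(X'/X)_{\mathbb{R}}$ exhibits the exceptional $\mathbb{R}$-Cartier divisor $\sum a_i E_i$ as having trivial relative numerical class, so by the uniqueness just established $\sum a_i E_i = 0$ as an $\mathbb{R}$-Weil divisor, and the linear independence of distinct prime divisors forces all $a_i$ to vanish.
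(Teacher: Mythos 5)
The paper itself gives no proof of this lemma: it is quoted verbatim from \cite[Lem.~1.9]{BdFF}, so there is nothing in the text to compare you against. Your argument is, however, exactly the standard proof of that statement: reduce to showing that an exceptional, $\pi$-numerically trivial, $\mathbb{R}$-Cartier divisor vanishes, and apply the negativity lemma twice (to $D$ and to $-D$), using that $\pi_*D=0$. Your reduction to a normal source via the normalization $\nu\colon\widetilde{X'}\to X'$ is the right move, and the deduction of the basis statement in the $\mathbb{Q}$-factorial case from uniqueness is correct and complete.

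The one step you should repair is the claim that $\nu$ induces a bijection between prime divisors of $X'$ and of $\widetilde{X'}$ because ``the generic fiber is a single point.'' A variety can fail to be normal along a divisor (it need not be $R_1$), and over such a prime divisor $E\subset X'$ the normalization can have several prime divisors, each mapping onto $E$; the fiber over the generic point of $E$ is then not a single point, and no coefficient-matching bijection exists. Fortunately you do not need the bijection. Since $\nu$ is finite, birational and surjective, $\nu_*[\widetilde{X'}]=[X']$, and the projection formula for the Cartier divisor $D$ gives $\nu_*[\nu^*D]=D\cdot\nu_*[\widetilde{X'}]=[D]$ as Weil divisors; hence $[\nu^*D]=0$ already forces $[D]=0$, which is the desired conclusion because the divisors in the statement are compared as (Weil) divisors. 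With that substitution the proof is complete. (You might also note explicitly that $\nu^*D$ is $\widetilde{\pi}$-exceptional because $\widetilde{\pi}_*[\nu^*D]=\pi_*[D]=0$, again by pushing forward, rather than by tracking components.)
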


\begin{prop}\label{Qfact} Assume that $X$ and $X'$ are both normal and $\mathbb Q-$factorial. Then $$N^1(X'/X)_{\mathbb R}={\rm Exc}(\pi).$$\end{prop}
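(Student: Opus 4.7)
The plan is to establish the equality by constructing, for every $\mathbb R$-Cartier divisor $D$ on $X'$, a $\pi$-exceptional $\mathbb R$-Cartier divisor that is $\pi$-numerically equivalent to $D$. The natural candidate is
$$D - \pi^*\pi_*D.$$
For this expression to make sense, the Weil $\mathbb R$-divisor $\pi_*D$ on $X$ must be $\mathbb R$-Cartier, so that its pullback is defined: this is precisely where the hypothesis that $X$ is $\mathbb Q$-factorial (and hence $\mathbb R$-factorial after scalar extension) is used. Once $\pi^*\pi_*D$ is available, it is $\pi$-trivial by construction, hence $\pi$-numerically trivial, so the classes of $D$ and $D - \pi^*\pi_*D$ agree in $N^1(X'/X)_{\mathbb R}$.

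The main step is then to check that $D - \pi^*\pi_*D$ is supported on the exceptional locus. I would argue component by component on non-exceptional prime divisors. If $Y \subset X'$ is a prime divisor which is not $\pi$-exceptional, then $\pi(Y)$ is a prime divisor on $X$ and, $X$ and $X'$ being normal, $\pi$ restricts to a birational (in particular, generically an isomorphism) map $Y \to \pi(Y)$ in a neighborhood of the generic point of $Y$. Consequently, the coefficient of $Y$ in $\pi^*\pi_*D$ equals the coefficient of $\pi(Y)$ in $\pi_*D$, which by the definition of $\pi_*$ equals the coefficient of $Y$ in $D$. The non-exceptional coefficients therefore cancel in $D - \pi^*\pi_*D$, giving a $\pi$-exceptional $\mathbb R$-Cartier divisor.

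Combining, the natural map $\mathrm{Exc}(\pi) \to N^1(X'/X)_{\mathbb R}$ is surjective, and Lemma \ref{numdet} gives its injectivity; with $X'$ assumed $\mathbb Q$-factorial, the same lemma upgrades this to saying that the classes of the irreducible $\pi$-exceptional divisors form a basis of $N^1(X'/X)_{\mathbb R}$. The main subtlety to pin down carefully is the coefficient comparison in the previous paragraph: one must justify that ``non-exceptional components of $\pi^*\pi_*D$'' are exactly the strict transforms of components of $\pi_*D$, with the same coefficients. This is a clean local check at the generic point of each non-exceptional prime divisor of $X'$, using that $\pi$ is an isomorphism in codimension one outside the exceptional locus; there are no further obstacles.
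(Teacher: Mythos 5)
Your proposal is correct and takes essentially the same approach as the paper: the identical decomposition $D=\pi^*(\pi_*D)+(D-\pi^*\pi_*D)$, with $\mathbb Q$-factoriality of $X$ used precisely to make the pullback $\pi^*(\pi_*D)$ well defined. The generic-point coefficient comparison you spell out is exactly the step the paper compresses into ``$D-\pi^*\pi_*D$ is clearly exceptional,'' and your appeal to Lemma \ref{numdet} matches the paper's use of it just before the proposition.
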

\begin{proof}
We observe that any Cartier divisor $D$ on $X'$ is $\pi-$linearly equivalent, over $\mathbb Q$, to an exceptional divisor via $$D=\pi^*(\pi_*D)+(D-\pi^*\pi_*D).$$ The pullback by $\pi$ is well defined since the Weil divisor $\pi_*D$ is $\mathbb Q-$Cartier by assumption and $D-\pi^*\pi_*D$ is clearly exceptional.
\end{proof}

A subspace of ${\rm Exc}(\pi)$ we have seen to be relevant to the study of local volumes is formed by the divisors lying over $x$. We denote it by ${\rm Exc}_x(\pi)$. Studying the behavior of the local volume function on this space will prove important in connecting our work to the study of volumes for some $b-$divisors as developed in \cite{BdFF}. A particularly useful result, drawing on \cite[Lem.1-3-2]{KMM}, is \cite[Lem.4.5]{dFH}:

\begin{lem}\label{excvan} Let $\pi:X'\to X$ be a proper birational morphism from a non-singular variety $Y$ onto the normal variety $X$. Let $P$ and $N$ be effective divisors on $X'$ without common components and assume that $P$ is $\pi-$exceptional. Then $\pi_*\mathcal O_{X'}(P-N)=\pi_*\mathcal O_{X'}(-N).$\end{lem}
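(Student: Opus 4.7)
The plan is to verify the non-trivial inclusion of sheaves locally over $X$. One direction, $\pi_*\mathcal O_{X'}(-N)\subseteq\pi_*\mathcal O_{X'}(P-N)$, is automatic from $P\geq 0$, so I focus on the reverse. Work over an affine open $U\subseteq X$ with preimage $V=\pi^{-1}(U)$; a section of $\pi_*\mathcal O_{X'}(P-N)$ over $U$ is a rational function $f\in K(X)$ satisfying ${\rm div}_{X'}(f)+P-N\geq 0$ on $V$, and I need to strengthen this to ${\rm div}_{X'}(f)-N\geq 0$ on $V$.

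The first step is to decompose
$${\rm div}_{X'}(f)-N=G^+-G^-$$
into positive and negative parts, so that $G^+$ and $G^-$ are effective divisors on $V$ with disjoint supports. Since $P$ and $N$ share no common components, comparing coefficients along any prime component of $G^-$ in the inequality ${\rm div}_{X'}(f)+P-N\geq 0$ forces $G^-\leq P$. In particular the support of $G^-$ lies in the support of $P$, so $G^-$ is $\pi$-exceptional.

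The second step is to push forward and invoke normality. Since $\pi$ is birational, cycle-theoretic pushforward satisfies $\pi_*{\rm div}_{X'}(f)={\rm div}_X(f)$, and effective $\pi$-exceptional divisors are annihilated by $\pi_*$, so
$${\rm div}_X(f)=\pi_*G^++\pi_*N\geq 0\quad\mbox{on }U.$$
As $X$ is normal, this forces $f\in\mathcal O_X(U)$; pulling back along $\pi$, $f$ is then regular on $V$, so ${\rm div}_{X'}(f)\geq 0$ there. Combined with the original inequality at components of $N$, where the $P$-contribution vanishes and hence ${\rm ord}_D(f)\geq{\rm ord}_D(N)$ directly, one concludes ${\rm div}_{X'}(f)\geq N$ on $V$, as required.

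The argument is essentially a coefficient-chasing exercise; the substantive ingredients are the vanishing of $\pi_*$ on effective $\pi$-exceptional divisors and the normality of $X$, which promotes regularity in codimension one to regularity on $U$. The hypothesis that $P$ and $N$ have no common components is crucial: it is precisely what forces the control $G^-\leq P$ in the first step, and without it the negative part of ${\rm div}_{X'}(f)-N$ need not be exceptional.
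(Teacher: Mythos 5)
Your argument is correct, and it is essentially the proof of the sources the paper is quoting: the paper does not prove this lemma itself but cites \cite[Lem.4.5]{dFH}, which rests on \cite[Lem.1-3-2]{KMM}, and the standard argument there is exactly yours — push the inequality ${\rm div}_{X'}(f)+P-N\geq 0$ forward, kill the $\pi$-exceptional part, use normality of $X$ to conclude that $f$ is regular on $U$ and hence on $V$, and then re-read the original inequality along the components of $N$. (The $G^+/G^-$ decomposition is a slight detour: since pushforward preserves effectivity one can directly write ${\rm div}_X(f)=\pi_*{\rm div}_{X'}(f)\geq \pi_*(N-P)=\pi_*N\geq 0$.) One inaccuracy in your closing commentary only: the bound $G^-\leq P$, and hence the exceptionality of $G^-$, follow from ${\rm div}_{X'}(f)+P-N\geq 0$ alone and do not use the hypothesis that $P$ and $N$ share no components; that hypothesis enters exactly once, in your final step, to guarantee ${\rm ord}_D(P)=0$ along each component $D$ of $N$ — which is where your proof in fact invokes it.
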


It is natural to ask what divisors in $N^1(X'/X)_{\mathbb R}$ have zero local volume over $x$. The answer to this question is well understood for volumes of Cartier divisors on projective varieties; we know that $\vol(D)>0$ is equivalent to $D$ being in the interior of the cone of pseudo-effective divisors (see \cite[Ch.2.2.C]{L}). In the local setting, we start by looking at the fiber over $x$.

\begin{prop}\label{evan} Let $\pi:X'\to X$ be a proper birational morphism of algebraic varieties with $X$ normal of dimension $n\geq 2$ and let $x\in X$. For $D\in {\rm Exc}_x(\pi)$, the vanishing $\vol_x(D)=0$ is equivalent to $D$ being effective.\end{prop}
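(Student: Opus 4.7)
The plan is to treat the two implications separately. Suppose first that $D$ is effective, i.e., supported as $D=\sum a_iE_i$ with all $a_i\geq 0$ on the prime components $E_i$ of the fiber over $x$. Then Lemma \ref{l2}(i) applied with the zero divisor as base gives $\vol_x(0)\geq \vol_x(D)$. Since $X$ is normal of dimension $\geq 2$, $\mathcal{O}_X$ is $S_2$, so $\hl(0)=\dim H^1_{\{x\}}(X,\mathcal{O}_X)=0$ and hence $\vol_x(0)=0$. Combined with the trivial $\vol_x(D)\geq 0$, this yields $\vol_x(D)=0$.

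Conversely, suppose $D\in{\rm Exc}_x(\pi)$ is not effective. By continuity and $n$-homogeneity of $\vol_x$ (Theorem \ref{t1}), after a small rational perturbation inside a neighborhood on which some negative coefficient stays bounded away from zero, followed by clearing denominators, it suffices to obtain a lower bound $\vol_x(D)>0$ in the case where $D$ is an \emph{integral} Cartier divisor with at least one strictly negative coefficient and where the bound depends on the magnitude of this coefficient. Using Corollary \ref{c1}, we may further replace $X'$ by a resolution and assume $X'$ smooth. Write then $D=P-N$ with $P,N$ effective Cartier divisors over $x$ with no common components and $N\neq 0$.

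The central computation is to apply Lemma \ref{excvan} (to $mP$, which is effective and $\pi$-exceptional, and $mN$), giving
$$\pi_*\mathcal{O}_{X'}(mD)=\pi_*\mathcal{O}_{X'}(mP-mN)=\pi_*\mathcal{O}_{X'}(-mN).$$
The right-hand side is an $\mathfrak{m}_x$-primary coherent ideal sheaf of $\mathcal{O}_X$ (it is contained in $\mathcal{O}_X$, agrees with $\mathcal{O}_X$ away from $x$, and does not contain $1$ at $x$ because $N\neq 0$). Using again $H^1_{\{x\}}(\mathcal{O}_X)=0$, the long exact sequence associated to
$$0\to\pi_*\mathcal{O}_{X'}(-mN)\to\mathcal{O}_X\to\mathcal{O}_X/\pi_*\mathcal{O}_{X'}(-mN)\to 0$$
identifies
$$\hl(mD)=\dim_{\mathbb{C}}\bigl(\mathcal{O}_{X,x}/\pi_*\mathcal{O}_{X'}(-mN)_x\bigr).$$

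It remains to show this length grows at least like $m^n$. Fix an irreducible component $E$ of $N$ with multiplicity $n_E\geq 1$. Since $mN\geq mn_EE$, the inclusion
$$\pi_*\mathcal{O}_{X'}(-mN)_x\subseteq\{f\in\mathcal{O}_{X,x}\colon v_E(f)\geq mn_E\}$$
holds. By Izumi's theorem (\cite{Iz},\cite{R}), applicable since $\mathcal{O}_{X,x}$ is analytically irreducible (normal over $\mathbb{C}$), there is a constant $C>0$ with $v_E(f)\leq C\cdot{\rm ord}_{\mathfrak{m}_x}(f)$ for all $f\in\mathcal{O}_{X,x}$. Consequently the right-hand ideal is contained in $\mathfrak{m}_x^{\lceil mn_E/C\rceil}$, and Hilbert--Samuel gives
$$\hl(mD)\geq\dim\mathcal{O}_{X,x}/\mathfrak{m}_x^{\lceil mn_E/C\rceil}\sim\frac{e(\mathfrak{m}_x)}{n!}\cdot\Bigl(\frac{mn_E}{C}\Bigr)^n,$$
so $\vol_x(D)\geq e(\mathfrak{m}_x)(n_E/C)^n>0$. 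The main technical input is Izumi's theorem, already used in the proof of Lemma \ref{linbd2}; the principal subtlety is the reduction from real to integral divisors, which must be carried out with sufficient uniformity that continuity lets the strict positivity survive in the limit.
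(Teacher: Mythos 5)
Your proof is correct and follows essentially the same route as the paper's: the effective direction reduces to $H^1_{\{x\}}(\mathcal O_X)=0$ (normality and $\dim X\geq 2$), and the non-effective direction uses Lemma \ref{excvan} to reduce to $D=-N$ with $N$ effective and nonzero, then Izumi's theorem to trap $\pi_*\mathcal O_{X'}(-mN)$ inside $\mathfrak m_x^{\lceil m/r\rceil}$ and conclude by Hilbert--Samuel. Your explicit handling of the passage from real to integral classes (uniformity of the lower bound in the magnitude of the negative coefficient) is a detail the paper leaves implicit, but the argument is the same.
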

\begin{proof}We can assume that $X$ is projective, that $\pi$ is a log-resolution and that $D$ is an integral divisor. If $D$ is effective, then $\pi_*\mathcal O_{X'}(mD)=\mathcal O_X$ for all $m\geq 0$ and so $\vol_x(D)=0$. Using Lemma \ref{excvan}, to complete the proof, it is enough to show that if $-D$ is effective, then $\vol_x(D)>0$. 
\par Let $\mathfrak m$ denote the maximal ideal sheaf on $X$ corresponding to $x$ and let $e(\mathcal I)$ denote the Hilbert-Samuel multiplicity at $x$ of an $\mathfrak m-$primary ideal sheaf $\mathcal I$. The idea is to show that there exists $r>0$ such that for all $m\geq 1$ we have an inclusion $$\pi_*\mathcal O_{X'}(mD)\subseteq\mathfrak m^{[m/r]},$$ because then $e(\pi_*\mathcal O_{X'}(mD))\geq e(m^{[m/r]})$, leading to $\vol_x(D)\geq e(\mathfrak m)/r^n>0$. This is a consequence of a result of Izumi (see \cite[Cor.3.5]{Iz}, or the presentation of Rees in \cite{R}).
\end{proof}

For abritrary Cartier divisors on $X'$ we can also give a precise answer, but one that does not provide satisfying geometric intuition.
\begin{prop}\label{ovan} Let $\pi:X'\to X$ be a proper birational morphism of algebraic varieties with $X$ normal. Assume $\dim X\geq 2$ and let $x$ be a point on $X$. If $D$ is a Cartier divisor on $X'$, then $\vol_x(D)=0$ if, and only if,  $\hl(m\widetilde D)=0$ for all $m\geq0$, where $\widetilde D$ is the pullback of $D$ to the normalization of $X'$.\end{prop}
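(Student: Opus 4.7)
\medskip
\noindent\textbf{Proof plan.} \emph{Reductions.} Since Corollary \ref{c1} gives $\vol_x(D)=\vol_x(\widetilde D)$ and the right-hand side of the biconditional is already phrased on the normalization, I may assume $X'$ is normal. For a resolution of singularities $f\colon Y\to X'$ one has $f_*\mathcal O_Y=\mathcal O_{X'}$, so $\pi_*\mathcal O_{X'}(mD)=(\pi\circ f)_*\mathcal O_Y(mf^*D)$ and therefore the whole sequence $\hl(mD)$ is preserved on passing to the resolution. Combined with Corollary \ref{c1}, I may further assume $X'$ is non-singular. The implication $\big(\hl(mD)=0\ \forall m\big)\Rightarrow \vol_x(D)=0$ is immediate from the definition, so only the converse requires work.

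\medskip
\noindent\emph{Main implication, via contrapositive.} Suppose $\hl(m_0D)>0$ for some $m_0\geq 1$. By Proposition \ref{homogeneity}, $\vol_x(D)$ and $\vol_x(m_0D)$ vanish together, so I replace $D$ by $m_0D$ and assume $\hl(D)>0$. Fix an affine neighborhood $U$ of $x$. Remark \ref{r1}(ii) produces a rational section $s$ of $\mathcal O_{X'}(D)$ lying in $H^0(\pi^{-1}U,\mathcal O(D+kE))$ for some $k\geq 0$ but not in $H^0(\pi^{-1}U,\mathcal O(D))$. Setting $F:=\operatorname{div}(s)+D$, the divisor $F$ is Cartier on $\pi^{-1}U$, linearly equivalent to $D|_{\pi^{-1}U}$, effective on $\pi^{-1}U\setminus E$, and strictly negative along some component of $E$. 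Decompose $F=P-N$ into its positive and negative parts (no shared components); then $N\neq 0$ and $\operatorname{supp}(N)\subseteq\operatorname{supp}(E)$. Split $P=P_1+P_2$, where $P_1$ is the sum of components of $P$ supported on $E$ and $P_2$ collects the remaining components, so that $F=(P_1-N)+P_2$ with $P_1-N\in{\rm Exc}_x(\pi)$ \emph{not} effective (because $N\neq 0$ and $P_1,N$ share no components), while $P_2$ is effective and has no components over $x$.

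\medskip
\noindent\emph{Conclusion.} Because $\hl$ depends only on $\pi_*\mathcal O(mD)|_U$ by Remark \ref{r1} and excision, the local linear equivalences $mD\sim mF$ on $\pi^{-1}U$ give $\vol_x(D)=\vol_x(F)$. Lemma \ref{l3}(i), applicable as $X'$ is non-singular and $P_2$ is effective without components over $x$, yields $\vol_x(F)\geq\vol_x(P_1-N)$. Finally, Proposition \ref{evan} applied to the non-effective exceptional divisor $P_1-N$ gives $\vol_x(P_1-N)>0$. Chaining these, $\vol_x(D)>0$, contradicting the assumption that $\vol_x(D)=0$ and completing the proof.

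\medskip
\noindent\emph{Main obstacle.} The crux is the passage from the purely cohomological witness $\hl(D)>0$ to a \emph{geometric} witness: a non-effective exceptional divisor that appears, $\pi$-locally, as a summand of $D$. The section $s$ provides exactly such a decomposition, and the point is that the negative part $N$ is forced to be a nonzero effective divisor over $x$ precisely because $s$ does not extend across $E$. Once this decomposition is in hand, the remaining work is routine: Lemma \ref{l3}(i) absorbs the non-exceptional summand $P_2$, and Proposition \ref{evan} (which already encodes Izumi's theorem) handles the exceptional summand $P_1-N$.
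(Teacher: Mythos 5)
Your proposal is correct and follows essentially the same route as the paper: reduce to $X'$ non-singular via Corollary \ref{c1}, use homogeneity to assume $\hl(D)\neq 0$, extract from a non-extending section a local linear equivalence $D\sim F$ whose negative part is a nonzero divisor over $x$, and then combine Lemma \ref{l3}(i) with Proposition \ref{evan}. The only difference is that you spell out explicitly (via Remark \ref{r1}(ii)) how the decomposition into an effective part without components over $x$ plus a non-effective exceptional part arises, a step the paper leaves implicit.
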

\begin{proof}Since $\vol_x(D)=\vol_x(\widetilde D)$ and $\hl(m\widetilde D)$ is invariant under pullbacks from the normalization of $X'$ to another birational model of $X$, we can assume that $X'$ is non-singular and $\widetilde D=D$. One implication is clear. Since $\vol_x$ is $n-$homogeneous, we can assume without loss of generality that $\hl(D)\neq 0$. This means that $D$ is linearly equivalent to a divisor $F+G$ with $F$ effective (at least in a neighborhood of $E$) without components over $x$ and with $G$ a non-effective divisor lying over $x$. By Lemmas \ref{invpull}, \ref{l3} and by Proposition \ref{evan}, we then have $\vol_x(D)=\vol_x(F+G)\geq\vol_x(G)>0$.\end{proof}

\begin{rem}It is a consequence of Lemmas \ref{invpull}, \ref{l3} and \ref{excvan} that if $D$ is an exceptional divisor (not necessarily effective) without components lying over $x$ on the non-singular $X'$, then $\vol_x(D)=0$.\end{rem}

The conclusion of Proposition \ref{ovan} is not sufficient in understanding the vanishing of the local volume function on $N^1(X'/X)_{\mathbb R}$. We can prove the following partial result:
\begin{prop} Let $\pi:X'\to X$ be a proper birational map onto a normal algebraic variety $X$ with $x$ a distinguished point. Let $C_x$ denote the open cone in ${\rm Exc}_x(\pi)$ spanned by effective classes whose support is the entire divisorial component of the set theoretic fiber $\pi^{-1}\{x\}$. Then there exists an open cone $\mathcal C$ in $N^1(X'/X)_{\mathbb R}$ such that $\mathcal C\cap{\rm Exc}_x(\pi)=C_x$ and $\vol_x(D)=0$ for any $D\in\mathcal C$.\end{prop}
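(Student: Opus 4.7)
My plan is to construct $\mathcal C$ as a Minkowski sum and verify the three required properties. I would first reduce to $X$ affine (Remark \ref{nonqproj}) and $X'$ smooth (Corollary \ref{c1}), let $E_1,\ldots,E_k$ be the irreducible components of the reduced divisorial fiber $E$ over $x$, and introduce the subspace $\mathcal K \subseteq N^1(X'/X)_{\mathbb R}$ spanned by classes of $\mathbb R$-Cartier divisors on $X'$ whose support is disjoint from $\pi^{-1}(x)$. I then set $\mathcal C = C_x + \mathcal K$.

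For the vanishing of $\vol_x$ on $\mathcal C$, I would argue by locality: given $\alpha = [\sum a_i E_i + H] \in \mathcal C$ with $a_i > 0$ and $H$ disjoint from $\pi^{-1}(x)$, pick an affine neighborhood $U$ of $x$ avoiding $\pi({\rm Supp}(H))$, so that $\mathcal O_{X'}(m\alpha)|_{\pi^{-1}(U)} = \mathcal O_{X'}(m\sum a_i E_i)|_{\pi^{-1}(U)}$. Lemma \ref{excvan} with $N = 0$ then gives $\pi_*\mathcal O_{X'}(m\sum a_i E_i) = \mathcal O_X$ near $x$, while $H^1_{\{x\}}(U,\mathcal O_U) = 0$ follows from normality and dimension $\geq 2$ via the restriction sequence together with $H^0(U) = H^0(U \setminus \{x\})$. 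Hence $\hl(m\alpha) = 0$ for all $m$, so $\vol_x(\alpha) = 0$.

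For the intersection condition $\mathcal C \cap {\rm Exc}_x(\pi) = C_x$, I would show $\mathcal K \cap {\rm Exc}_x(\pi) = 0$. Given $\kappa$ in this intersection with representations $\kappa = [H] = [\sum e_i E_i]$ and $H$ disjoint from $\pi^{-1}(x)$, the equivalence $H \equiv_\pi \sum e_i E_i$, tested against curves $C$ in fibers of $\pi$, separates the two sides: for $C \subseteq \pi^{-1}(x)$ disjointness of $H$ forces $\sum e_i(E_i \cdot C) = 0$, while for $C$ in a fiber over $y \ne x$ disjointness of the $E_i$'s from $\pi^{-1}(y)$ forces $H \cdot C = 0$. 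Thus both $H$ and $\sum e_i E_i$ are individually $\pi$-numerically trivial, and Lemma \ref{numdet} then yields $\sum e_i E_i = 0$, so $\kappa = 0$ and $\alpha \in C_x$.

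The hard part will be openness of $\mathcal C$ in $N^1(X'/X)_{\mathbb R}$, which is equivalent to the splitting ${\rm Exc}_x(\pi) \oplus \mathcal K = N^1(X'/X)_{\mathbb R}$. I would decompose an arbitrary Cartier divisor $D$ on $X'$ as $D = D_E + \sum c_j N_j$ with $N_j$ prime divisors not in $E$: those with $N_j \cap \pi^{-1}(x) = \emptyset$ give classes in $\mathcal K$, but the components meeting $\pi^{-1}(x)$ require care. Assuming $X$ is $\mathbb Q$-factorial, one can form the $\pi$-exceptional divisor $\pi^*\pi_*N_j - N_j$ and split its support into components over $x$ (in ${\rm Exc}_x(\pi)$) versus components over other subvarieties (disjoint from $\pi^{-1}(x)$, hence in $\mathcal K$). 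For non-$\mathbb Q$-factorial normal $X$, this needs either a small modification or an approximation argument, and I expect this step to be the main technical subtlety.
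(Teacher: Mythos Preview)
Your construction $\mathcal C = C_x + \mathcal K$ has a genuine gap: the subspace $\mathcal K$ is in general far too small to make $\mathcal C$ open, and this failure occurs already for smooth $X$, not just in the non-$\mathbb Q$-factorial case you flagged. The obstruction is intersection-theoretic: any Cartier divisor $D$ with support disjoint from $\pi^{-1}(x)$ satisfies $D\cdot\gamma=0$ for every curve $\gamma\subset\pi^{-1}(x)$, so $\mathcal K$ is contained in the annihilator of the span of such curves in $N_1(X'/X)$. When those curves span all of $N_1(X'/X)$, you get $\mathcal K=0$.

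Here is a concrete counterexample. Take $X=\mathbb A^3$ with $x$ the origin, blow up $x$ to get exceptional divisor $E_1\cong\mathbb P^2$, then blow up the strict transform of a line through $x$ to get $E_2$. Now $N^1(X'/X)_{\mathbb R}=\mathbb R[E_1]\oplus\mathbb R[E_2]$ and ${\rm Exc}_x(\pi)=\mathbb R[E_1]$. The curves in $\pi^{-1}(x)=E_1^{X'}\cong{\rm Bl}_p\mathbb P^2$ pair nondegenerately with $[E_1],[E_2]$, so $\mathcal K=0$ and your $\mathcal C=C_x$ is a ray in a plane --- not open. Yet the proposition does hold here: the half-plane $\{a[E_1]+b[E_2]:a>0\}$ is a valid choice of $\mathcal C$, as one checks directly from Lemmas \ref{l2} and \ref{l3} and the remark following Proposition \ref{ovan}.

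The paper's proof avoids any linear decomposition of $N^1(X'/X)_{\mathbb R}$. Instead it fixes $E\in C_x$ and proves the analytic statement that $\vol_x(D+tE)=0$ for every $D$ once $t\gg0$: monotonicity reduces to $D$ effective without components over $x$, and then the Siu-type estimate of Lemma \ref{l3}(iii) bounds $\vol_x(D+t\Delta_1)$ by a volume on the projective scheme $\Delta_1$ that vanishes for large $t$ because $(-\Delta_1-\Delta_2)|_{\Delta_1}$ is ample. The uniform continuity estimate from Proposition \ref{cont} then upgrades this pointwise vanishing along rays to vanishing on an open cone. The moral is that the cone $\mathcal C$ is produced by inequalities coming from the variational estimates for $\vol_x$, not by an a priori splitting of the N\'eron--Severi space.
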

\begin{proof}We can assume that $X'$ is non-singular. Fix $E\in C_x$. We first show that for any Cartier divisor $D$ on $X'$ it holds that $\vol_x(D+tE)=0$ for $t\gg0$. By the monotonicity properties in Lemmas \ref{l2} and \ref{l3}, we can further assume $D$ is effective without components over $x$. With the notation in Lemma \ref{l3} and by the approximation result there, $$\vol_x(D+t\Delta_1)=\vol_x(D+t\Delta_1)-\vol_x(t\Delta_1)\leq\vol((t(\Delta_1+\Delta_2)+N)|_{\Delta_1})=0$$ for $t\gg0$ since $(-\Delta_1-\Delta_2)|_{\Delta_1}$ is ample and $\Delta_2|_{\Delta_1}$ is effective. There exists positive $r$ such that $rE>\Delta_1$. Then $\vol_x(D+trE)\leq\vol_x(D+t\Delta_1)$ by Lemma \ref{l2} and we conclude that $\vol_x(D+tE)=0$ for $t\gg0$.
\par Working as in the proof of Proposition \ref{cont}, the result follows. 
\end{proof}

\par We have seen in Theorem \ref{t1} that when $\pi:X'\to X$ is a projective birational morphism onto a normal quasi-projective variety $X$ of dimension $n$ at least two and when $x$ is a point on $X$, we have that $\vol_x$ is continuous and $n-$homogeneous function on $N^1(X'/X)_{\mathbb R}$. These properties are shared by volumes of Cartier divisors on projective varieties (see \cite[Ch.2.2.C]{L} or \cite{LM}). In the projective setting, it is known that the volume function is log-concave on the big cone (\cite[Cor.4.12]{LM}), meaning that $$\vol(\xi+\xi')^{1/n}\geq\vol(\xi)^{1/n}+\vol(\xi')^{1/n}$$ for any classes $\xi$ and $\xi'$ with non-zero volume. In our local setting, it is easy to construct examples of divisors $E-E'$ lying over $x$ such that $\vol_x(E-E')$ and $\vol_x(E'-E)$ are both non-zero and so we cannot expect log-concavity. Generalizing \cite[Rem.4.17]{BdFF} and \cite[Thm.4.15]{BdFF}, results developed in the setting of isolated singularities, we show that $\vol_x$ is log-convex when we restrict to divisors lying over $x$.

\begin{prop}\label{logconvex} Let $\pi:X'\to X$ be a projective birational morphism onto the normal quasi-projective variety $X$ of dimension $n\geq 2$ and let $x\in X$. Then $\vol_x:{\rm Exc}_x(\pi)\to\mathbb R_{\geq0}$ is log-convex.\end{prop}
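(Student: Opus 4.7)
The plan is to reduce log-convexity on ${\rm Exc}_x(\pi)$---interpreted as subadditivity of $\vol_x^{1/n}$, i.e.\ the inequality
$$\vol_x(D_1+D_2)^{1/n}\leq\vol_x(D_1)^{1/n}+\vol_x(D_2)^{1/n}$$
for $D_1,D_2\in{\rm Exc}_x(\pi)$---to the classical Teissier--Rees--Sharp Minkowski inequality for Hilbert--Samuel multiplicities of $\mathfrak m_x$-primary ideals, via our Fujita approximation result (Theorem~\ref{Fujita}). This is the local-volumes analogue of the strategy used in \cite[Thm.4.15]{BdFF} for $b$-divisors.

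By Theorem~\ref{t1} and $n$-homogeneity, I reduce to integral Cartier divisors $D_1,D_2\in{\rm Exc}_x(\pi)$; by passing to a log-resolution I may further assume $X'$ is smooth and $X$ affine. If any of $\vol_x(D_1),\vol_x(D_2),\vol_x(D_1+D_2)$ vanishes, Proposition~\ref{evan} forces the corresponding divisor to be effective, and the inequality follows either from Lemma~\ref{l2}(i) (monotonicity in the effective directions lying over $x$) or trivially. So I assume all three volumes are positive. Writing $D_i=P_i-N_i$ and $D_1+D_2=P-N$ with $P_\bullet,N_\bullet$ effective exceptional divisors over $x$ sharing no components, Proposition~\ref{evan} again guarantees that $N_1,N_2,N$ are all non-zero.

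Lemma~\ref{excvan} now identifies $\mathfrak a_p^{(i)}:=\pi_*\mathcal O_{X'}(pD_i)=\pi_*\mathcal O_{X'}(-pN_i)$ and $\mathfrak a_p^{(1+2)}:=\pi_*\mathcal O_{X'}(p(D_1+D_2))=\pi_*\mathcal O_{X'}(-pN)$ as genuine ideal sheaves in $\mathcal O_X$, and the Izumi-type argument from the proof of Proposition~\ref{evan} makes all three $\mathfrak m_x$-primary for $p\gg 0$. The key algebraic input is the local product inclusion
$$\mathfrak a_p^{(1)}\cdot\mathfrak a_p^{(2)}\subseteq\mathfrak a_p^{(1+2)},$$
which holds because any local sections $f_1,f_2$ with ${\rm div}(f_i)+pD_i\geq 0$ on $X'$ automatically satisfy ${\rm div}(f_1f_2)+p(D_1+D_2)\geq 0$. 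Since Hilbert--Samuel multiplicity $e$ reverses inclusions of $\mathfrak m_x$-primary ideals, and by Teissier's Minkowski inequality,
$$e\bigl(\mathfrak a_p^{(1+2)}\bigr)^{1/n}\leq e\bigl(\mathfrak a_p^{(1)}\cdot\mathfrak a_p^{(2)}\bigr)^{1/n}\leq e\bigl(\mathfrak a_p^{(1)}\bigr)^{1/n}+e\bigl(\mathfrak a_p^{(2)}\bigr)^{1/n}.$$

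Each $D\in\{D_1,D_2,D_1+D_2\}$ lies entirely over $x$, so Theorem~\ref{Fujita} applies to it (with the ``$D_2$'' in that statement taken to be zero), giving $\vol_x(D)=\lim_p\mult(\pi_*\mathcal O_{X'}(pD))/p^n$; since $\mult$ agrees with $e$ on $\mathfrak m_x$-primary ideals, this is $\lim_p e(\mathfrak a_p)/p^n$. Dividing the displayed inequality by $p$ and letting $p\to\infty$ produces the desired subadditivity. The main delicate point is the verification that the pushforward ideals are genuinely $\mathfrak m_x$-primary for $p\gg 0$ (so that Teissier applies); this is handled by the Izumi argument of Proposition~\ref{evan}, and the rest is a routine passage to the limit.
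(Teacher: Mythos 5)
Your proposal is correct and follows essentially the same route as the paper: reduce by homogeneity and continuity to integral divisors lying over $x$, apply Teissier's Minkowski inequality to the $\mathfrak m$-primary pushforward ideals $\pi_*\mathcal O_{X'}(pD_i)$ via the product inclusion, and pass to the limit using the Fujita approximation of \cite[Thm.3.8]{LM}. The extra care you take with the vanishing cases and with Lemma~\ref{excvan} to realize the pushforwards as genuine ideal sheaves is harmless bookkeeping (in particular, $\mathfrak m$-primariness is automatic from the cosupport being $\{x\}$ and does not need the Izumi argument).
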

\begin{proof} The idea is that by the Fujita approximation result in \cite[Thm.3.8]{LM}, when $D$ lies over $x$, we can understand $\vol_x(D)$ as an asymptotic Hilbert-Samuel multiplicity. Then we apply Teissier's inequality (\cite[Ex.1.6.9]{L}). Let $\mathfrak m$ denote the maximal ideal corresponding to $x\in X$ and for an $\mathfrak m-$primary ideal sheaf $\mathcal I$ on $X$, denote by $e(\mathcal I)$ its Hilbert-Samuel multiplicity.
\par By the continuity and homogeneity of $\vol_x$, we can reduce to working with integral Cartier divisors lying over $x$.  Let $D$ and $D'$ be two such and construct the graded families of $\mathfrak m-$primary ideals $\mathfrak a_m=\pi_*\mathcal O_{X'}(mD)$ and $\mathfrak a'_m=\pi_*\mathcal O_{X'}(mD')$.  By \cite[Thm.3.8]{LM}, $$\vol_x(D)=\lim_{m\to\infty}\frac{e(\mathfrak a_m)}{m^n}$$ and a similar equality holds for $\vol_x(D')$. Denoting $\mathfrak b_m=\pi_*\mathcal O_{X'}(m(D+D'))$, one has $$\mathfrak a_m\cdot\mathfrak a'_m\subseteq\mathfrak b_m,$$ therefore $e(\mathfrak b_m)\leq e(\mathfrak a_m\cdot\mathfrak a'_m)$. Teissier's inequality in \cite[Ex.1.6.9]{L} then implies $$e(\mathfrak b_m)^{1/n}\leq e(\mathfrak a_m\cdot\mathfrak a'_m)^{1/n}\leq e(\mathfrak a_m)^{1/n}+e(\mathfrak a'_m)^{1/n}$$ and the conclusion follows again by \cite[Thm.3.8]{LM}.\end{proof}

\begin{rem} Note that we did not restrict ourselves to working with classes having positive volume as was necessary in the projective setting.\end{rem}

When $\pi$ is an isomorphism outside $x$ and $X$ is $\mathbb Q-$factorial, Propositions \ref{logconvex} and \ref{Qfact} show that $\vol_x$ is log-convex on $N^1(X'/X)_{\mathbb R}$. We construct a toric example showing that this does not hold for general $\pi$.

\begin{exap}\label{tnc}Let $\sigma\subset\mathbb R^3$ be the cone spanned by the vectors $(0,1,0)$, $(0,0,1)$ and $(1,0,-2)$. Let $\Sigma$ be a refinement obtained by adding the rays spanned by $(1,1,1)$ and $(1,0,0)$ such that $X(\Sigma)$ is $\mathbb Q-$factorial. These determine a proper birational toric morphism $\pi:X(\Sigma)\to X(\sigma)$ that is not an isomorphism outside $x_{\sigma}$. Let $x=x_{\sigma}$ be the torus fixed point of $X(\sigma)$. On $X(\Sigma)$, let $D$ and $E$ be the torus invariant divisors associated to the rays $(1,0,-2)$ and $(1,1,1)$ respectively. We show that 
$$\vol_x(2D-\frac12 E)^{1/3}+\vol_x(2D-\frac 32 E)^{1/3}<\vol_x(4D-2E)^{1/3}=2\cdot\vol_x(2D-E)^{1/3}.$$
The idea is to study the function $\vol_x(2D-tE)$. By Example \ref{toric}, the volume $\vol_x(2D-tE)$ is computed as the normalized volume of the body $$B(t)=\{(x,y,z)\in\mathbb R^3\ :\  x\geq 0,\ y\geq0,\ z\geq 0,\ x-2z\geq-2,\ x+y+z\leq t\}.$$  
Let $S(t)$ be the simplex generated by $(0,0,0)$, $(t,0,0)$, $(0,t,0)$ and $(0,0,t)$. We have $B(t)=S(t)$ for $0\leq t\leq 1$ and $B(t)\subsetneq S(t)$ for $t>1$. Figure \ref{fig:toric} shows the polyhedron $B(3/2)$ corresponding to $2D-\frac32E$. The desired inequality follows easily from the linearity of $\vol(S(t))^{1/3}$.
\begin{figure}[htb]
\begin{center}
\leavevmode
\includegraphics[width=0.4\textwidth]{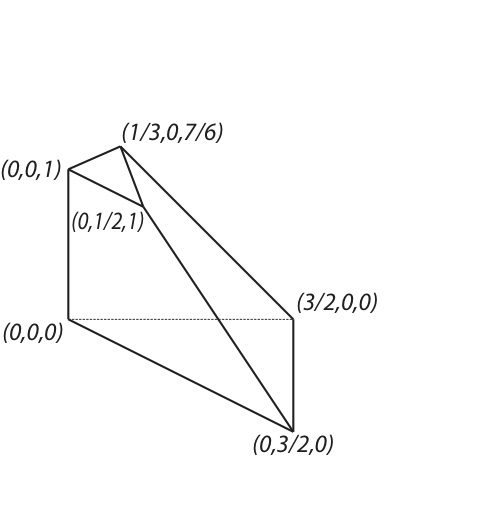}
\end{center}
\caption{$B(3/2)$}
\label{fig:toric}
\end{figure}

\end{exap}

\section{Plurigenera and volumes for normal isolated singularities}
In this section we introduce a notion of volume for normal isolated singularities of dimension at least two. This volume, that we will denote $\vol(X,x)$, is obtained in the first subsection as an asymptotic invariant associated to the growth rate of the plurigenera in the sense of Morales or Watanabe. We generalize to higher dimension several results of Wahl (\cite{JW1}) who introduced this volume on surfaces  and translate to our setting several results of Ishii (\cite{I2}). The second subsection studies the Kn\"oller plurigenera and the associated volume $\vol_{\gamma}(X,x)$ that, using results or Ishii (\cite{I2}) and of de Fernex and Hacon (\cite{dFH}), relates to the study of canonical singularities. We end with a series of examples in the third subsection. The results of this section are the motivation for our work and the foundation has been laid by the papers of Ishii (\cite{I2}) and Wahl (\cite{JW1}).  

\subsection{The Morales and the Watanabe plurigenera and $\vol(X,x)$}

The \textit{geometric genus} of a normal complex quasi-projective isolated singularity $(X,x)$ of dimension $n$ at least $2$, is defined as $$p_g(X,x)=_{\rm def}\dim_{\mathbb C}(R^{n-1}\pi_*\mathcal O_{\widetilde X})_x,$$ for $\pi:\widetilde X\to X$ an arbitrary resolution of singularities. 
Work of Yau in \cite{Y} shows that this invariant of the singularity can be computed analytically on $X$ as 
$$p_g(X,x)=\dim\frac{H^0(U\setminus\{x\},\mathcal O^{an}_X(K_X))}{L^2(U\setminus\{x\})},$$ where $U$ is a sufficiently small Stein neighborhood of $x$ in $X$ and $L^2(U\setminus\{x\})$ is the set of all square integrable forms on $U\setminus\{x\}$. Motivated by this alternate description, in \cite{W}, the \textit{plurigenera} of $(X,x)$ were introduced as
$$\delta_m(X,x)=_{\rm def}\dim\frac{H^0(U\setminus\{x\},\mathcal O^{an}_X(mK_X))}{L^{2/m}(U\setminus\{x\})},$$ with 
$L^{2/m}(U\setminus\{x\})$ now denoting the set of homomorphic $m-$forms $\omega$ on the sufficiently small $U\setminus\{x\}$ that satisfy $\int_{U\setminus\{x\}}(\omega\wedge\bar\omega)^{1/m}<\infty.$ The proofs of \cite[Thm.2.1]{S}, \cite[Thm.1.1]{S} and remarks in \cite{I2} provide an algebro-geometric approach to plurigenera at the expense of working again on resolutions. Let $\pi:\widetilde X\to X$ be a log-resolution of $(X,x)$ with $E$ the reduced fiber over $x$, let $U$ be an arbitrary affine neighborhood of $x$ and let $\widetilde U$ be the pre-image of $U$ in $\widetilde X$ via $\pi$. Then working in the algebraic category, $$\delta_m(X,x)=\dim\frac{H^0(\widetilde U\setminus E,\mathcal O_{\widetilde X}(mK_{\widetilde X}))}{H^0(\widetilde U,\mathcal O_{\widetilde X}(mK_{\widetilde X}+(m-1)E))}=\dim\frac{\mathcal O_X(mK_X)}{\pi_*\mathcal O_{\widetilde X}(mK_{\widetilde X}+(m-1)E)},$$ with the last equality holding, for choices of Weil canonical classes on $X$ and $\widetilde X$ such that $\pi_*K_{\widetilde X}=K_{\widetilde X}$ as Weil divisors, since $U$ is affine.

\begin{defn} Generalizing work in \cite{JW1} for the case of surfaces, the volume of the normal isolated singularity $(X,x)$ of dimension $n$ is defined as $$\vol(X,x)=_{\rm def}\limsup_{m\to\infty}\frac{\delta_m(X,x)}{m^n/n!}.$$\end{defn}
\noindent We would like to understand this volume as a local volume of some Cartier divisor on a log-resolution of $(X,x)$. For this, it turns out that a more convenient plurigenus is the one introduced by Morales in \cite{M}:
$$\lambda_m(X,x)=_{\rm def}\dim\frac{H^0(\widetilde U\setminus E,\mathcal O_{\widetilde X}(mK_{\widetilde X}))}{H^0(\widetilde U,\mathcal O_{\widetilde X}(m(K_{\widetilde X}+E)))},$$ for $\pi:\widetilde X\to X$ a log-resolution with $E$ the reduced fiber over $x$ and $\widetilde U$ the inverse image in $\widetilde X$ via $\pi$ of an affine neighborhood of $x$. By Remark \ref{r1}, $$\lambda_m(X,x)=\hl(m(K_{\widetilde X}+E)).$$ By \cite[Thm.5.2]{I2}, $$\vol(X,x)=\limsup_{m\to\infty}\frac{\lambda_m(X,x)}{m^n/n!}$$ and we can conclude that $$\vol(X,x)=\vol_x(K_{\widetilde X}+E),$$ independently of the chosen log-resolution.

\begin{rem} The classical literature usually requires that we work with good resolutions, i.e. that $\pi:\widetilde X\to X$ is a log-resolution that is an isomorphism outside $x$. To prove that the plurigenera are independent of the log-resolution, one applies the logarithmic ramification formula in \cite[Thm.11.5]{I1}, using that any two log-resolutions can be dominated by a third and that $X\setminus\{x\}$ is non-singular.\end{rem}

\begin{rem} If follows from Corollary \ref{truelim} that the $\limsup$ in the definition of $\vol(X,x)$ is an actual limit.\end{rem}

Generalizing a result for the volume of surface singularities (see \cite[Thm.2.8]{JW1}), we show that volumes of normal isolated singularities satisfy the following monotonicity property:

\begin{thm}\label{t2} Let $f:(X,x)\to (Y,y)$ be a finite morphism of normal isolated singularities i.e. $f$ is finite and set theoretically $f^{-1}\{y\}=\{x\}$. Then
$$\vol(X,x)\geq(\deg f)\cdot\vol(Y,y).$$
If $f$ is unramified away from $x$, then the previous inequality is an equality.
\end{thm}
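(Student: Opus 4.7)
My plan is to combine Proposition~\ref{pfp} with the logarithmic ramification formula and the monotonicity from Lemma~\ref{l3}(i).

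First, I would choose a good log-resolution $\pi_Y\colon(\widetilde Y,E_Y)\to(Y,y)$ and construct a lift of $f$: let $X_1$ be the normalization of the fiber product $X\times_Y\widetilde Y$, so that the two projections give $\nu\colon X_1\to X$ (proper birational, an isomorphism outside $x$ since $\pi_Y$ is one outside $y$) and $h\colon X_1\to\widetilde Y$ (finite of degree $\deg f$), with $f\circ\nu=\pi_Y\circ h$. In characteristic zero the cover $h$ is tame along $E_Y$, so Abhyankar's lemma applied locally along $E_Y$ ensures that $X_1$ is smooth and $E_{X_1}:=h^{-1}(E_Y)_{\mathrm{red}}$ is a simple normal crossings divisor. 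Hence $\nu\colon(X_1,E_{X_1})\to(X,x)$ is a good log-resolution on which I shall compute $\vol(X,x)$, avoiding the need to further resolve $X_1$.

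Next, Proposition~\ref{pfp} applied to the pair $(\nu,\pi_Y)$, the finite map $f$, its lift $h$, and the Cartier divisor $K_{\widetilde Y}+E_Y$, using the set-theoretic equality $f^{-1}(y)=\{x\}$, yields
\[(\deg f)\cdot\vol(Y,y)=(\deg f)\,\vol_y(K_{\widetilde Y}+E_Y)=\vol_x(h^*(K_{\widetilde Y}+E_Y)).\]
The logarithmic ramification formula for the finite tame morphism $h$ between smooth varieties with $h^{-1}(E_Y)_{\mathrm{red}}=E_{X_1}$ gives
\[K_{X_1}+E_{X_1}=h^*(K_{\widetilde Y}+E_Y)+R,\qquad R\geq 0.\]
The matching of log differentials along $E_{X_1}$, due to tameness in characteristic zero, implies that $R$ has no components contained in $E_{X_1}$: its components are precisely the codimension-one ramification loci of $h$ outside $E_{X_1}$, reflecting ramification of $f$ away from $x$. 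Since $\nu$ is an isomorphism outside $x$, any divisor on $X_1$ projecting to $x$ is contained in $E_{X_1}$, so $R$ has no components lying over $x$; moreover $R$ vanishes exactly when $f$ is unramified on $X\setminus\{x\}$.

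Lemma~\ref{l3}(i) applied to $mR$ (effective, without components over $x$) then gives, for every $m\geq 0$,
\[\hl(m(K_{X_1}+E_{X_1}))\geq\hl(m\,h^*(K_{\widetilde Y}+E_Y)),\]
and passing to asymptotic limits yields
\[\vol(X,x)=\vol_x(K_{X_1}+E_{X_1})\geq\vol_x(h^*(K_{\widetilde Y}+E_Y))=(\deg f)\vol(Y,y).\]
When $f$ is unramified on $X\setminus\{x\}$, the vanishing $R=0$ promotes this to equality. The main obstacle is ensuring that the normalization $X_1$ is smooth with SNC boundary $E_{X_1}$, which rests on tameness of the finite cover along an SNC divisor in characteristic zero; were one instead to work with a generically finite lift on an arbitrary log-resolution $\widetilde X$, one would face $\mu$-exceptional divisors contained in $E_{\widetilde X}$ on which the log-ramification divisor can acquire components lying over $x$, necessitating a more delicate combination of Lemmas~\ref{l2} and~\ref{l3} together with the birational invariance of $\vol_x$ from Corollary~\ref{c1}.
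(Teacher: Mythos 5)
Your overall strategy (lift $f$ over a log-resolution of $(Y,y)$, apply Proposition~\ref{pfp} to $K_{\widetilde Y}+E_Y$, and absorb the log-ramification divisor using the monotonicity of Lemma~\ref{l3}) is the same as the paper's, but the step you use to avoid further resolving is a genuine gap. Abhyankar's lemma does not give that the normalization $X_1$ of $X\times_Y\widetilde Y$ is smooth: for a tame cover branched along an SNC divisor it only says that, \'etale-locally, $X_1$ is a quotient of a Kummer cover, so $X_1$ acquires (abelian) quotient singularities in general. The normalization of $\{z^2=xy\}\to\mathbb A^2$, branched along the SNC divisor $\{xy=0\}$, is the $A_1$-singularity; exactly this phenomenon occurs over the strata where components of $E_Y$ meet, already for cyclic covers of surface singularities. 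In addition, the branch locus of $h\colon X_1\to\widetilde Y$ is not contained in $E_Y$: it also contains $\pi_Y^{-1}$ of the branch divisor of $f$, which need not be in SNC position with $E_Y$ unless $\pi_Y$ is chosen as a log-resolution of the pair $(Y,\mathrm{branch}(f))$ --- a choice you do not make. Since $X_1$ need not be smooth (nor even $\mathbb Q$-Gorenstein a priori), you cannot write $\vol(X,x)=\vol_x(K_{X_1}+E_{X_1})$ or invoke the log-ramification formula on $X_1$.

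Consequently you are forced into precisely the situation you dismiss in your last sentence, and that is what the paper does: it takes a log-resolution $u\colon(\widetilde X,E)\to(X,x)$ factoring through a log-resolution of $Z=X_1$, so that $\widetilde f\colon\widetilde X\to\widetilde Y$ is only generically finite, writes the reduced ramification locus as $E+S$ and the reduced branch locus as $F+R$ with $S$, $R$ having no components over $x$, $y$ respectively, and obtains $K_{\widetilde X}+E+S=\widetilde f^*(K_{\widetilde Y}+F+R)+T$ with $T$ effective and $\widetilde f$-exceptional. The extra work is then to decompose $\widetilde f^*R-S=P+Q$ with $P$ supported on $S$ (no components over $x$, handled by Lemma~\ref{l3}) and $Q$ effective and $u$-exceptional (handled, together with $T$, by the fact that pushing forward to $X$ kills effective exceptional divisors, as in Lemma~\ref{excvan}), before Proposition~\ref{pfp} can be applied. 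Your argument as written is incomplete without this step; the rest of your outline (the use of Proposition~\ref{pfp}, the vanishing of the log-ramification divisor along the reduced preimage of $E_Y$, and the equality case when $f$ is unramified off $x$) is correct in spirit and matches the paper.
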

\begin{proof} Let $\rho:(\widetilde Y,F)\to (Y,y)$ be a log resolution of $(Y,y)$. Let $Z$ be the normalization of $\widetilde Y$ in the fraction field of $X$ and let $u:(\widetilde X,E)\to (X,x)$ be a log-resolution factoring through a log-resolution of $Z$. We have a diagram:
$$\xymatrix{
\widetilde X\ar[dddr]_{\pi}\ar[drrr]^{\widetilde f}\ar[dr]^u\\
& Z\ar[dd]^{\tau}\ar[rr]^v & & \widetilde Y\ar[dd]^{\rho}\\
& & & \\
& X\ar[rr]^f & & Y}$$
We can assume that $\widetilde f$ has simple normal crossings for both the branching and ramification locus. We write the reduced branching locus as $F+R$, where $R$ has no components lying over $y$ and similarly write the reduced ramification locus as $E+S$ with $S$ having no components lying over $x$. 
\par A local study of forms with log poles at the generic points of each component of $E+S$ shows that $$K_{\widetilde X}+E+S=\widetilde f^*(K_{\widetilde Y}+F+R)+T,$$ where $T$ is an effective divisor that is exceptional for $\widetilde f$, hence also exceptional for $u$.
Note that $\widetilde f^*R-S$ is effective and write it as $P+Q$ with $P$ being supported on $S$ and with $Q$ being $u-$exceptional. Then
$$K_{\widetilde X}+E=\widetilde f^*(K_{\widetilde Y}+F)+P+(Q+T).$$
Since $P$ is supported on $S$, it has no components over $x$, so $$\vol(X,x)=\vol_x(K_{\widetilde X}+E)\geq\vol_x(\widetilde f^*(K_{\widetilde Y}+F)+(Q+T)),$$ by Lemma \ref{l3}. Since $Q+T$ is effective and $u-$exceptional and since $\vol_x$ is computed by pushing forward to $X$, $$\pi_*\mathcal O_{\widetilde X}(\widetilde f^*(K_{\widetilde Y}+F)+(Q+T))=\tau_*v^*\mathcal O_{\widetilde Y}(K_{\widetilde Y}+F),$$ and hence 
$$\vol_x(\widetilde f^*(K_{\widetilde Y}+F)+(Q+T))=\vol_x(v^*(K_{\widetilde Y}+F)).$$ By Proposition \ref{pfp}, $$\vol_x(v^*(K_{\widetilde Y}+F))=\deg(f)\cdot\vol_y(K_{\widetilde Y}+F)=\deg(f)\cdot\vol(Y,y).$$ 
\par When $f$ is unramified outside $x$, the divisors $R$, $S$ are zero and and with $T$ being again $u-$exceptional we obtain the required equality.
\end{proof}

An immediate consequence of this result is the following

\begin{cor}\label{endo}$\mbox{}$
\begin{enumerate}
\item If $f:(X,x)\to(Y,y)$ is a finite map of normal isolated singularities and $\vol(X,x)$ vanishes, then $\vol(Y,y)=0$.
\item If $(X,x)$ admits an endomorphism of degree at least two, then $\vol(X,x)=0$.
\end{enumerate}
\end{cor}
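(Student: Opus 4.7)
The plan is to derive both statements directly from the monotonicity inequality
\[
\vol(X,x) \geq (\deg f)\cdot \vol(Y,y)
\]
established in Theorem \ref{t2}, together with the finiteness of $\vol(X,x)$ from Proposition \ref{volfin} and the obvious non-negativity $\vol(X,x)\geq 0$.

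For part (i), I would simply observe that $\deg f$ is a positive integer, so the inequality of Theorem \ref{t2} forces $\vol(Y,y)\leq \vol(X,x)/\deg f$. If the right-hand side vanishes, then non-negativity of $\vol(Y,y)$ gives $\vol(Y,y)=0$. This step is essentially immediate and the only thing to check is that the hypotheses of Theorem \ref{t2} are satisfied, which they are by assumption.

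For part (ii), let $f\colon (X,x)\to(X,x)$ be a finite self-map with $d=\deg f\geq 2$. Applying Theorem \ref{t2} with $(Y,y)=(X,x)$ yields
\[
\vol(X,x)\geq d\cdot \vol(X,x),
\]
i.e.\ $(1-d)\vol(X,x)\geq 0$. Since $d\geq 2$ and $\vol(X,x)\geq 0$, and since $\vol(X,x)$ is finite by Proposition \ref{volfin} (so we may freely rearrange without worrying about $\infty-\infty$), this forces $\vol(X,x)=0$.

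The only subtlety — and it is minor — is confirming that an endomorphism of degree $\geq 2$ of the germ $(X,x)$ qualifies as a finite map of normal isolated singularities in the sense needed by Theorem \ref{t2}, i.e.\ a finite morphism with set-theoretic fiber $\{x\}$ over $x$; this is part of the standing convention for what ``endomorphism of $(X,x)$'' means in this context, so no real work is required. There is no genuine obstacle in either part; the corollary is a formal consequence of the preceding theorem combined with the finiteness of local volumes.
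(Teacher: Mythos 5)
Your proof is correct and matches the paper's intent exactly: the paper presents this corollary as an immediate consequence of Theorem \ref{t2}, and your derivation (combining the monotonicity inequality with non-negativity and the finiteness from Proposition \ref{volfin}) is precisely the argument being left to the reader.
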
 

In the surface case, \cite[Thm.2.8]{JW1} shows that $\vol(X,x)=0$ is equivalent to saying that $X$ has log-canonical singularities in the sense of \cite[Rem.2.4]{JW1}. In the $\mathbb Q-$Gorenstein case, this coincides with the usual definition of log-canonical. In higher dimension, as an immediate consequence of Proposition \ref{ovan}, or by \cite[Thm.4.2]{I2} it follows:

\begin{prop}\label{vn}Let $(X,x)$ be a normal complex quasi-projective normal isolated singularity of dimension $n$ at least two. Then $\vol(X,x)=0$ if, and only if, for all (any) log-resolutions $\pi:\widetilde X\to X$ with $E$ the reduced fiber over $x$, one has that 
$$\pi_*\mathcal O_{\widetilde X}(m(K_{\widetilde X}+E))=\mathcal O_X(mK_X),$$ for all non-negative $m$ and if, and only if, $\lambda_m(X,x)=0$ for all non-negative $m$.\end{prop}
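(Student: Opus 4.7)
The plan is to reduce all three conditions to a direct application of Proposition \ref{ovan} together with the identifications already established earlier in the section. First, I recall from the discussion preceding the statement that for any log-resolution $\pi:(\widetilde X,E)\to(X,x)$ one has
\[
\vol(X,x)=\vol_x(K_{\widetilde X}+E)\qquad\text{and}\qquad \lambda_m(X,x)=\hl(m(K_{\widetilde X}+E)),
\]
and in particular both invariants are independent of the chosen log-resolution, which legitimizes the ``for all (any)'' phrasing.

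Next, the equivalence between the sheaf equality $\pi_*\mathcal O_{\widetilde X}(m(K_{\widetilde X}+E))=\mathcal O_X(mK_X)$ and the vanishing $\lambda_m(X,x)=0$ is immediate from the very definition of the Morales plurigenus as the length of the skyscraper quotient
\[
\mathcal O_X(mK_X)\big/\pi_*\mathcal O_{\widetilde X}(m(K_{\widetilde X}+E)),
\]
so that condition $(b)$ and condition $(c)$ are tautologically the same statement.

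Finally, to connect these to $\vol(X,x)=0$, I apply Proposition \ref{ovan} to the Cartier divisor $D=K_{\widetilde X}+E$ on $X'=\widetilde X$. Since a log-resolution is in particular nonsingular, $\widetilde X$ coincides with its own normalization and the pullback $\widetilde D$ appearing in Proposition \ref{ovan} is simply $D$ itself. Proposition \ref{ovan} therefore yields that $\vol_x(K_{\widetilde X}+E)=0$ if and only if $\hl(m(K_{\widetilde X}+E))=0$ for every $m\geq 0$, which via the identifications above is precisely the equivalence of $(a)$ and $(c)$.

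There is essentially no real obstacle once Proposition \ref{ovan} is available; the only point worth checking is the trivial observation that $\widetilde X$ is smooth, so the reduction to the normalization in the statement of Proposition \ref{ovan} is vacuous. The statement therefore follows by stitching together these three elementary steps, and the alternative invocation of \cite[Thm.4.2]{I2} simply provides an independent proof of the same implication $(a)\Rightarrow(c)$ through Ishii's analysis of plurigenera growth.
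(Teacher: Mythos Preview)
Your proposal is correct and follows exactly the route the paper itself indicates: the paper presents this proposition with no detailed proof, stating only that it is ``an immediate consequence of Proposition \ref{ovan}, or by \cite[Thm.4.2]{I2},'' and you have simply (and correctly) spelled out the elementary identifications needed to invoke Proposition \ref{ovan} with $D=K_{\widetilde X}+E$ on the smooth model $\widetilde X$. Your remark that smoothness of $\widetilde X$ makes the normalization step in Proposition \ref{ovan} vacuous is the only point requiring a word, and you handled it.
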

\noindent In the previous result, we understand $\mathcal O_X(mK_X)$ as the sheaf of sections associated to a Weil canonical class $K_X$ chosen together with a canonical class on $\widetilde X$ such that $\pi_*K_{\widetilde X}=K_X$ as Weil divisors. 

\begin{rem} In the $\mathbb Q$-Gorenstein case, the conclusion of Proposition \ref{vn}, as in the case of surfaces, is the same as saying that $X$ is log-canonical. This result also appears in \cite{tw}. In general, following \cite{dFH}, we say $X$ is log-canonical if there exists an effective $\mathbb Q-$boundary $\Delta$ such that the pair $(X,\Delta)$ is log-canonical. With this definition, an inspection of \cite[Ex.4.20]{BdFF} and \cite[Ex.5.4]{BdFF} shows that there exist non $\mathbb Q-$Gorenstein isolated singularities $(X,x)$ that are not log-canonical, but $\vol(X,x)=0$.\end{rem}

Another result of Ishii (\cite[Thm.5.6]{I2}) that we translate to volumes studies hyperplane sections of normal isolated singularities.

\begin{prop} Let $(X,x)$ be an complex normal quasi-projective isolated singularity of dimension $n$ at least three. Let $(H,x)$ be a hyperplane section of $(X,x)$ that is again a normal isolated singularity. If $\vol(X,x)>0$, then $\vol(H,x)>0$.\end{prop}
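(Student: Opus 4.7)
The plan is to reduce the statement to one at the level of Morales plurigenera, where it is essentially Ishii's hyperplane-section theorem \cite[Thm.5.6]{I2}. By Proposition \ref{vn} applied to $(X,x)$, the hypothesis $\vol(X,x)>0$ produces an integer $m_0$ with $\lambda_{m_0}(X,x)\neq 0$; since $(H,x)$ is itself a normal isolated singularity of dimension $n-1\geq 2$, the same Proposition \ref{vn} reduces $\vol(H,x)>0$ to exhibiting some $m_1$ with $\lambda_{m_1}(H,x)\neq 0$. So the substance is a plurigenus-level assertion, and no quantitative growth comparison is required.

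To produce such a plurigenus on $H$, I would fix a log-resolution $\pi:(\widetilde X,E)\to(X,x)$ whose restriction to the proper transform $\widetilde H$ of $H$ is itself a log-resolution $\pi|_{\widetilde H}:(\widetilde H,E_H)\to(H,x)$, with $\widetilde H$ smooth and meeting $E$ transversely so that $E_H=E|_{\widetilde H}$ as reduced divisors; such a simultaneous log-resolution exists by standard functorial resolution combined with Bertini. Near $x$ the hyperplane section $H$ is locally principal, cut out by a single regular function $f$, so $\pi^{*}H=\widetilde H+F$ with $F$ effective and $\pi$-exceptional, whence $\widetilde H$ is locally $\pi$-linearly equivalent to $-F$. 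Adjunction $K_{\widetilde H}=(K_{\widetilde X}+\widetilde H)|_{\widetilde H}$ then yields, locally near the fiber,
$$K_{\widetilde H}+E_H=(K_{\widetilde X}+E+\widetilde H)|_{\widetilde H}=(K_{\widetilde X}+E)|_{\widetilde H}-F|_{\widetilde H}.$$
A class in $\lambda_{m_0}(X,x)$ is represented by an $m_0$-pluricanonical form $\omega$ on $\widetilde X\setminus E$ with pole of order $>m_0$ along some component $E_i\subset E$. Multiplying by $f^{m_0}$ and applying the Poincar\'e residue along $\widetilde H$ (equivalently, the adjunction isomorphism $m_0(K_{\widetilde X}+\widetilde H)|_{\widetilde H}=m_0 K_{\widetilde H}$) produces a candidate $m_0$-pluricanonical form $\omega_H$ on $\widetilde H\setminus E_H$ whose order along the component $E_i\cap\widetilde H$ of $E_H$ one computes to still exceed $m_0$, witnessing $\lambda_{m_0}(H,x)\neq 0$.

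The main obstacle is ensuring that the residue $\omega_H$ is simultaneously non-zero and retains the bad pole along some component of $E_H$: a badly chosen hyperplane section could force $\omega|_{\widetilde H}$ to vanish identically, or the offending component $E_i$ might miss $\widetilde H$ altogether. The normality and isolated nature of $(H,x)$, together with Bertini-type genericity in the choice of simultaneous log-resolution, resolve both issues, and the order-of-vanishing computation uses essentially that every component $E_i$ mapping to $x$ appears with positive coefficient in $F$ (because $f$ vanishes at $x$). This is precisely the technical heart of Ishii's argument in \cite[Thm.5.6]{I2}, after which Proposition \ref{vn} applied to $(H,x)$ finishes the proof.
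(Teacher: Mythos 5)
Your reduction is exactly what the paper does: it offers no independent proof of this proposition, presenting it as Ishii's hyperplane-section theorem \cite[Thm.5.6]{I2} translated to volumes via Proposition \ref{vn} (the equivalence $\vol=0\Leftrightarrow\lambda_m=0$ for all $m$, in dimension $\geq 2$, applied to both $(X,x)$ and $(H,x)$). Your sketch of the residue/adjunction mechanism is a plausible reconstruction of Ishii's argument, but since you ultimately defer the technical heart to \cite[Thm.5.6]{I2}, the approach coincides with the paper's.
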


\subsection{The Kn\"oller plurigenera} 

Another notion of plurigenera for a normal isolated singularity $(X,x)$, different from $\delta_m(X,x)$ and $\lambda_m(X,x)$, was introduced by Kn\"oller in \cite{Kn} and can be defined as
$$\gamma_m(X,x)=\dim\frac{\mathcal O_X(mK_X)}{\pi_*\mathcal O_{\widetilde X}(mK_{\widetilde X})}$$ for $\pi:\widetilde X\to X$ an arbitrary resolution of singularities. This is again an invariant of the singularity $(X,x)$, independent of the chosen resolution. The asymptotic behavior of $\gamma_m(X,x)=\hl(mK_{\widetilde X})$ is studied in \cite{I2}. Denoting
$$\vol_{\gamma}(X,x)=_{\rm def}\vol_x(K_{\widetilde X}),$$
the result in \cite[Thm.2.1]{I2}, or Proposition \ref{ovan} can be rephrased as:
\begin{prop} For a normal algebraic complex isolated singularity $(X,x)$ of dimension at least two, the following are equivalent:
\begin{enumerate}
\item $\vol_{\gamma}(X,x)=0$
\item $\gamma_m(X,x)=0$ for all non-negative $m$.
\end{enumerate}
\end{prop}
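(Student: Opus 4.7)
The proposition is essentially a direct corollary of Proposition \ref{ovan} (together with the identifications set up just before the statement), so the plan is simply to verify that the hypotheses of that proposition apply and that the two quantities appearing in the conclusion match $\vol_\gamma(X,x)$ and the $\gamma_m(X,x)$.

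First I would fix an arbitrary resolution $\pi:\widetilde X\to X$ and take $D=K_{\widetilde X}$ as the Cartier divisor to which Proposition \ref{ovan} will be applied. Since $\widetilde X$ is already non-singular, it is normal, so the pullback $\widetilde D$ to the normalization of $\widetilde X$ coincides with $K_{\widetilde X}$ itself; this removes the normalization step in the hypothesis of Proposition \ref{ovan}.

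Next I would identify $\gamma_m(X,x)$ with $\hl(mK_{\widetilde X})$. Choose Weil canonical classes on $X$ and $\widetilde X$ compatibly so that $\pi_*K_{\widetilde X}=K_X$. Because $\pi$ is an isomorphism over $X\setminus\{x\}$ and $X$ is normal of dimension at least two, for any open affine neighbourhood $U$ of $x$ we have
\[
H^0(U\setminus\{x\},\mathcal O_X(mK_X))=H^0(\pi^{-1}U\setminus E,\mathcal O_{\widetilde X}(mK_{\widetilde X})),
\]
so by Remark \ref{r1}(ii) (or the description via the cokernel of $\pi_*\mathcal O_{\widetilde X}(mK_{\widetilde X})\hookrightarrow i_*i^*\pi_*\mathcal O_{\widetilde X}(mK_{\widetilde X})$ in Remark \ref{r1}(i)) one reads off
\[
\gamma_m(X,x)=\dim\frac{\mathcal O_X(mK_X)}{\pi_*\mathcal O_{\widetilde X}(mK_{\widetilde X})}=\hl(mK_{\widetilde X}).
\]
By definition, $\vol_\gamma(X,x)=\vol_x(K_{\widetilde X})$.

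Having made these identifications, the equivalence is just the content of Proposition \ref{ovan} applied to $D=K_{\widetilde X}$: the condition $\vol_x(K_{\widetilde X})=0$ is equivalent to $\hl(mK_{\widetilde X})=0$ for every $m\geq 0$. There is no real obstacle here; the only mild subtlety is keeping the various choices of canonical divisor on $X$ and $\widetilde X$ consistent so that the cokernel computed on $\widetilde X$ literally equals the Kn\"oller quotient $\mathcal O_X(mK_X)/\pi_*\mathcal O_{\widetilde X}(mK_{\widetilde X})$ on $X$, which was already used implicitly when defining $\vol_\gamma$ independently of the resolution.
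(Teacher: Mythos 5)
Your proposal is correct and matches the paper's approach exactly: the paper offers no separate argument, presenting the proposition as a rephrasing of Proposition \ref{ovan} (or of Ishii's theorem) applied to $D=K_{\widetilde X}$, with $\gamma_m(X,x)=\hl(mK_{\widetilde X})$ and $\vol_\gamma(X,x)=\vol_x(K_{\widetilde X})$ by definition. The only nitpick is that an arbitrary resolution need not be an isomorphism over $X\setminus\{x\}$; either choose one that is (legitimate, since $\gamma_m$ is resolution-independent), or note that for $m\geq 0$ the identification $i_*i^*\pi_*\mathcal O_{\widetilde X}(mK_{\widetilde X})=\mathcal O_X(mK_X)$ still holds because discrepancies over the smooth locus are nonnegative.
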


\noindent The following remark was kindly suggested by T. de Fernex.
\begin{rem}\label{can1}In \cite{dFH}, the authors generalize the notion of canonical singularities to normal varieties that are not necessarily $\mathbb Q-$Gorenstein and it is a consequence of \cite[Prop.8.2]{dFH} that a normal variety $X$ has canonical singularities if, and only if, for all sufficiently divisible $m\geq1$ and all (any) resolution $\pi:\widetilde X\to X$, it holds that $$\pi_*\mathcal O_{\widetilde X}(mK_{\widetilde X})=\mathcal O_X(mK_X),$$ with $K_X$ and $K_{\widetilde X}$ chosen such that $\pi_*K_{\widetilde X}=K_X$ as Weil classes.
\par When $(X,x)$ is an isolated singularity, since the $\limsup$ in the definition of $\vol_{\gamma}(X,x)$ is replaceable by $\lim$ for similar arguments as in the case of $\vol(X,x)$, the vanishing $\vol_{\gamma}(X,x)=0$ is equivalent to $(X,x)$ being canonical in the sense of \cite{dFH}.
\par Since in any case $\vol_{\gamma}(X,x)\geq\vol(X,x)$, we see that $\vol(X,x)=0$ for canonical singularities.
\end{rem}

We show that $\vol_{\gamma}$ does not exhibit the same monotonicity properties as $\vol(X,x)$ with respect to finite maps of normal isolated singularities by constructing a $\mathbb Q$-Gorenstein non-canonical isolated singularity carrying endomorphisms of arbitrarily high degree. 

\begin{exap} Let $(X,x)$ be the cone over $V=\mathbb P^{n-1}$ corresponding to the polarization $H=\mathcal O_{\mathbb P^{n-1}}(n+1)$. By Examples \ref{cone1} and \ref{cone2}, $$\vol_{\gamma}(X,x)=n\cdot\int_0^{\infty}\vol(K_V+H-tH)dt=n\cdot\int_0^{\infty}(1-t(n+1))^{n-1}dt=\frac 1{n+1}>0,$$ therefore $(X,x)$ is non-canonical and the other requirements are met.\end{exap}

However, we can prove the opposite to the inequality of Theorem \ref{t2} in the unramified case.

\begin{prop} Let $f:(X,x)\to(Y,y)$ be a finite morphism of complex normal isolated singularities of dimension $n$ at least two. Assume that $f$ is unramified away from $x$. Then $$\vol_{\gamma}(X,x)\leq(\deg f)\cdot\vol_{\gamma}(Y,y).$$\end{prop}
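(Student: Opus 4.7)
The plan is to adapt the argument from the proof of Theorem~\ref{t2}, this time starting from $K_{\widetilde X}$ rather than $K_{\widetilde X}+E$, and invoking Lemma~\ref{l2}(i)---which says $\vol_x$ \emph{decreases} upon adding an effective divisor lying over $x$---in place of Lemma~\ref{l3}, so that the direction of the inequality is reversed.

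First I would set up compatible log-resolutions exactly as in Theorem~\ref{t2}: pick a log-resolution $\rho:(\widetilde Y,F)\to(Y,y)$, let $Z$ be the normalization of $\widetilde Y$ in the function field of $X$, and choose a log-resolution $u:(\widetilde X,E)\to(X,x)$ factoring through a resolution of $Z$. This produces a generically finite lift $\widetilde f:\widetilde X\to\widetilde Y$ of degree $\deg f$. Because $f$ is finite and unramified away from $x$, it is étale there, and consequently the induced morphism $\widetilde X\setminus E\to\widetilde Y\setminus F$ is étale as well.

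Since both $\widetilde X$ and $\widetilde Y$ are smooth, Riemann-Hurwitz for the generically finite $\widetilde f$ gives
$$K_{\widetilde X}=\widetilde f^*K_{\widetilde Y}+R,$$
where $R$ is the effective ramification divisor. Étaleness of $\widetilde f$ outside $E$ forces the support of $R$ to lie in $E$, so $R$ lies over $x$. Lemma~\ref{l2}(i) then yields
$$\vol_{\gamma}(X,x)=\vol_x(K_{\widetilde X})=\vol_x(\widetilde f^*K_{\widetilde Y}+R)\leq\vol_x(\widetilde f^*K_{\widetilde Y}).$$
Because $f^{-1}(\{y\})=\{x\}$ set-theoretically, Proposition~\ref{pfp} applied to $(f,\widetilde f)$ with $D=K_{\widetilde Y}$ reduces to a one-term sum and gives
$$\vol_x(\widetilde f^*K_{\widetilde Y})=(\deg f)\cdot\vol_y(K_{\widetilde Y})=(\deg f)\cdot\vol_{\gamma}(Y,y).$$
Combining the two displayed relations proves the claim.

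The main obstacle is the opening setup: producing $\widetilde f$ as an actual morphism between smooth log-resolutions (rather than merely a rational map) and verifying that the ramification divisor is supported on $E$. Both points are handled by the very same normalization-in-the-function-field construction used in the proof of Theorem~\ref{t2}, so in effect the hard technical work has already been carried out there; the genuinely new observation is simply that replacing $K_{\widetilde X}+E$ by $K_{\widetilde X}$ swaps the roles of Lemmas \ref{l2} and \ref{l3} and flips the inequality.
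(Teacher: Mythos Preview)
Your proof is correct and follows essentially the same route as the paper's own argument: write $K_{\widetilde X}=\widetilde f^*K_{\widetilde Y}+R$ with $R$ effective, observe that the unramifiedness hypothesis forces $R$ to lie over $x$, apply Lemma~\ref{l2}(i) to drop $R$, and finish with Proposition~\ref{pfp}. The only cosmetic difference is that the paper simply asserts the existence of \emph{good} resolutions with a compatible lift $\widetilde f$ (which is legitimate since $X\setminus\{x\}$ and $Y\setminus\{y\}$ are already smooth), whereas you route through the normalization-in-the-function-field construction of Theorem~\ref{t2}; both produce the same picture, and your remark that \'etaleness outside $E$ forces $\mathrm{supp}(R)\subset E$ is exactly the paper's ``exceptional for $\pi$ by assumption''.
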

\begin{proof} Construct good resolutions $\pi:(\widetilde X,E)\to(X,x)$ and $\rho:(\widetilde Y,F)\to(Y,y)$ and a lift $\widetilde f:\widetilde X\to\widetilde Y$ for $f$. Then the ramification divisor $K_{\widetilde X}-\widetilde f^*K_{\widetilde Y}$ is effective. It is also exceptional for $\pi$ by assumption. We conclude by Proposition \ref{pfp} and Lemma \ref{l2}.
\end{proof}

\begin{cor} Under the assumptions of the previous proposition, if $(Y,y)$ has canonical singularities, then $(X,x)$ also has canonical singularities.\end{cor}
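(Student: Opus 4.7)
The statement is an immediate combination of the preceding proposition with the characterization of canonical singularities via $\vol_{\gamma}$ recorded in Remark \ref{can1}. My plan is to chain these two inputs.

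First I would observe that by Remark \ref{can1}, the hypothesis that $(Y,y)$ has canonical singularities in the sense of \cite{dFH} is equivalent to the vanishing $\vol_{\gamma}(Y,y)=0$. (This is where we implicitly use that $(Y,y)$ is an isolated singularity, so that the $\limsup$ in the definition of $\vol_{\gamma}$ is an actual limit and the vanishing really does match the cohomological characterization of canonicity from \cite[Prop.8.2]{dFH}.) Then I would apply the previous proposition directly: since $f$ is a finite morphism of normal isolated singularities of dimension $n\geq 2$ that is unramified away from $x$, we have
\[
0\leq\vol_{\gamma}(X,x)\leq(\deg f)\cdot\vol_{\gamma}(Y,y)=0,
\]
where the left inequality is just the nonnegativity of $\vol_{\gamma}$ (it is an asymptotic dimension count).

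Thus $\vol_{\gamma}(X,x)=0$, and invoking Remark \ref{can1} again, now in the direction $\vol_{\gamma}(X,x)=0\Rightarrow(X,x)$ canonical, gives the conclusion. There is essentially no obstacle in the present statement: all of the content sits in the previous proposition (which packaged the ramification formula together with Lemma \ref{l2} and Proposition \ref{pfp}) and in \cite[Prop.8.2]{dFH}, which underlies the equivalence in Remark \ref{can1}. The only thing one should check while writing this up is that both endpoints of the chain of equivalences in Remark \ref{can1} are being applied in the appropriate direction, and that the nonnegativity of $\vol_{\gamma}$ is invoked (so that the one-sided bound forces actual vanishing rather than merely smallness).
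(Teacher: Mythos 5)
Your argument is exactly the paper's: the paper's proof simply states that the corollary is an immediate consequence of the proposition and Remark \ref{can1}, and your write-up spells out precisely that chain (canonicity of $(Y,y)$ $\Leftrightarrow$ $\vol_{\gamma}(Y,y)=0$, then the inequality and nonnegativity force $\vol_{\gamma}(X,x)=0$, then back to canonicity of $(X,x)$). Correct and the same approach, just with the details made explicit.
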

\begin{proof} The result is an immediate consequence of the proposition and Remark \ref{can1}.
\end{proof} 

\begin{rem} In this paper we refer to $\vol(X,x)$  and not to $\vol_{\gamma}(X,x)$ as the volume of the isolated singularity $(X,x)$.\end{rem}

\subsection{Examples}
We begin with a series of examples of normal isolated singularities $(X,x)$ where the volume is zero. We can usually show this by explicit computation of plurigenera or by noticing they carry non-invertible endomorphisms.

\begin{exap}[$\mathbb Q-$Gorenstein log-canonical case] Let $(X,x)$ be a $\mathbb Q-$Gorenstein log-canonical normal isolated singularity of dimension $n$. It is a consequence of Proposition \ref{vn} that $\vol(X,x)=0$, but we can also compute explicitly that $\lambda_m(X,x)=0$ for all non-negative, sufficiently divisible $m$. Pick $\pi:\widetilde X\to X$ a log-resolution with $E$ the reduced fiber over $x$. Since $\pi^*K_X$ is defined as a $\mathbb Q-$divisor, by Lemma \ref{invpull}, $$\lambda_m(X,x)=\hl(m(K_{\widetilde X}+E))=\hl(m(K_{\widetilde X}+E-\pi^*K_X))$$ for $m$ divisible enough so that $mK_X$ is Cartier. But $K_{\widetilde X}+E-\pi^*K_X$ is $\pi-$exceptional and effective by the log-canonical condition, so $\hl(m(K_{\widetilde X}+E-\pi^*K_X))=0$ for all sufficiently divisible $m$. By homogeneity, it follows that $\vol(X,x)=0$.\end{exap}

\begin{exap}[Finite quotient isolated singularities] Let $G$ be a finite group acting algebraically on a complex algebraic affine manifold $M$. Let $X=\Spec(\mathbb C[M]^G)$ be the quotient and assume it has a normal isolated singularity $x$. Then by Proposition \ref{pfp} and by the previous example, following ideas in Theorem \ref{t2}, we obtain $\vol(X,x)=0$.\end{exap}

\begin{exap}[Toric isolated singularities] We use the notation in Example \ref{toric}. Let $\sigma$ be an $n-$dimensional pointed rational cone. The condition that $(X(\sigma),x_{\sigma})$ be an isolated singularity is the same as saying that all the faces of non-maximal dimension of $\sigma$ are spanned as cones by a set of elements of $N$ that can be extended to a basis. Affine toric varieties carry Frobenius non-invertible endomorphisms and one checks that they are actually endomorphisms of the singularity $(X(\sigma),x_{\sigma})$ i.e. totally ramified at the isolated singularity, so $\vol(X(\sigma),x_{\sigma})=0$ by Corollary \ref{endo}.
\par It can be checked that for a toric resolution $\pi:(X(\Sigma),E)\to (X(\sigma),x_{\sigma})$, the divisor $K_{X(\Sigma)}+E$ is negative without components lying over $x_{\sigma}$ and then $\vol(X,x)=0$ by Lemma \ref{l3}.
\end{exap}

\begin{exap}[Cusp singularities]Tsuchihashi's cusp singularities provide yet another example of isolated singularities $(X,x)$ with $\vol(X,x)=0$.  See \cite[6.3]{BdFF} or \cite[Thm.1.16]{W} for explanations and \cite{T} for more on cusp singularities.\end{exap}

One of the simplest classes of isolated singularities that may have non-zero volume are quasi-homogeneous singularities.

\begin{exap}[Quasi-homogeneous singularities]\label{qh} We follow \cite[Def.1.10]{W}. Let $r_0,\ldots,r_n$ be positive rational numbers. Call a polynomial $f(x_0,\ldots,x_n)$ quasi-homogeneous of type $(r_0,\ldots,r_n),$ if it is a linear combination of monomials $x_0^{a_0}\cdot\ldots\cdot x_n^{a_n}$ with $\sum_{i=0}^na_ir_i=1$. When such a polynomial is sufficiently general, it's vanishing locus in $\mathbb C^{n+1}$ has an isolated singularity at the origin. We denote this singularity $(X(f),0)$ and $r(f)=r_0+\ldots+r_n$. By \cite[Exap.1.15]{W}, 
$$\vol(X(f),0)=\left\{\begin{array}{cc}0,&\mbox{if }r(f)\geq 1\\ \frac{(1-r(f))^n}{r_0\cdot\ldots\cdot r_n},&\mbox{if }r(f)\leq 1\end{array}\right..$$\end{exap}

\begin{exap}[Surface case]\label{sf2}By Example \ref{sf1}, the volume of a normal isolated surface singularity $(X,x)$ can be computed as $$\vol(X,x)=-P\cdot P,$$ where $K_{\widetilde X}+E=P+N$ is the relative Zariski decomposition on a good resolution $\pi:\widetilde X\to X$. In \cite[Prop.2.3]{JW1}, an algorithm for computing $P$ is described in terms of the combinatorial data of the dual graph of a good resolution.\end{exap}

Although the quasi-homogeneous and surface cases provide non-zero examples, they always provide rational values for the volume of the singularity. We will see that cone singularities provide irrational volumes already in dimension three.

\begin{exap}[Cone singularities]\label{cone2} If $(X,0)$ is a cone singularity constructed as $$\Spec\bigoplus_{m\geq 0}H^0(V,\mathcal O_V(mH))$$ for $(V,H)$ a polarized non-singular projective variety of dimension $n-1$, then by Example \ref{cone1}, using that $K_Y+E$ restricts to $K_V$ on $E$ by adjunction, $$\vol(X,0)=n\cdot\int_0^{\infty}\vol(K_V-tH)dt.$$ We see right away that $\vol(X,0)>0$ if, and only if, $V$ is of general type.
\end{exap}

In similar flavor to an example of Urbinati in \cite{U}, following a suggestion of Lazarsfeld, we show that there exist cone singularities yielding an irrational volume.
\begin{exap}[Irrational volume]\label{irr1} Choose two general integral classes $D$ and $L$ in the ample cone of $E\times E$, where $E$ is a general elliptic curve. Then, by the Lefschetz Theorem (\cite[Thm.6.8]{D}), $2D$ is globally generated and we can construct $V$, the cyclic double cover (see \cite[Prop.4.1.6]{L}) of $E\times E$ over a general section of $2D$. Let $g:V\to E\times E$ be the cover map. Note that $K_V=g^*D$. The volume of the cone singularity $(X,0)$ associated to $(V,g^*L)$ is then 
$$3\cdot\int_{0}^{\infty}\vol(g^*(D-tL))dt.$$ 
On abelian varieties, pseudo-effective and nef are equivalent notions for divisors and the volumes of such are computed as self-intersections. Let 
$$m=_{\rm def}\max\{t:\ D-tL\mbox{ is nef}\}.$$ It can be also characterized as the smallest solution to the equation $$(D-tL)^2=0.$$
One can compute, $$\vol(X,0)=\frac{4D^2L^2-4(DL)^2}{L^2}\cdot m+\frac{2(DL)D^2}{L^2}.$$ The study in \cite[Sec.1.5.B]{L} shows that the nef cone of $E\times E$ is a round quadratic cone for general $E$. Hence general choices for $D$ and $L$ produce a quadratic irrational $m$.
\end{exap} 

In \cite{JW1} it is proved that $\vol(X,x)$ is a topological invariant of the link of the surface singularity $(X,x)$. We give an example showing that this may fail already in dimension three. The idea for the construction comes from \cite[pag.36]{BdFF} and \cite[Ex.4.23]{BdFF} where, using the Ehresmann-Feldbau theorem, it is shown that if $f:(V,A)\to T$ is a smooth polarized family of non-singular projective varieties, then the links of the cone singularities associated to $(V_t,A_t)$ have the same diffemorphism type as $t$ varies in $T$. This is used to show that if $V$ is the family of blow-ups of $\mathbb P^2$ at ten or more points and if $(C_t,0_t)$ denotes the three dimensional cone singularity over $(V_t,A_t)$, for some appropriate polarization $A$, then the volume $\Vol(C_t,0_t)$ (that we discuss in the next section) is positive for very general $t$, but it does vanish for particular values of $t$. Since the $V_t$'s are all rational surfaces, $\vol(C_t,0_t)=0$ for any $t$, but we can construct an example where $\vol(C_t,0_t)$ is non-constant by passing to double covers of the family of blow-ups of $\mathbb P^2$ at three distinct points.

\begin{exap}\label{nottop}Let $g:S\to T$ be the smooth family of blow-ups of $\mathbb P^2$ at the three distinct points. There are line bundles $H$ and $E$ on $S$ such that for each $t\in T$, the divisor $H_t$ is the pullback of the hyperplane bundle via the blow-down to $\mathbb P^2$ and $E_t=E_{t,1}+E_{t,2}+E_{t,3}$ is the exceptional divisor of the blow-up. The geometry of $S_t$ differs according to whether $t$ consists of three collinear or non-collinear points, with the latter being the generic case. In both cases, $3H_t-E_t=-K_{S_t}$ is big and globally generated and $4H_t-E_t$ is ample and globally generated. It follows by \cite[Ex.1.8.23]{L} that $4(4H_t-E_t)$ is very ample. 
\par Let $t_0$ be a set of collinear points and choose a smooth divisor in the linear system $|4(4H_{t_0}-E_{t_0})|$ corresponding to a section $s_{t_0}$. By cohomology and base change (\cite[Thm.III.12.11]{H}), the section $s_{t_0}$ extends in a neighborhood of $t_0$ to a section $s$ of $4(4H-E)$. By further restricting $T$, we can assume that $s_t$ vanishes along a smooth divisor for all $t$ (see  \cite[Ex.III.10.2]{H}). Let $h:V\to S$ be the double cover corresponding to $s$. By \cite[Prop.4.1.6]{L}, the composition $f:V\to T$ is again a smooth family. We endow it with the fiberwise polarization given by $A=h^*(40H-3E)$. By results on \cite[pag.36]{BdFF}, the links of the cone singularities $(C_t,0_t)$ associated to $(V_t,A_t)$ are all diffeomorphic. We compute $\vol(C_t,0_t)$ and show that we get different answers when the tree points to be blown-up are collinear than when they are non-collinear. Note that $$K_t=_{\rm def}K_{V_t}=h_t^*(K_{S_t}+2(4H_t-E_t))=h_t^*(5H_t-E_t).$$
By Example \ref{cone2}, $$\vol(C_t,0_t)=3\cdot\int_0^{\infty}\vol(h^*(5H_t-E_t-s(40H_t-3E_t)))ds=6\cdot\int_0^{\infty}\vol((5-40s)H_t-(1-3s)E_t)ds.$$
We are reduced to working with volumes on $\mathbb P^2$ blown-up at three distinct points. For this we can use Zariski decompositions (see \cite[Thm.2.3.19, Cor.2.3.22]{L}) that can be explicitly computed for $aH_t+bE_t$ with $a,b\in\mathbb Z$ to show that $\vol(C_t,0_t)$ yields different values when $t$ corresponds to collinear points than when it corresponds to non-collinear points.

\end{exap}

\section{An alternative notion of volume due to Boucksom, de Fernex and Favre}

In this section we prove an inequality between our definition of volume for normal isolated singularities and one other volume, recently introduced in \cite{BdFF} also as a generalization of Wahl's work. We assume some familiarity with the notation of \cite{BdFF}.

\par Let $(X,x)$ be a complex normal quasi-projective isolated singularity of dimension $n$. A Weil canonical class $K_X$ on $X$ induces canonical classes $K_{X_{\pi}}$ on all resolutions $\pi:X_{\pi}\to X$ and we form the $\pi-$exceptional log-discrepancy divisor 
$$(\mathcal A_{\mathcal X/X})_{\pi}=_{\rm def}K_{X_{\pi}}+\Env(-K_X)_{\pi}+1_{X_{\pi}/X},$$ 
where $1_{X_{\pi}/X}$ is the reduced divisorial component of the full exceptional locus of $\pi$. The envelope of the canonical class, $\Env(-K_X)_{\pi}$, is computed as in \cite[Def.2.3]{BdFF}. Intuitively, $-\Env(-K_X)_{\pi}$ computes the pull-back $\pi^*K_X$. Pulling-back Weil divisors is a subtle problem, but we mention that $-\Env(-K_X)_{\pi}$ is indeed the pull-back of $K_X$ when $X$ is $\mathbb Q-$Gorenstein, which should serve as justification for calling $(\mathcal A_{\mathcal X/X})_{\pi}$ a log-discrepancy divisor. 
\par As $\pi$ varies through all possible resolutions $\pi:X_{\pi}\to X$, the divisors $K_{X_{\pi}}$, $\Env(-K_X)_{\pi}$, $1_{X_{\pi}/X}$ and $(\mathcal A_{\mathcal X/X})_{\pi}$ all glue to form $b-$divisors over $X$, i.e., each is an association of a divisor to every resolution $\pi$, an association that is compatible with push-forwards of Weil classes. $b-$divisors appear naturally in the study of divisorial valuations on the fraction field of $X$ and provide an efficient way of working simultaneously with all the resolutions of $X$. We denote by $\mathcal A_{\mathcal X/X}^0$ the component lying over $x$ of the $b-$divisor $\mathcal A_{\mathcal X/X}$. A consequence of the smoothness of $X\setminus\{x\}$ is: 

\begin{rem}\label{disceff} The $b-$divisor $\mathcal A_{\mathcal X/X}- \mathcal A_{\mathcal X/X}^0$ is effective and exceptional. \end{rem}

\begin{defn} Let $(X,x)$ be a complex normal quasi-projective isolated singularity of dimension $n$. The volume of $(X,x)$ in the sense of \cite{BdFF} is $$\Vol(X,x)=_{\rm def}-(\Env(\mathcal A_{\mathcal X/X}^0))^n.$$

The envelope of a $b-$divisor $D$, with a suitable boundedness condition,  is defined in \cite[Rem.2.15]{BdFF} as the piecewise infimum of the envelopes of $D_{\pi}$. The envelope of the divisor $D_{\pi}$ on $X_{\pi}$ is itself a $b-$divisor that is, in a suitable sense, the limit of the $\mathbb Q-$multiples $\mathcal O_m(1/m)$ of the relative Serre bundles $\mathcal O_m(1)$, which exist on the blow-ups of $X$ along the fractional ideal sheaves $\pi_*\mathcal O_{X_{\pi}}(mD)$. Intersections of nef $b-$divisors lying over $x$ are defined in \cite[Def.4.13]{BdFF}. They generalize the intersections $D_1\cdot\ldots\cdot D_n$ if the $D_i$ are all divisors on $X_{\pi}$ with support in the fiber over $x$.
\end{defn}

\begin{rem} The volume $\Vol(X,x)$ is also a generalization of Wahl's volume for isolated surface singularities by \cite[Prop.5.1]{BdFF}, therefore $\Vol(X,x)=\vol(X,x)$ in dimension two.\end{rem}

In arbitrary dimension, we prove the following inequality:

\begin{thm}\label{tcomp}Let $(X,x)$ be a complex normal quasi-projective isolated singularity of dimension $n$. Then $$\Vol(X,x)\geq\vol(X,x).$$\end{thm}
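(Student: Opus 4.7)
The plan is to realize both sides as intersection-theoretic limits on a cofinal system of blow-ups and then compare them via the monotonicity of top self-intersections of nef exceptional $b$-divisors developed in \cite{BdFF}. Fix a log-resolution $\pi\colon(\widetilde X,E)\to(X,x)$ and set $L=K_{\widetilde X}+E$. By Remark \ref{r2}, $L$ satisfies the hypothesis of Theorem \ref{Fujita}, so
$$\vol(X,x)=\vol_x(L)=\lim_{p\to\infty}\frac{\mult(\mathfrak a_p)}{p^n},\qquad \mathfrak a_p:=\pi_*\mathcal O_{\widetilde X}(pL),$$
and by Remark \ref{volbl} each $\mult(\mathfrak a_p)$ equals $\vol_x(\mathcal O_{Y_p}(1))$ on the normalized blow-up $\sigma_p\colon Y_p\to X$ of $\mathfrak a_p$.

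The next step is to identify $\vol_x(\mathcal O_{Y_p}(1))$ with $-(\mathcal M_p)^n$ for a nef exceptional $b$-divisor $\mathcal M_p$ over $X$ supported above $x$. Such an $\mathcal M_p$ is obtained from the Serre divisor of $\sigma_p$ after subtracting a Cartier representative of $p\pi_*L$, whose $\sigma_p$-relative class is numerically trivial and hence does not affect $\vol_x$ by Corollary \ref{numinv}; effective exceptional contributions with no components over $x$ can then be discarded using Lemma \ref{l3}. The equality $\vol_x(\mathcal M_p)=-(\mathcal M_p)^n$ for a $\sigma_p$-nef exceptional divisor supported over $x$ is the standard Kleiman-type intersection computation (compare the surface Example \ref{sf1}).

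The core of the proof is then the $b$-divisor comparison $\mathcal M_p\leq p\cdot\Env(\mathcal A^0_{\mathcal X/X})$: once this holds, the monotonicity of the top self-intersection of nef exceptional $b$-divisors lying over $x$ (see \cite[Sec.4]{BdFF}) yields
$$\mult(\mathfrak a_p)=-(\mathcal M_p)^n\leq -p^n\cdot\Env(\mathcal A^0_{\mathcal X/X})^n=p^n\cdot\Vol(X,x),$$
and dividing by $p^n$ and passing to the limit gives $\vol(X,x)\leq\Vol(X,x)$. The domination itself comes from interpreting $\mathfrak a_p$ at the $b$-divisor level as $pK_{\widetilde X}+pE+p\Env(-K_X)$ minus an effective, exceptional correction concentrated on divisors with no components over $x$, namely $p(\mathcal A_{\mathcal X/X}-\mathcal A^0_{\mathcal X/X})$, which is effective and exceptional by Remark \ref{disceff}. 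The main obstacle will be performing this last $b$-divisor bookkeeping rigorously in the non-$\mathbb Q$-Gorenstein setting, where $\pi^*K_X$ is only meaningful through $-\Env(-K_X)$ and one must track the interaction between the exceptional components lying over $x$ and those that do not.
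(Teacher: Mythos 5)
Your route is genuinely different from the paper's, but as written it has two concrete gaps. The first is a direction error at the decisive step. For nef $b$-divisors lying over $x$, the monotonicity of \cite[Thm.4.14]{BdFF} says that $W\leq W'$ implies $-W^n\geq-(W')^n$ (this is exactly how the paper uses it: from $\Env(\mathcal A^0_{\mathcal X/X})\leq\Env((\mathcal A^0_{\mathcal X/X})_{\pi})$ it deduces $\Vol(X,x)\geq-(\Env((\mathcal A^0_{\mathcal X/X})_{\pi}))^n$). Hence your comparison $\mathcal M_p\leq p\cdot\Env(\mathcal A^0_{\mathcal X/X})$ would give $-(\mathcal M_p)^n\geq p^n\cdot\Vol(X,x)$, i.e.\ $\vol(X,x)\geq\Vol(X,x)$, the reverse of the statement. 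What you actually need is $\mathcal M_p\geq p\cdot\Env(\mathcal A^0_{\mathcal X/X})$.

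The second gap is the construction of $\mathcal M_p$ itself, which you defer as ``bookkeeping'' but which is the actual content of the theorem. To move from $\mathcal O_{Y_p}(1)$ to a nef divisor lying over $x$ you subtract ``a Cartier representative of $p\pi_*L=pK_X$''; but $K_X$ is only a Weil divisor, and in the non-$\mathbb Q$-Gorenstein case no multiple of it is Cartier, so no such representative exists --- this is precisely why \cite{BdFF} must work with $\Env(-K_X)$ rather than with $\pi^*K_X$. Relatedly, an exact intersection-theoretic formula $\mult(\mathcal I)=-(\mathcal M)^n$ for fractional ideals that are not $\mathfrak m$-primary is not available (the paper records it as an open question in Section 4, and \cite{CHST} shows such multiplicities can be irrational), so your second step cannot be taken for granted. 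Note also that $\mathfrak a_p=\pi_*\mathcal O_{\widetilde X}(p(K_{\widetilde X}+E))$ is not ``$p(K_{\widetilde X}+E+\Env(-K_X)_{\pi})$ minus an effective exceptional correction'' at the level of ideals: relating these two objects is exactly the inequality $\vol_x(K_{\widetilde X}+E+\Env(-K_X)_{\pi})\geq\vol_x(K_{\widetilde X}+E)$, which the paper proves by approximating the $\pi$-movable divisor $\Env(-K_X)_{\pi}$ by effective divisors with no components over $x$ and invoking Lemma \ref{l3} together with continuity. If you repair your construction by replacing the nonexistent $\pi^*(pK_X)$ with $-p\Env(-K_X)_{\pi}$, you will find yourself needing that same step, at which point your argument collapses onto the paper's (which, moreover, dispenses with Fujita approximation entirely).
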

\begin{proof} By \cite[Rem.2.15]{BdFF}, for any resolution $\pi:X_{\pi}\to X$, we have $\Env(\mathcal A_{\mathcal X/X}^0)\leq\Env((\mathcal A_{\mathcal X/X}^0)_{\pi})$ and the monotonicity property of intersection numbers in \cite[Thm.4.14]{BdFF} shows $$\Vol(X,x)\geq -(\Env((\mathcal A_{\mathcal X/X}^0)_{\pi}))^n.$$
By \cite[Rem.4.17]{BdFF}, the latter is equal to $\vol_x((\mathcal A_{\mathcal X/X}^0)_{\pi})$, since $\vol_x$ and envelopes both are computed from pushforward sheaves. Remark \ref{disceff} and Lemma \ref{excvan} yield $$\vol_x((\mathcal A_{\mathcal X/X}^0)_{\pi})=\vol_x((\mathcal A_{\mathcal X/X})_{\pi})=\vol_x(K_{X_{\pi}}+\Env(-K_X)_{\pi}+E),$$ where now $\pi:(X_{\pi},E)\to(X,x)$ is a log-resolution. Since $\vol(X,x)=\vol_x(K_{X_{\pi}}+E)$, it suffices to prove that $$\vol_x((K_{X_{\pi}}+E)+\Env(-K_X)_{\pi})\geq \vol_x(K_{X_{\pi}}+E).$$ 
By \cite[Lem.2.9]{BdFF}, $\Env(-K_X)_{\pi}$ is $\pi-$movable, hence there exists a sequence of effective divisors $D_m$ on $X_{\pi}$ without components over $x$, a sequence that converges to $\Env(-K_X)_{\pi}$ in $N^1(X_{\pi}/X)$. We conclude by the continuity of $\vol_x$ and Lemma \ref{l3}.
\end{proof}

\begin{rem}When $X$ is $\mathbb Q-$Gorenstein, \cite[Prop.5.3]{BdFF} shows that $\Vol(X,x)=\vol(X,x)$.\end{rem} 
\noindent Aiming to extend this result to the numerically Gorenstein case (see \cite[Def.2.24]{BdFF}), we start with a lemma inspired by the proof or \cite[Prop.5.3]{BdFF} that allows us to compute $\Vol(X,x)$ on a fixed resolution in a particular case:

\begin{lem}\label{bd} Let $\pi:(X_{\pi},E)\to(X,x)$ be a log-resolution of a normal isolated singularity of dimension $n$ and assume $\Env(-K_X)_{\pi}$ is $\pi-$nef. Then $$\Vol(X,x)=\vol_x(K_{X_{\pi}}+\Env(-K_X)_{\pi}+E).$$\end{lem}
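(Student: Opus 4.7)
The proof of Theorem \ref{tcomp} already establishes the inequality $\Vol(X,x) \geq \vol_x(K_{X_\pi} + \Env(-K_X)_\pi + E)$ (without any nefness hypothesis), so the task is to prove the reverse inequality under the assumption that $\Env(-K_X)_\pi$ is $\pi$-nef.

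The strategy is to use the $\pi$-nefness to show that the $b$-divisor envelope $\Env(\mathcal A_{\mathcal X/X}^0)$ is already realized on $X_\pi$, in the sense that for every further log-resolution $\rho: X_{\pi'} \to X_\pi$, the pullback $\rho^*((\mathcal A_{\mathcal X/X}^0)_\pi)$ computes the envelope on $X_{\pi'}$. First I would check that $\pi$-nefness of $\Env(-K_X)_\pi$ forces $\rho^*\Env(-K_X)_\pi$ to coincide with $\Env(-K_X)_{\pi'}$ up to a $\rho$-exceptional effective correction. This uses the characterization of $\Env(-K_X)$ as the piecewise maximal $\pi'$-antieffective $b$-divisor majorized by $-K_X$ together with the $\pi$-movability statement \cite[Lem.2.9]{BdFF}: the pullback of a $\pi$-nef divisor under a birational $\rho$ is $\pi'$-nef, hence it lies below the corresponding envelope, and the $\rho$-exceptional gap is effective. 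Next I would observe that such a $\rho$-exceptional effective correction does not affect the $b$-divisor $\Env(\mathcal A_{\mathcal X/X}^0)$, thanks to Remark \ref{disceff} and Lemma \ref{excvan}, which together say that the pushforward sheaves defining the envelope are insensitive to effective exceptional contributions.

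Granted this stabilization, the monotonicity of intersections of nef $b$-divisors in \cite[Thm.4.14]{BdFF} yields
\begin{equation*}
\Vol(X,x) = -(\Env(\mathcal A_{\mathcal X/X}^0))^n = -(\Env((\mathcal A_{\mathcal X/X}^0)_\pi))^n.
\end{equation*}
By \cite[Rem.4.17]{BdFF}, the right-hand side equals $\vol_x((\mathcal A_{\mathcal X/X}^0)_\pi)$, which in turn equals $\vol_x(K_{X_\pi} + \Env(-K_X)_\pi + E)$ by the same application of Remark \ref{disceff} and Lemma \ref{excvan} used in the proof of Theorem \ref{tcomp}. Combined with the reverse inequality already established there, this gives the claimed equality.

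The main obstacle is the stabilization step: making precise that $\pi$-nefness of $\Env(-K_X)_\pi$ is enough to pin down the $b$-divisorial envelope by its incarnation on $X_\pi$. Once this is in place, the rest is a routine application of the envelope/intersection dictionary of \cite{BdFF} and our Fujita approximation Theorem \ref{Fujita}; there is no additional analytic input needed.
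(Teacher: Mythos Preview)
Your approach is the same as the paper's, but your ``stabilization step'' has a gap that is precisely the point you flag as the main obstacle. You correctly handle the $\Env(-K_X)$ piece of $\mathcal A_{\mathcal X/X}$: in fact the paper invokes \cite[Cor.2.12]{BdFF} here, which says directly that a $\pi$-nef trace forces the envelope $b$-divisor to be Cartier and determined on $X_\pi$, so $\Env(-K_X)_{\pi'}=\rho^*\Env(-K_X)_\pi$ with \emph{no} correction term. Your route through \cite[Lem.2.9]{BdFF} and movability is more circuitous but would also work.

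What you do not address is the remaining summand $K_{X_{\pi'}}+1_{X_{\pi'}/X}$ versus $\rho^*(K_{X_\pi}+1_{X_\pi/X})$. This difference is not zero, and it is not covered by Remark~\ref{disceff} or Lemma~\ref{excvan}: one needs to know it is effective and $\rho$-exceptional, which amounts to the statement that the simple normal crossings pair $(X_\pi,E)$ is log canonical. The paper cites \cite[Lem.3.2]{BdFF} for exactly this, obtaining that $\mathcal A_{\mathcal X/X}-\overline{(\mathcal A_{\mathcal X/X})_\pi}$ is effective and exceptional over $X$. Only then do Remark~\ref{disceff} and Lemma~\ref{excvan} apply to pass to $\mathcal A_{\mathcal X/X}^0$ and conclude via \cite[Rem.4.17]{BdFF}. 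Your sentence ``such a $\rho$-exceptional effective correction does not affect the $b$-divisor $\Env(\mathcal A_{\mathcal X/X}^0)$'' implicitly assumes this comparison has already been made for the full log-discrepancy divisor, not just its $\Env(-K_X)$ part.

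Finally, Theorem~\ref{Fujita} plays no role here; neither the paper's argument nor yours actually uses it.
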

\begin{proof} \cite[Cor.2.12]{BdFF} proves that $\Env(-K_X)$ is Cartier, determined on $X_{\pi}$. Using \cite[Lem.3.2]{BdFF}, $$\mathcal A_{\mathcal X/X}-\overline{(\mathcal A_{\mathcal X/X})_{\pi}}$$ is effective and exceptional over $X$. The conclusion follows from Lemma \ref{excvan}, \cite[Cor.2.12]{BdFF}, Remark \ref{disceff} and \cite[Rem.4.17]{BdFF}.\end{proof}

\begin{prop}\label{numg} If $X$ is a numerically Gorenstein i.e. $\Env(K_X)+\Env(-K_X)=0$, then $$\Vol(X,x)=\vol(X,x).$$\end{prop}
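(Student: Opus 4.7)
The plan is to combine Lemma \ref{bd} and Corollary \ref{numinv} on a well-chosen log-resolution, using the numerically Gorenstein hypothesis to render $\Env(-K_X)_{\pi}$ a $\pi$-numerically trivial class; coupled with the bound $\Vol(X,x)\geq\vol(X,x)$ already supplied by Theorem \ref{tcomp}, this will yield the claimed equality.

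First, the identity $\Env(K_X)+\Env(-K_X)=0$ forces both envelopes to be Cartier $b$-divisors. Invoking \cite[Cor.2.12]{BdFF}, I would pick a log-resolution $\pi:(X_{\pi},E)\to(X,x)$ on which both $\Env(K_X)$ and $\Env(-K_X)$ are determined, so that on $X_{\pi}$ the identity reads
$$\Env(K_X)_{\pi}+\Env(-K_X)_{\pi}=0.$$
Both Cartier divisors $\Env(\pm K_X)_{\pi}$ are $\pi$-movable by \cite[Lem.2.9]{BdFF}. Since each is the negative of the other, and since the numerically Gorenstein condition is precisely the existence of a numerical Cartier pullback of $K_X$ as a $b$-divisor, the class $\Env(-K_X)_{\pi}\in N^1(X_{\pi}/X)_{\mathbb R}$ is $\pi$-numerically trivial.

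Once this is in hand, Lemma \ref{bd} applies (since $\pi$-numerically trivial is in particular $\pi$-nef) and produces
$$\Vol(X,x)=\vol_x(K_{X_{\pi}}+\Env(-K_X)_{\pi}+E).$$
Corollary \ref{numinv} then lets us strike out the numerically trivial term, so the right-hand side equals $\vol_x(K_{X_{\pi}}+E)=\vol(X,x)$. Combined with Theorem \ref{tcomp} this closes the argument.

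The main obstacle is verifying $\pi$-numerical triviality of $\Env(-K_X)_{\pi}$: two $\pi$-movable Cartier classes summing to zero need not individually be $\pi$-numerically trivial in full generality, and it is precisely the numerically Gorenstein hypothesis (via \cite[Def.2.24]{BdFF}) that supplies the extra rigidity. This is where the hypothesis does its work; without it the $\mathbb Q$-Gorenstein argument of \cite[Prop.5.3]{BdFF}, which exploits the genuine Cartier pullback $\pi^*K_X$, cannot be directly imitated.
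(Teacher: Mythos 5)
Your proposal is correct and follows essentially the same route as the paper: the numerically Gorenstein hypothesis makes $\Env(\pm K_X)_{\pi}$ $\pi$-numerically trivial (hence $\pi$-nef), so Lemma \ref{bd} applies and the numerical invariance of $\vol_x$ (Corollary \ref{numinv}) removes the envelope term. The appeal to Theorem \ref{tcomp} is harmless but redundant, since this chain already gives the equality directly.
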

\begin{proof}
The numerically Gorenstein condition implies that $\Env(\pm K_X)_{\pi}$ is $\pi-$numerically trivial on any non-singular model $X_{\pi}$.
We conclude using the numerical invariance of local volumes and Lemma \ref{bd}.
\end{proof}

\noindent As \cite[Thm.4.21]{BdFF} proves, the volume $\Vol(X,x)$ satisfies the same monotonicity property with respect to finite covers that $\vol(X,x)$ does:
\begin{rem} Let $f:(X,x)\to(Y,y)$ be a finite morphism of isolated singularities. Then $$\Vol(X,x)\geq(\deg f)\cdot\Vol(Y,y).$$\end{rem}

\begin{rem} Although $\vol(X,x)$ and $\Vol(X,x)$ are equal on surfaces and in the numerically Gorenstein case, they may differ in general. \cite[Exap.5.4]{BdFF} provides an example of a cone singularity where $\Vol(X,x)>\vol(X,x)=0$.\end{rem}

\noindent One advantage of $\vol(X,x)$ is that being determined on any log-resolution, it is usually easy to compute. On the other hand, since every resolution may bring new information, $\Vol(X,x)$ is usually hard to compute when it is non-zero. Lemma \ref{bd} provides examples when we can realize $\Vol(X,x)$ as a local volume $\vol_x$ on a fixed birational model. Applying this to cone singularities, we give an example of an irrational $\Vol(X,x)$.

\begin{lem}\label{cone3} Let $(V,H)$ be a polarized projective non-singular variety of dimension $n-1$, let $(X,0)$ be the associated cone singularity and let $\pi:(Y,E)\to(X,0)$ be the contraction of the zero section of $\Spec_{\mathcal O_V}\Sym^{\bullet}\mathcal O_V(H)$. Let $f:Y\to V$ be the vector bundle map. Then $$\Env(-K_X)_{\pi}=f^*(-K_V+M\cdot H),$$ with $M$ minimal such that $-K_V+M\cdot H$ is pseudo-effective.
\end{lem}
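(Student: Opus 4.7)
The plan is to apply the definition of envelopes from \cite[Def.~2.3]{BdFF}, which expresses $\Env(-K_X)_\pi$ as the asymptotic limit $\lim_{m \to \infty}\tfrac{1}{m}\Env_\pi(-mK_X)$ of its finite-level Cartier approximations on $Y$. The rigidity ${\rm Pic}(Y) = f^*{\rm Pic}(V)$ together with the linear equivalence $E \sim f^*(-H)$ recorded in Example~\ref{cone1} means that every Cartier divisor on $Y$ is represented up to linear equivalence by some pullback $f^*L$, so the problem reduces to a section count on the smooth projective base $V$.

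First I will identify, for each positive integer $m$, the integral envelope $\Env_\pi(-mK_X) = f^*(-mK_V + M_m H)$, where $M_m := \min\{k \in \mathbb Z : H^0(V, \mathcal O_V(-mK_V + kH)) \ne 0\}$. The key computation uses normality of $X$ together with $Y \setminus E \simeq X \setminus \{0\}$, the identity $K_Y = f^*(K_V + H)$ (which follows from the adjunction $K_Y|_E = K_V + H$ and $\Pic(Y) = f^*\Pic(V)$), and the projection formula $f_*\mathcal O_Y = \bigoplus_{l \ge 0}\mathcal O_V(lH)$ to obtain matching graded-module descriptions
\begin{gather*}
H^0(X, \mathcal O_X(-mK_X)) = \bigoplus_{j \in \mathbb Z} H^0(V, \mathcal O_V(-mK_V + jH)), \\
\pi_*\mathcal O_Y(f^*(-mK_V + cH)) = \bigoplus_{k \ge c} H^0(V, \mathcal O_V(-mK_V + kH)).
\end{gather*}
Equality of these graded modules forces $c \le M_m$, and among the pullback representatives the envelope picks out the maximal such $c$, namely $c = M_m$.

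Finally, I will show $\lim_m M_m/m = M$. Multiplying nonzero sections of $-mK_V + M_m H$ and $-m'K_V + M_{m'}H$ produces a nonzero section of $-(m+m')K_V + (M_m + M_{m'})H$, so the sequence $\{M_m\}$ is subadditive and Fekete's lemma gives $\lim_m M_m/m = \inf_m M_m/m$. This infimum equals the smallest rational $t$ for which $-K_V + tH$ admits an effective integral multiple, and since on the smooth projective $V$ any pseudo-effective class becomes big upon adding a small positive multiple of the ample class $H$, effective $\mathbb Q$-classes are dense in the pseudo-effective cone, so this rational infimum coincides with the pseudo-effective threshold $M$. Assembling the three steps yields $\Env(-K_X)_\pi = f^*(-K_V + MH)$. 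The principal obstacle is the first step: extracting from \cite[Def.~2.3]{BdFF} that $\Env_\pi(-mK_X)$ is characterized by maximality of $c$ among pullbacks $f^*L$ with matching pushforward is delicate since $X$ is typically not $\mathbb Q$-Gorenstein, so one must appeal directly to the sheaf-theoretic definition rather than to any cleaner $\mathbb Q$-Cartier shortcut.
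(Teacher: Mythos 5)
Your proposal is correct and follows essentially the same route as the paper: both compute the finite-level nef envelopes $Z(-mK_X)_\pi$ via the graded decomposition $H^0(Y,\mathcal O_Y(-mK_Y+tE))=\bigoplus_k H^0(V,\mathcal O_V(-mK_V+(-t-m+k)H))$, identify the threshold (your $M_m$ is the paper's $-t_m-m$), and pass to the limit to obtain the pseudo-effective threshold. The only differences are cosmetic — the paper pins down the finite-level envelope as $(\mathcal O_X(-mK_X)\cdot\mathcal O_Y)^{\vee\vee}$ using minimality of $t_m$ and irreducibility of $E$, which is equivalent to your pushforward-maximality characterization, while your Fekete/subadditivity argument just makes explicit the limit step the paper leaves implicit.
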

\begin{proof} Note that $\pi$ is a good resolution, hence $$\mathcal O_X(-mK_X)=\bigcup_{t\geq 0}\pi_*\mathcal O_Y(-mK_Y+tE)$$ and by coherence there exists minimal $t_m$ such that $$\mathcal O_X(-mK_X)=\pi_*\mathcal O_Y(-mK_Y+t_mE).$$ We get an induced inclusion that is actually an equality outside $E$: $$\mathcal O_X(-mK_X)\cdot\mathcal O_Y\to\mathcal O_Y(-mK_Y+t_mE).$$ Using the defining minimality property of $t_m$ and that $E$ is irreducible, one finds 
$$Z(-mK_X)_{\pi}=_{\rm def}(\mathcal O_X(-mK_X)\cdot\mathcal O_Y)^{\vee\vee}=\mathcal O_Y(-mK_Y+t_mE)$$
Observe that $X$ is affine, therefore the sheaves $\pi_*\mathcal O_Y(-mK_Y+tE)$ are determined by their global sections. But by the relations in Example \ref{cone1} and since $K_Y+E=f^*K_V$ by adjunction, 
$$H^0(Y,\mathcal O_Y(-mK_Y+tE))=\bigoplus_{k\geq 0}H^0(V,\mathcal O_V(-mK_V+(-t-m+k)H))$$ and it follows that $t_m$ is the maximal $t$ such that $\mathcal O_V(-mK_V+(-t-m)H)$ has sections. Recalling from \cite[Def.2.3]{BdFF} that $\Env(-K_X)=\lim_m(Z(-mK_X)/m)$ and setting $l=\lim_m(t_m/m)$, one finds that $$\Env(-K_X)_{\pi}=-K_Y+lE=f^*(-K_V-(l+1)H)$$ with $l$ maximal such that $-(K_V+(l+1)H)$ is pseudo-effective. Manifestly $M=-1-l$.
\end{proof}

\begin{cor}\label{cbd} With the same notation as before, assume that $\Env(-K_X)_{\pi}$ is also $\pi-$nef. Then 
$$\Vol(X,0)=\left\{\begin{array}{cc}M^n\cdot H^{n-1}& \mbox{, if }M\geq 0\\ 0&\mbox{, if } M<0\end{array}\right..$$\end{cor}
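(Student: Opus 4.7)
\begin{paragraph}{Proof plan.}
The plan is to combine Lemma \ref{bd} with the explicit description of $\Env(-K_X)_{\pi}$ from Lemma \ref{cone3} to reduce $\Vol(X,0)$ to an instance of the local volume computation for cone singularities already performed in Example \ref{cone1}.

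Since $\Env(-K_X)_{\pi}$ is assumed $\pi$-nef and $\pi:(Y,E)\to(X,0)$ is a good resolution (in particular a log-resolution), Lemma \ref{bd} applies and yields
\[
\Vol(X,0)=\vol_0\bigl(K_Y+\Env(-K_X)_{\pi}+E\bigr).
\]
Next I would plug in the identity $\Env(-K_X)_{\pi}=f^*(-K_V+M\cdot H)$ from Lemma \ref{cone3} together with the adjunction relation $K_Y+E=f^*K_V$ (recorded in Example \ref{cone1}). These give
\[
K_Y+\Env(-K_X)_{\pi}+E=f^*K_V+f^*(-K_V+M\cdot H)=f^*(M\cdot H),
\]
so that $\Vol(X,0)=\vol_0(f^*(M\cdot H))$.

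Now I would invoke the cone formula from Example \ref{cone1}, namely that for any divisor $L$ on $V$ one has
\[
\vol_0(f^*L)=n\cdot\int_0^{\infty}\vol(L-tH)\,dt,
\]
applied with $L=M\cdot H$. If $M\le 0$, then $(M-t)H$ is anti-ample (or zero) for every $t\ge 0$, so the integrand is identically zero and $\Vol(X,0)=0$. If $M>0$, then $(M-t)H$ is ample for $0\le t<M$ and not pseudo-effective for $t\ge M$; since $V$ has dimension $n-1$ and $(M-t)H$ is nef on $[0,M]$, its volume equals the top self-intersection $(M-t)^{n-1}H^{n-1}$. Therefore
\[
\vol_0(f^*(M\cdot H))=n\cdot\int_0^{M}(M-t)^{n-1}H^{n-1}\,dt=n\cdot H^{n-1}\cdot\frac{M^n}{n}=M^n\cdot H^{n-1},
\]
which is the claimed formula.

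There is no real obstacle: the corollary is essentially a substitution of Lemma \ref{cone3} into Lemma \ref{bd} followed by the integral identity of Example \ref{cone1}. The only point worth checking carefully is the case analysis on the sign of $M$, to make sure the integrand vanishes identically for $M\le 0$ and to confirm that for $M>0$ one may use top self-intersection on $V$ because $(M-t)H$ is ample in the relevant range.
\end{paragraph}
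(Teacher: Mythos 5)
Your proposal is correct and follows exactly the paper's own argument: apply Lemma \ref{bd} (valid since the contraction of the zero section is a good resolution and $\Env(-K_X)_{\pi}$ is assumed $\pi$-nef), substitute Lemma \ref{cone3} and the adjunction $K_Y+E=f^*K_V$ to reduce to $\vol_0(f^*(M\cdot H))$, and evaluate via the integral formula of Example \ref{cone1} using ampleness of $H$. Your case analysis on the sign of $M$ is slightly more explicit than the paper's (which just notes the negative case "follows similarly"), but the substance is identical.
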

\begin{proof} Since the negative case follows similarly, we assume $M>0$. By Lemma \ref{bd}, Example \ref{cone1}, the preceding result and from the ampleness of $H$,  $$\Vol(X,0)=\vol_0(K_Y+E+f^*(-K_V+M\cdot H))=\vol_0(f^*(M\cdot H))=n\cdot \int_0^{\infty}\vol(M\cdot H-tH)dt=$$ $$=M^n\vol(H)=M^n\cdot H^{n-1}.$$
\end{proof}

\begin{exap} As in Example \ref{cone2}, with notation as in the preceding lemma, let $\mathcal E$ be a general elliptic curve. Let $D$ and $L$ be integral ample divisors on $\mathcal E\times \mathcal E$, let $g:V\to \mathcal E\times \mathcal E$ be the double cover over a general section of $\mathcal O_{\mathcal E\times\mathcal E}(2D)$ and denote $H=g^*L$. Note that $K_V=g^*D$. Then $\Env(-K_X)_{\pi}$ is $\pi-$nef because its restriction to $E$, the only positive dimensional fiber, is isomorphic to $-K_V+M\cdot H$ which is pseudo-effective; and on $V$, nef and pseudo-effective are equivalent notions for pull-backs of divisors from $\mathcal E\times\mathcal E$ e.g from \cite[Rem.4.1.7]{L}. By the previous corollary, noting that $M$ is positive since $-K_V+M\cdot H=g^*(-D+M\cdot L)$, $$\Vol(X,0)=M^3H^2.$$ We find that $\Vol(X,0)$ can be irrational by producing an example of $D$ and $L$ where $M^3$ is irrational. The same construction as in Example \ref{irr1} works.
\end{exap}

\section{Questions}

In this section we present some questions that our work has left open.

\subsection{The volume of a normal isolated singularity}

\begin{paragraph}{1. A topological question.} In \cite{JW1}, it is shown that the volume $\vol(X,x)$ of a normal surface singularity is a characteristic number, which means that it is a topological invariant of the link of the singularity and that it satisfies the monotonicity property of Theorem \ref{t2}. The monotonicity property holds in arbitrary dimension and we have seen that $\vol(X,x)$ is generally not a topological invariant of the link.\end{paragraph}

\begin{quest}Let $(X,x)$ be a Gorenstein (or only $\mathbb Q-$Gorenstein, or only numerically Gorenstein as in Proposition \ref{numg}) complex normal algebraic isolated singularity of dimension $n$. Is $\vol(X,x)$ a topological invariant of the link of the singularity?\end{quest}
\noindent The question is also open for $\Vol(X,x)$ as described in \cite{BdFF}.

\begin{paragraph}{2. Boundedness from below.} As illustrated by the cone example \ref{cone2}, one expects a similarity between the theory of volumes of normal isolated singularities and that of volumes of non-singular projective varieties. In the projective setting, it is shown in \cite{Tsuji}, \cite{Ta}, \cite{HM} and improved in \cite{HMX} that the volume of varieties of general type of dimension $n$ is bounded from below by a positive constant depending only on $n$. For cone singularities, C. Hacon observed that the formula in Example \ref{cone2} shows that replacing the polarization by arbitrarily high multiples produces a sequence of volumes tending to zero. We can ask\end{paragraph}

\begin{quest} Given a positive $n\geq 2$, does there exists a constant $C(n)>0$ such that $\vol(X,x)\geq C(n)$ for any complex normal isolated Gorenstein singularity $(X,x)$ of dimension $n$ with positive $\vol(X,x)$?\end{quest}

\noindent The previous question has a positive answer in the surface case by \cite{Ga}. In arbitrary dimension, it is also open for $\Vol(X,x)$ according to \cite{BdFF}. Proving boundedness from below for $\vol(X,x)$ would imply it as well for $\Vol(X,x)$ by Theorem \ref{tcomp}.

\begin{paragraph}{3. An irrational $\mathbb Q-$Gorenstein example.} We have constructed irrational examples for both $\vol(X,x)$ and $\Vol(X,x)$ by working on cone singularities. One can check that $\mathbb Q-$Gorenstein cone singularities have rational volume. Our only other familiar  examples of non-zero volumes come from quasi-homogeneous singularities (see Example \ref{qh}) and from surfaces, but we have seen that all these have rational volume as well.\end{paragraph}

\begin{quest}Does there exist a $\mathbb Q-$Gorenstein normal isolated singularity whose volume is irrational? Note that in this case $\vol_x(X,x)=\Vol(X,x)$.\end{quest}

\subsection{Local multiplicities}
For a (fractional) ideal sheaf $\mathcal I$ on a variety $X$ of dimension $n$ with a distinguished point $x$, we have defined the local multiplicity $$\mult(\mathcal I)=_{\rm def}\limsup_{p\to\infty}\frac{\dim H^1_{\{x\}}(\mathcal I^p)}{p^n/n!}$$ that coincides with the Hilbert-Samuel multiplicity when $\mathcal I$ is $\mathfrak m-$primary, where $\mathfrak m$ is the maximal ideal corresponding to $x$. In the said $\mathfrak m-$primary case, it is known that the Hilbert-Samuel multiplicity can be computed as the negative of the top self-intersection of the Serre line bundle on the blow-up of $X$ along $\mathcal I$. 
\par By work in \cite{CHST}, there exist (non $\mathfrak m-$primary) ideal sheaves $\mathcal I$ whose local multiplicity is irrational, therefore we cannot expect a simple intersection theoretic interpretation for $\mult(\mathcal I)$.

\begin{quest} Is there an asymptotic intersection theoretic interpretation for $\mult(\mathcal I)$ when $\mathcal I$ is not necessarily $\mathfrak m-$primary?\end{quest}

Via Remark \ref{volbl}, in the language of local volumes, we can also ask:

\begin{quest} Let $\pi:X'\to X$ be a projective birational morphism onto a normal $n-$dimensional quasi-projective variety $X$ and let $x$ be a point on $X$. Given $D$ a $\pi-$nef divisor on $X$, is there an asymptotic intersection theoretic description for $\vol_x(D)$?\end{quest}

\begin{rem}The difficulty in these questions lies in that the support of the Serre line bundle on the blow-up of $\mathcal I$ and that of $D$ are often non-proper. Following ideas in \cite{LM}, when $D$ is actually $\pi-$ample and not just $\pi-nef$, small and non-canonical steps can be taken to avoid these difficulties by writing local volumes as differences of volumes on projective completions.
\par Let $\pi:X'\to X$ be projective birational with $X$ normal and quasi-projective, and take $x$ a point on $X$. We assume that $X'$ is non-singular. Let $-D=-A-B$ be a $\pi-$ample divisor on $X'$ with $B$ effective lying over $x$ and $A$ effective without components over $x$.
\par By picking a completion of $X$, we can assume it is projective. As in \cite[Thm. 3.8]{LM}, there exists $H$ sufficiently ample on $X$ such that $\pi^*H-D$ is ample on $X'$ and $$\vol_x(-D)=\vol(\pi^*H-A)-\vol(\pi^*H-D).$$
Since $\pi^*H-D$ is ample, $\vol(\pi^*H-D)=(\pi^*H-D)^n$. Unfortunately, $\vol(\pi^*H-A)$ is not as accessible to intersection theoretic interpretations and the local nature of $\vol_x$ is not visible in this picture. 
\end{rem}

\begin{rem}With the notation in the previous question, when $(X,x)$ is a normal isolated singularity and $D$ is a $\pi-$nef Cartier divisor lying over $x$, by \cite[Rem.4.17]{BdFF}, $$\vol_x(D)=-D^n.$$\end{rem} 

\subsection{Local volumes}
\begin{paragraph}{1. Fujita approximation.} We proved Theorem \ref{Fujita} for a particular class of divisors, for which we were able to carry the arguments in \cite{LM}.\end{paragraph}
\begin{quest}Let $\pi:X'\to X$ be a projective birational morphism onto a normal quasi-projective variety $X$ of dimension $n\geq 2$ with a distinguished point $x$. Let $D$ be a Cartier divisor on $X'$. With notation as in \ref{SFuj}, is it true that $$\vol_x(D)=\lim_{p\to\infty}\frac{\mult(\pi_*\mathcal O_{X'}(pD))}{p^n}?$$\end{quest} 

\begin{paragraph}{2. Geometric interpretation.} The volume of a big divisor on a projective variety can be computed as an asymptotic moving self-intersection number (see \cite[Thm.11.4.11]{L}), generalizing that volumes of big and nef divisors are computed as top self-intersection numbers. In the local setting, we ask the following vague question:\end{paragraph}

\begin{quest} Does there exist a notion of "moving intersection numbers" that, asymptotically, gives a geometric interpretation to $\vol_x(D)$ when $D$ is a Cartier divisor on a birational modification of a normal quasi-projective variety $X$ with a distinguished point $x$?\end{quest}

\begin{rem}For divisors lying over $x$ on modifications $\pi:X'\to X$ of a normal isolated singularity $(X,x)$ of dimension $n$, we have by \cite[Thm.4.17]{BdFF} that $$\vol_x(D)=-(\Env(D))^n.$$ In this case, a good notion of "moving self-intersection number" for the divisor $D$ is, following the definition in \cite[Ex.1.3]{BdFF}, $$-(Z(\pi_*\mathcal O_{X'}(D)))^n.$$\end{rem}

\par As hinted by the previous remark, as well as the proof of \cite[Thm.11.4.11]{L}, it is expected that the "moving self-intersection number" of $D$ depends only on the Serre line bundle on the blow-up of $X$ along $\pi_*\mathcal O_{X'}(D)$. The difficulty is that the support of the latter is often non-proper.

\begin{paragraph}{3. The behavior of $\vol_x$ on $N^1(X'/X)_{\mathbb R}$.} When $\vol$ is the volume function on $N^1(X)_{\mathbb R}$ for $X$ a projective variety, we know by work in \cite[Ch.2]{L} that the locus where the volume does not vanish is the open convex cone of  big divisors.\end{paragraph} 

\begin{quest}Given a projective birational map $\pi:X'\to X$ onto normal quasi-projective $X$ with a distinguished point $x$, study the vanishing of $\vol_x$ on $N^1(X'/X)_{\mathbb R}$, or on the space of exceptional divisors ${\rm Exc}(\pi)$.\end{quest}

\noindent\textsc{Department of Mathematics, University of Michigan, Ann Arbor, MI 48109, USA
\\ Institute of Mathematics of the Romanian Academy, P. O. Box 1-764, RO-014700,
Bucharest, Romania}
\vskip.5cm
\textsc{E-mail:} mfulger@umich.edu

\end{document}